\documentclass[american]{scrartcl}
\usepackage{babel}
\usepackage[utf8]{inputenc}

\usepackage{enumitem}

\usepackage{amssymb,amsmath,amsthm,bm,mathtools} 
\usepackage{mleftright}

\usepackage{graphicx,subcaption}
\usepackage{xcolor}
\usepackage{pgfplots}
\pgfplotsset{compat=1.16}
\graphicspath{{matlab/}}

\usepackage{csquotes}
\usepackage[style=ext-numeric, maxbibnames=9, giveninits=true
  ]{biblatex}
\addbibresource{main.bib}

\DeclareFieldFormat[article,inbook,incollection,inproceedings]{titlecase:title}{\MakeSentenceCase{#1}} 
\AtEveryBibitem{\clearfield{url}}
\AtEveryBibitem{\clearfield{month}}
\renewbibmacro{in:}{%
  \ifentrytype{article}{}{\printtext{\bibstring{in}\printunit{\intitlepunct}}}}

\usepackage[english,pdfusetitle,colorlinks]{hyperref}
\usepackage[english,compress]{cleveref}

\newtheorem{thm}{Theorem}[section]
\newtheorem{cor}[thm]{Corollary}
\newtheorem{lem}[thm]{Lemma}
\newtheorem{prop}[thm]{Proposition}

\theoremstyle{definition}
\newtheorem{defin}[thm]{Definition}
\Crefname{defin}{Definition}{Definitions}
\Crefname{rem}{Remark}{Remarks}
\Crefname{thm}{Theorem}{Theorems}
\Crefname{cor}{Corollary}{Corollaries}
\Crefname{prop}{Proposition}{Propositions}
\crefname{defin}{def.}{def.}
\crefformat{defin}{def.~#2#1#3}
\Crefformat{defin}{Definition~#2#1#3}
\Crefname{lem}{Lemma}{Lemmas}
\newtheorem{rem}[thm]{Remark}
\newtheorem*{rem*}{Remark}

\newcommand\restr[2]{{
    \left.\kern-\nulldelimiterspace 
    #1 
    \vphantom{\big|} 
    \right|_{#2} 
}}

\usepackage{verbatim}

\newcommand{\N}{\ensuremath{\mathbb{N}}}
\newcommand{\NN}{\ensuremath{\mathbb{N}_0}}
\newcommand{\T}{\ensuremath{\mathbb{T}}}
\renewcommand{\S}{\ensuremath{\mathbb{S}}}
\newcommand{\M}{\ensuremath{\mathbb{M}}}
\newcommand{\Z}{\ensuremath{\mathbb{Z}}}

\newcommand{\R}{\ensuremath{\mathbb{R}}}
\newcommand{\B}{\ensuremath{\mathbb{B}}}
\newcommand{\C}{\ensuremath{\mathbb{C}}}
\newcommand{\D}{\ensuremath{\mathbb{D}}}

\newcommand{\SO}{\ensuremath{\mathrm{SO}(3)}}

\newcommand{\abs}[1]{\ensuremath{\left\vert#1\right\vert}}
\newcommand{\inn}[1]{\ensuremath{\left\langle#1\right\rangle}}

\newcommand{\e}{\mathrm{e}}
\newcommand{\im}{\mathrm{i}}
\renewcommand{\i}{\mathrm{i}}

\renewcommand{\d}{\, \mathrm{d}}

\newcommand{\norm}[1]{\left\lVert #1
\right\rVert}
\newcommand{\pare}[1]{\left( #1
\right)}

\DeclareMathOperator{\sgn}{sgn}
\newcommand*\centermathcell[1]{\omit\hfil$\displaystyle#1$\hfil\ignorespaces}

\hypersetup{
	pdftitle={A double Fourier sphere method for d-dimensional manifolds},
    pdfauthor={
    	Sophie Mildenberger, mildenberger@tu-berlin.de; 
    	Michael Quellmalz, quellmalz@math.tu-berlin.de
    }
}

\begin{document}
    
\title{A double Fourier sphere method for $d$-dimensional manifolds}
\date{}
\author{
	Sophie Mildenberger\thanks{TU Berlin, Institute of
    Mathematics, MA 4-3, Straße des 17. Juni 136, D-10623 Berlin, Germany. Orcid: \href{https://orcid.org/0000-0002-2509-3114}{[0000-0002-2509-3114]}, \href{https://orcid.org/0000-0001-6206-5705}{[0000-0001-6206-5705]}}\\
	{\footnotesize\href{mailto:mildenberger@tu-berlin.de}{mildenberger@tu-berlin.de}}
	\and 
	Michael Quellmalz\footnotemark[1]\\
	{\footnotesize\href{mailto:quellmalz@math.tu-berlin.de}{quellmalz@math.tu-berlin.de}
}
}
    
\maketitle

\begin{abstract}
	The double Fourier sphere (DFS) method uses a clever trick to transform a function defined on the unit sphere to the torus and subsequently approximate it by a Fourier series, which can be evaluated efficiently via fast Fourier transforms. Similar approaches have emerged for approximation problems on the disk, the ball, and the cylinder. In this paper, we introduce a generalized DFS method applicable to various manifolds, including all the above-mentioned cases and many more, such as the rotation group. This approach consists in transforming a function defined on a manifold to the torus of the same dimension. We show that the Fourier series of the transformed function can be transferred back to the manifold, where it converges uniformly to the original function. In particular, we obtain analytic convergence rates in case of Hölder-continuous functions on the manifold.

  \medskip
  \noindent
  \textit{Math Subject Classifications.}
  41A65, 
  42B05,  
  42C10,  
  65T50  
  
  \medskip
  \noindent
  \textit{Keywords.} Double Fourier sphere method, approximation on manifolds, Fourier series.
  
\end{abstract}
\section{Introduction}

\paragraph{Approximation on manifolds}
Throughout the mathematical literature, there is considerable interest in approximating functions defined on manifolds, cf.\ e.g., \cite{Dem21,FusWri12,Mai20,SobAizLev21}. The problem of representing and numerically manipulating such functions arises in various areas as application including weather prediction \cite{Che00,Coi11,Mer73}, protein docking \cite{PadKazZer16}, active fluids in biology \cite{BouSloTow21}, geosciences \cite{Ger14,HiPoQu18}, and astrophysics \cite{God97,Pri81}.

One method to tackle this problem is atlas-based; smooth charts relate local patches on the manifold to  Euclidean space, where well-known approximation theory can be applied. The local approximations are subsequently combined to a global approximation. In case of overlapping patches, one is faced with the issue of suitably blending them. Doing so might require the solution of non-linear equations \cite{SobAizLev21}, incorporating tangent space projections \cite{DavSch07,Dem21}, or partition-of-unity methods \cite{MajCir17}. With non-overlapping patches, e.g.\ in computer-aided design, non-linear smoothness conditions or complicated patch-stitching methods \cite{CheCheSub21} might be required.

Another approach to approximating functions on manifolds is the ambient approximation method, cf.\ e.g., \cite{Mai20,Oda16}. Here, a function on an embedded submanifold is extended into some subset of the ambient space, usually a tubular neighborhood of the manifold \cite[§~3.1]{Mai20}. The extended function can then be approximated in $\R^d$ and restricted to the submanifold again. By Whitney's embedding theorem \cite[thm.~6.15]{Lee12}, any manifold can be embedded into $\R^d$ for sufficiently large $d$, thus this approach is applicable to general manifolds. However, the ambient approximation method entails the solution of an approximation problem in a space of possibly higher dimension.

One advantage of both of the previously mentioned methods is their generality; they are applicable to any manifold, even if there is little information on the underlying geometry, see, e.g., \cite{Oda16,SobAizLev21}. However, often the manifold of interest is actually well-known and relatively simple. In such situations, approximation bases distinctly tailored to the manifold can be used instead. As a prominent example, the spherical harmonics \cite{Mic13} are well-suited for various applications
and allow for an efficient computation \cite{DriHea94,KunPot03,Moh99}. However, these algorithms show difficulties in the numerical evaluation of associated Legendre functions, cf.\ \cite{Sch13}, and their performance does not reach that of the fast Fourier transform (FFT) on the torus, cf.\ \cite[ch.~5~\&~7]{PloPotSteTas18}.

\paragraph{DFS methods}
The \emph{double Fourier sphere} (DFS) method represents functions defined on the sphere by transforming them to a torus and subsequently considering the two-dimensional Fourier series of the transformed functions. Thus, one can take advantage of the efficiency of the FFT to approximate spherical functions. The classical DFS method was originated in 1972 by Merilees \cite{Mer73} and found various applications since, e.g., \cite{Boy78,Che00,DraWri20,ForMer97,MonNak18,Ors74,PotVan17,TowWilWri16,Yee80,YinLiFen22}. Recently, we have shown analytic approximation properties of the classical DFS method \cite{MilQue22}.
Further DFS methods have been invented for other geometries such as the disk \cite{WilTowWri17}, the cylinder \cite{ForTow20}, the ball \cite{BouTow20}, and two-dimensional axisymmetric surfaces \cite{MilForMurGreShv22}. The software \emph{Chebfun} \cite{Chebfun} uses DFS methods for computing with functions and solving differential equations on the sphere, the disk, and the ball.

In this paper, we introduce a unified approach to all these DFS methods and show its analytic convergence properties.
To this end, we define a generalized DFS method that contains as special cases all the above-mentioned manifolds.
Starting with a function $f\colon \M\to\C$ on a $d$-dimensional submanifold $\M$ of $\R^{d'}$,
the fundamental idea is to apply a coordinate transformation $\phi\colon\T^d\to\M$ with certain smoothness and symmetry properties to obtain a ($2\pi$-periodic) function $\tilde f = f\circ\phi $ defined on the $d$-dimensional torus 
\begin{equation*}
	\T^d\coloneqq \R^d /(2 \pi \Z^d).
\end{equation*} 
In particular,
we construct a one-to-one connection between $f$ and the transformed function $\tilde f \colon \T^d\to\C$, which admits certain symmetries and can be represented by a $d$-dimensional Fourier series. 

Due to the imposed symmetry properties of $\phi$, we can relate linear combinations of the Fourier basis $\e^{\im \inn{\bm{n},\cdot}}$, $\bm{n}\in\Z^d$, on the torus $\T^d$ to an orthogonal basis on the manifold~$\M$. Thus, we can approximate the original function $f$ by a series expansion on $\M$ that is based on the Fourier series of $\tilde f$ on the torus. 
Therefore, the numerical computation and evaluation of this series expansion can be performed efficiently by employing the FFT. 

We prove that our generalized DFS method transfers certain smoothness classes on the manifold $\M$ to the respective ones on the torus. Accordingly, the DFS series representation of a Hölder-continuous function on $\M$ exhibits convergence rates comparable to those of Fourier series of functions in the corresponding Hölder space on the torus $\T^d$.
We derive explicit upper bounds on the respective constants, depending on the smoothness class of the function $f$, the dimension $d$ of the manifold $\M$, and the dimension $d'$ of its ambient space.

The generalized DFS method combines aspects of the three approximation approaches mentioned above: While not being atlas-based, it does depend on transforming functions from a manifold to a subset of the Euclidean space, where the well-known theory of Fourier series can be applied. 
Our proof of the approximation rates relies on an extension of $f$ into the ambient space $\R^{d'}$, 
but the DFS method itself does not require the construction of such extensions.
To avoid the complications of combining various patches, the method instead depends on a transformation that does not necessarily capture the topology of the manifold properly. As a consequence, the basis functions on the manifold might be non-smooth on a set of measure zero,
such as the poles of the sphere. However, the method still ensures fast uniform convergence of the respective series expansion when used to approximate sufficiently smooth functions.

\paragraph{Outline of this paper} 
In \Cref{sec:hoel}, we define Hölder spaces and related function spaces on the torus and embedded submanifolds. In \Cref{sec:dfs}, we introduce the generalized DFS method and present some of its basic properties. \Cref{sec:smoothness} is dedicated to proving that the generalized DFS method preserves Hölder spaces and to providing upper bounds on the corresponding semi-norms. In \Cref{sec:series}, we develop the series approximation of functions on a manifold via the DFS method. \Cref{sec:Fourier} is concerned with the Fourier series of DFS functions and the corresponding series on the manifold, for which we need to incorporate considerations on the symmetry properties of DFS functions.
In \Cref{sec:convergence}, we study sufficient conditions for the absolute convergence of the previously introduced series and show bounds on the speed of convergence. \Cref{sec:examples} considers various manifolds to which the DFS method can be applied. Besides the well-known cases of the disk, the sphere, the cylinder, and the ball,
we also consider the rotation group, higher-dimensional spheres and balls, as well as products of manifolds that themselves admit  DFS methods. 
\section{Function spaces on embedded manifolds and on the torus}\label{sec:hoel}
In this section, we give an overview of the notation used throughout the paper. 
Let $d,d^\prime \in \N$. We write $[d]\coloneqq\{1,2,...,d\}$. For $\bm x \in \C^d$, we denote by
\begin{equation*}
	\abs{\bm x} \coloneqq \sum_{j=1}^d \abs{x_j} \text{ and } \norm{\bm x}\coloneqq \sqrt{\sum_{j=1}^d \abs{x_j}^2}
\end{equation*}
the $1$-norm and the Euclidean norm, respectively.  For $k \in \NN$, we set 
\begin{equation*}
    B_k^d \coloneqq \{\bm{\beta} \in \NN^d \mid \abs{\bm{\beta}} \le k\}.
\end{equation*}

\begin{defin}[Function spaces in $\R^d$] \label{defin:hoel_R}
	Let $k \in \NN$ and  let $U \subset \R^d$ and $V \subset \C^{d^\prime}$, where $U$ is assumed to be open if $k>0$. We define the differentiability space of order $k$ by
	\begin{equation*}
		\mathcal{C}^k(U,V)\coloneqq \mleft\{f \colon U \to V \;\middle|\;  \begin{alignedat}{1}& \mathrm{D}^{\bm \beta} f \text{ exists and is bounded}\\ &\text{and continuous for all }\bm \beta \in B^d_k \end{alignedat}\mright\}
	\end{equation*}
	with the norm
	\begin{equation*}
		\norm{f}_{\mathcal{C}^k(U,V)} \coloneqq \max_{\bm \beta \in B^d_k} \sup_{\bm x \in U} \big\lVert D^{\bm \beta}f(\bm x)\big\rVert.
	\end{equation*}
	For $0<\alpha<1$, the $(k,\alpha)$-Hölder space is
	\begin{equation*}
		\mathcal{C}^{k,\alpha}(U,V) \coloneqq \mleft\{ f \in \mathcal{C}^k(U,V)\;\middle|\; \abs{f}_{\mathcal{C}^{k,\alpha}(U,V)}\coloneqq \sup_{\substack{\bm x,\bm y \in U,\, \bm x \not=\bm y\\\bm \beta \in B^d_k,\, \abs{\bm \beta}=k}}\frac{\norm{\mathrm{D}^{\bm \beta}f(\bm x)-\mathrm{D}^{\bm \beta}f(\bm y)}}{\norm{\bm x-\bm y}^\alpha}<\infty\mright\}
	\end{equation*}
	with the norm
	\begin{equation*}
		\norm{f}_{\mathcal{C}^{k,\alpha}(U,V)}\coloneqq \norm{f}_{\mathcal{C}^k(U,V)}+\abs{f}_{\mathcal{C}^{k,\alpha}(U,V)}.
	\end{equation*}
	Finally, we set the Lipschitz space of order $k$
	\begin{equation*}
		\mathrm{Lip}^k(U,V)\coloneqq \mleft\{ f \in \mathcal{C}^k(U,V)\;\middle|\; \abs{f}_{\mathrm{Lip}^k(U,V)}\coloneqq \sup_{\substack{\bm x, \bm y \in U,\,\bm x \not=\bm y\\\bm \beta \in B^d_k,\,\abs{\bm \beta}=k}}\frac{\norm{\mathrm{D}^{\bm \beta}f(\bm x)-\mathrm{D}^{\bm \beta}f(\bm y)}}{\norm{\bm x-\bm y}}<\infty\mright\}
	\end{equation*}
	with the norm
	\begin{equation*}
		\norm{f}_{\mathrm{Lip}^k(U,V)}\coloneqq \norm{f}_{\mathcal{C}^k(U,V)}+\abs{f}_{\mathrm{Lip}^k(U,V)}.
	\end{equation*}
	All three spaces, equipped with the given norms, are Banach spaces. We denote the space of smooth functions with bounded partial derivatives by
	\begin{equation*}
		\mathcal{C}^\infty(U,V) \coloneqq \bigcap_{k \in \NN} \mathcal{C}^k(U,V).
	\end{equation*}
\end{defin}

The corresponding function spaces on the torus $\T^d= \R^d /(2 \pi \Z^d)$ are obtained by restricting the function spaces on $\R^d$ to $2\pi$-periodic functions:

\begin{defin}[Function spaces on $\T^d$]
	Let $\mathcal{X}$ be any of the function spaces $\mathcal{C}^k$, $\mathcal{C}^{k,\alpha}$, $\mathrm{Lip}^k$ or $\mathcal{C}^\infty$ from \Cref{defin:hoel_R}, and let $V \subset \C^{d^\prime}$.
  We define the respective function space on the torus $\T^d$ by
  \begin{equation*}
    \mathcal{X}(\T^d,V)\coloneqq \{f \in \mathcal{X}(\R^d,V) \mid f(2\pi \bm{e}^j+\cdot)=f(\cdot) \text{ for all } j\in [d]\},
  \end{equation*}
  where $\bm e^j$ denotes the $j$-th unit vector for $j \in [d]$.
\end{defin}

When considering scalar-valued functions in either of the previous definitions, i.e., when we have $V=\C$, then $V$ is usually omitted.

\begin{defin}[Function spaces on embedded manifolds]\label{defin:manifold}
	Let $\M \subset \R^{d^\prime}$ be a smooth embedded submanifold with or without corners. For $f \colon \M \to \C$, we call $f^\ast \colon U \to \C$ an extension of $f$ if $U$ is an open set in $\R^{d^\prime}$ with $\M \subset U \subset \R^{d^\prime}$ and we have $\restr{f^\ast}{\M}=f$. For $k \in \NN$, we call $f^\ast$ a $\mathcal{C}^k$-extension of $f$, if it is an extension of $f$ and $f^\ast \in \mathcal{C}^k(U)$. Then, the $\mathcal{C}^k$-extension seminorm of $f^\ast$ is
	\begin{equation}\label{eq:C_semi_manifold}
		\abs{f^\ast}_{\mathcal{C}^k(\M)}^\ast \coloneqq \sup_{\bm \beta \in B^{d^\prime}_k} \big\lVert\mathrm{D}^{\bm \beta}f^\ast \big\rVert_{\mathcal{C}(\M)}.
	\end{equation}
	We define the differentiability space of order $k$ on $\M$ by
	\begin{equation}\label{eq:C_norm_manifold}
		\mathcal{C}^k(\M) \coloneqq \mleft\{f \colon \M \to \C \;\middle|\; \norm{f}_{\mathcal{C}^k(\M)}\coloneqq \inf_{\substack{\mathcal{C}^k\text{-extensions}\\ f^\ast \text{ of }f}} \abs{f^\ast}_{\mathcal{C}^k(\M)}^\ast<\infty\mright\}
	\end{equation}
	and write $\mathcal{C}^\infty(\M)\coloneqq \bigcap_{k \in \NN} \mathcal{C}^k(\M)$.
	
	Analogously, for $0<\alpha<1$, we call $f^\ast$ a $\mathcal{C}^{k,\alpha}$-extension of $f$, if it is an extension of $f$ and $ f^\ast \in \mathcal{C}^{k,\alpha}(U)$. The $\mathcal{C}^{k,\alpha}$-extension seminorm is then given by
	\begin{equation}\label{eq:H_semi_manifold}
		\abs{f^\ast}_{\mathcal{C}^{k,\alpha}(\M)}^\ast \coloneqq \abs{f^\ast}_{\mathcal{C}^k(\M)}^\ast+\sup_{\substack{\bm \xi,\bm \eta \in \M, \, \bm \xi \not=\bm \eta\\\bm \beta \in B^{d^\prime}_k, \,\abs{\bm \beta}=k}}\frac{\norm{\mathrm{D}^{\bm \beta}f^\ast (\bm \xi)-\mathrm{D}^{\bm \beta}f^\ast(\bm \eta)}}{\norm{\bm \xi-\bm \eta}^\alpha}.
	\end{equation}
	Finally, we define the $(k,\alpha)$-Hölder space on $\M$ by
	\begin{equation}\label{eq:H_norm_manifold}
		\mathcal{C}^{k,\alpha}(\M) \coloneqq \mleft\{f \colon \M \to \C \;\middle|\; \norm{f}_{\mathcal{C}^{k,\alpha}(\M)}\coloneqq \inf_{\substack{\mathcal{C}^{k,\alpha}\text{-extensions}\\ f^\ast \text{ of }f}} \abs{f^\ast}_{\mathcal{C}^{k,\alpha}(\M)}^\ast<\infty\mright\}.
	\end{equation}
\end{defin}
\section{The generalized DFS method}\label{sec:dfs}

We define a generalization of the classical DFS method to other manifolds in a way that covers generalizations from the literature, such as ball and cylinder, and yields results analogous to those we presented in \cite{MilQue22} for the sphere. 

The classical double Fourier sphere (DFS) method transforms a function defined on the sphere 
$
	\S^2 \coloneqq \{\bm x \in \R^3 \mid \norm{\bm x}=1\}
$
to the torus $\T^2$ and subsequently represents it via a Fourier series. Thereby, a function $f \colon \S^2 \to \C$ is concatenated with the {DFS coordinate transform}
\begin{equation*}
	\phi_{\S^2} \colon \T^2 \to \S^2, \, (x_1,x_2) \mapsto (\cos x_1 \sin x_2,\sin x_1\sin x_2,\cos x_2),
\end{equation*}
which covers the sphere twice. The transform $\phi_{\S^2}$ is smooth, and the transformed function $f \circ \phi_{\S^2}$ has a convergent Fourier series for sufficiently smooth $f$, see \cite{MilQue22}. Furthermore, we have $\phi_{\S^2}(x_1,x_2)=\phi_{\S^2}(x_1+\pi,-x_2)$ for $(x_1,x_2) \in \T^2$, so that the transformed function is block-mirror-centro\-symmetric (BMC), cf.\ \cite[§~2.2]{TowWilWri16}, as illustrated in \Cref{fig:DFS}.
\begin{figure}[ht]
	\hspace{1.5cm}
  \includegraphics[height=12em]{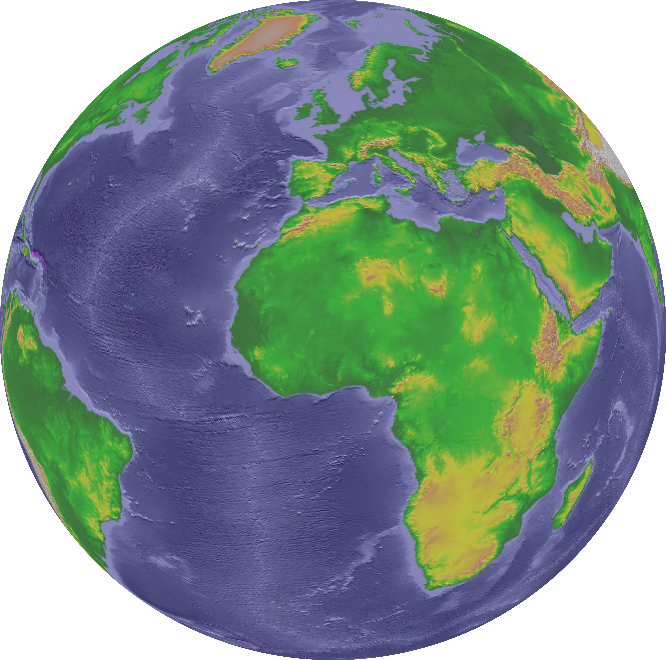}
  \hspace{1.25cm}
  \includegraphics[height=12em]{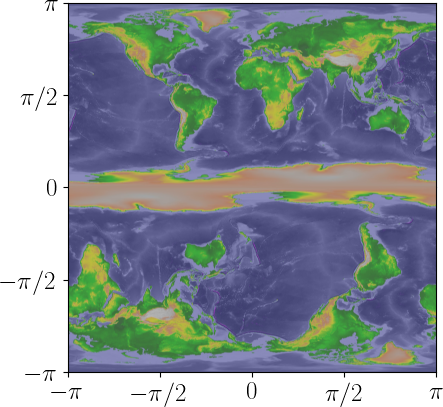}
  \caption{Left: Topographic data $f(\bm\xi)$ of the earth. Right: DFS function $f\circ\phi_{\S^2}(\bm x)$.}
    \label{fig:DFS}
\end{figure}

Because the restriction of $\phi_{\S^2}$ to $(-\pi,\pi]\times (0,\pi) \cup \{(0,0),(0,\pi)\}$ is bijective, there is a one-to-one connection between BMC-functions on the torus and functions on the sphere without its poles. This makes it possible to relate the Fourier expansion of a transformed function $f \circ \phi_{\S^2}$ to a series expansion of $f$ defined directly on the sphere, see \cite{MilQue22}.

The core concept of our generalization of this method is to transform a function defined on a $d$-dimensional manifold to a function on the torus $\T^d$. The transformed function can then be represented via a Fourier series, which allows for fast numerical computations by the FFT on the torus. To ensure similar properties as in the classical case, we impose smoothness and symmetry assumptions on the transform.

\begin{defin}\label{defin:DFS}
    Let $d,{d^\prime} \in \N$ and let $\M \subset \R^{d^\prime}$ be a $d$-dimensional smooth embedded submanifold with or without corners. We call a surjective function     
    \begin{equation*}
        \phi \colon \T^d \to \M 
    \end{equation*}
    a \emph{generalized DFS transform} of $\M$ if it fulfills the following smoothness and symmetry assumptions:
    We say $\phi$ has the \emph{smoothness properties} of a DFS transform if $\phi \in \mathcal{C}^\infty(\T^d)$ and for all $\bm{\mu}\in \NN^d$ and $l \in [{d^\prime}]$, it holds that
    \begin{equation}\label{eq:DFSinf}
        \norm{\mathrm D^{\bm{\mu}} \phi_l}_{\mathcal{C}(\T^d)} \le 1.
    \end{equation}
    
    We say that $\phi$ has the \emph{symmetry properties} of a DFS transform if, firstly, for some integer $p \in \NN$, shift vectors $\mathrm S^i \in \{0,\pi\}^d, \, i \in [p]$, and reflection maps
    \begin{equation}\label{eq:Mi} 
    	\mathrm M^i \colon \T^d \to \T^d, \, \mathrm M^i(\bm x) \coloneqq \mathrm M^i \bm x, \qquad i \in [p],
	\end{equation}
	associated to some diagonal matrices $\mathrm M^i\in\Z^{d\times d}$ with diagonal entries in $\{-1,1\}$, the map $\phi$ is invariant under the \emph{symmetry functions} 
	\begin{equation}\label{eq:si}
		s^i \colon \T^d \to \T^d, \, s^i(\bm x) \coloneqq \mathrm S^i+ \mathrm M^i(\bm x), \qquad i \in [p],	\end{equation}
		i.e., $\phi\circ s^I = \phi$ for all $I\subset[p]$, where the repeated composition of functions is written as 
    $$
    s^{\{i_1, \, i_2,\,...\}}\coloneqq
    \circ_{i \in  \{i_1,\,i_2,\,...\} } s^{i} \coloneqq s^{i_1}\circ \pare{s^{i_2} \circ ...}, \qquad  \{i_1,\, i_2,\,...\} \subset [p],
    $$ 
    with the convention that the empty composition is the identity.
	Secondly, for the symmetry properties to be satisfied, there must exist a rectangular set $D \subset \T^d$ of representatives of $\T^d/_{\sim}$, where $\sim$ is the equivalence relation $\bm x \sim \bm y \iff \bm y \in \{s^I(\bm x) \mid I \subset  [p]\}$, and disjoint measurable subsets $D_1, D_2 \subset D$ such that
	\begin{enumerate}[label=(\roman*)]
		\item $D^\circ=D_1^\circ$ and $\phi[D_2]$ is closed, \label{Prop1}
		\item the restriction $\restr{\phi}{D_1 \dot{\cup} D_2} \colon D_1 \dot{\cup} D_2 \to \M$ is a bijection on the disjoint union $D_1 \dot{\cup} D_2$,\label{Prop2}
		\item the inverse $\big(\restr{\phi}{D_1}\big)^{-1}\colon \M \setminus \phi[D_2] \to D_1$ is continuous, and \label{Prop3}
		\item the Jacobian $\nabla \phi(\bm x)$ has full rank for all $\bm x \in D_1^\circ$.  \label{Prop4}
    \end{enumerate}
    The set $D$ being ``rectangular'' is to be understood as it being the Cartesian product of connected subsets of $\T^1$, i.e., it can be identified with a rectangle in $\R^d$.
    We always assume $p$ to be chosen minimally and call it the \emph{symmetry number} of $\phi$.
    
    For $f \colon \M \to \C$, we define the \emph{generalized DFS function} (with respect to $\phi$) of $f$ by
    \begin{equation*}
    	\tilde f \colon \T^d \to \M, \, \tilde f \coloneqq f \circ \phi.
    \end{equation*}
\end{defin}

The matrix--vector product \eqref{eq:Mi} of $\mathrm M^i$ and $\bm x \in \T^d$ can be performed with any representative of $\bm x$ in $\R^d$ since the matrix $\mathrm M^i$ only has integer entries.
The addition of shifts $\mathrm{S}^i$ and the reflections $\mathrm{M}^i$ are, as functions on the torus, self-inverse and commute, so we do not need to consider the order of compositions. We have $s^I(\cdot)=\sum_{i \in I}\mathrm S^i+\mathrm M^I(\cdot)$, and $\sim$ as defined above is indeed an equivalence relation on $\T^d$. Because $D$ is a set of representatives of $\T^d/_\sim$, properties \ref{Prop1} and \ref{Prop2} together with the invariance assumption on $\phi$  imply that the symmetry functions $s^i$, $i \in [p]$, are unique up to compositions.

\begin{rem}
We consider the class of smooth submanifolds with corners because the finite Cartesian product of smooth manifolds with corners is again a smooth manifold with corners, whereas the same is not true for smooth manifolds with boundary, as their product might lack a smooth structure in the right sense, cf.\ \cite[p.~29]{Lee12}. Thus, choosing this class of manifolds allows us to generate DFS methods on product manifolds, such as the cylinder, in \Cref{sec:product}. As the set of corner points or the boundary of such manifold might be empty, cf.\ \cite[p.~26 \& p.~417]{Lee12}, we usually write ``with or without corners''.
\end{rem}

\begin{rem}
The bound in \eqref{eq:DFSinf} is somewhat arbitrary and arises from the specific applications. We could instead allow for any uniform bound on the partial derivatives, i.e., consider $\tilde{\phi} \in \mathcal{C}^\infty(\T^d)$ with $\tilde{\phi}[\T^d]=\M$ and some $C>0$ such that
	$
		\norm{\smash{\mathrm D^{\bm{\mu}} \tilde\phi_l}}_{\mathcal{C}(\T^d)} \le C
  $ 
  for all $\bm{\mu} \in \NN^d$ and $l \in [{d^\prime}]$. 
  The results in this paper can then be applied to a function $f \colon \M \to \C$ by transforming it to the scaled manifold $C^{-1} \M$.
\end{rem}

The next lemma provides some basic properties of generalized DFS transforms: As $\phi$ is surjective, a right inverse always exists and can be chosen canonically by \ref{Prop2} in \Cref{defin:DFS}. This inverse has certain regularity properties due to \ref{Prop3} and \ref{Prop4}.

\begin{lem}\label{lem:even}
	Let $\M \subset \R^{d^\prime}$ be a smooth embedded submanifold with or without corners that has generalized DFS transform $\phi \colon \T^d \to \M$. Let $p$, $D$, $D_1$, $D_2$, and $s^i$, $i \in [p]$, be as in \Cref{defin:DFS}. Then $\phi$ evenly covers $\phi[D_1^\circ]$ in the sense that 
	\begin{equation}\label{eq:even}
		s^I(D_1^\circ) \cap s^J(D_1^\circ)=\emptyset,\qquad I, \, J \subset [p] \text{ with } I\not=J.
	\end{equation} 
	Furthermore, $\phi[D_2]$ and $\M \setminus \phi[D_1^\circ]$ have measure zero in $\M$ and $\T^d \setminus \big(\bigcup_{I \subset [p]} s^I(D_1^\circ)\big)$ has measure zero in $\T^d$. 
	 The inverse of $\restr{\phi}{D_1 \dot{\cup} D_2}$ is continuous on $\M \setminus \phi[D_2]$ and smooth in the manifold interior of $\M$ without $\phi[D_2]$, i.e., for any $\bm \xi \in \M \setminus \phi[D_2]$ in the manifold interior of $\M$, there exists a neighborhood $U$ of $\bm \xi$ in $\M$ such that $\restr{\smash{(\restr{\phi}{D_1\dot{\cup} D_2})^{-1}}}{U} \in \mathcal{C}^\infty(U)$. We call $\phi[D_2]$ the \emph{set of singularities}.
\end{lem}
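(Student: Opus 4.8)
The plan is to prove the four assertions in turn, using throughout that the symmetry maps $s^i$ are commuting affine involutions---so every composition $s^I$, $I\subseteq[p]$, is again an affine involution $\bm x\mapsto\mathrm S^I+\mathrm M^I\bm x$ with $\mathrm M^I$ diagonal and entries in $\{-1,1\}$---and that $D$ is a set of representatives of $\T^d/_{\sim}$, i.e.\ contains exactly one point of each $\sim$-class. For the even covering \eqref{eq:even} I argue by contradiction. If $\bm y\in s^I(D_1^\circ)\cap s^J(D_1^\circ)$ with $I\neq J$, write $\bm y=s^I(\bm x)=s^J(\bm x')$ with $\bm x,\bm x'\in D_1^\circ$; applying the self-inverse $s^J$ gives $\bm x'=s^J(s^I(\bm x))\sim\bm x$, and since $\bm x,\bm x'\in D_1^\circ=D^\circ\subseteq D$ the representative property forces $\bm x=\bm x'$. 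Hence $\bm x$ is a fixed point of the involution $g\coloneqq s^J\circ s^I$, which is nontrivial: minimality of $p$ makes $I\mapsto s^I$ injective on subsets (a relation $s^I=s^J$ would express one generator through the others and allow $p$ to be lowered). If $\mathrm M^g\neq\mathrm{id}$, choose $j$ with $(\mathrm M^g)_{jj}=-1$ and $\bm\epsilon=t\bm e^j$ small; then $g(\bm x+\bm\epsilon)=\bm x-\bm\epsilon$, so $\bm x+\bm\epsilon\sim\bm x-\bm\epsilon$ are distinct points of $D^\circ\subseteq D$, contradicting the representative property. If $\mathrm M^g=\mathrm{id}$, then $g$ is a nontrivial translation and has no fixed point at all---again a contradiction.

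For the measure assertions, the key observation is that $D_1,D_2\subseteq D$ with $D_1\supseteq D^\circ$ force $D_2\subseteq\partial D$ and $D_1\setminus D_1^\circ\subseteq\partial D$, where $\partial D$ is the boundary of a rectangle and hence a $d$-dimensional null set in $\R^d$. Since $\phi\in\mathcal C^\infty(\T^d)$ is Lipschitz (its first derivatives are bounded by \eqref{eq:DFSinf}), it maps $d$-null sets to sets of vanishing $d$-dimensional Hausdorff measure; thus $\phi[D_2]$ and, via $\M=\phi[D_1\dot\cup D_2]$ from \ref{Prop2}, also $\M\setminus\phi[D_1^\circ]\subseteq\phi[D_2]\cup\phi[D_1\setminus D_1^\circ]\subseteq\phi[\partial D]$ are null in $\M$. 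For the torus statement I use that, $D$ being a representative set, $\bigcup_{I\subseteq[p]}s^I(D)=\T^d$, so $\T^d\setminus\bigcup_I s^I(D_1^\circ)\subseteq\bigcup_I s^I(\partial D)$; each $s^I$ is an isometry, hence measure-preserving, and a finite union of images of the null set $\partial D$ is again null.

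For the inverse, continuity is immediate: if $\bm\xi\in\M\setminus\phi[D_2]$ then its canonical preimage cannot lie in $D_2$ (else $\bm\xi\in\phi[D_2]$), so $(\restr{\phi}{D_1\dot\cup D_2})^{-1}$ agrees there with $(\restr{\phi}{D_1})^{-1}$, which is continuous by \ref{Prop3}. For smoothness at an interior point $\bm\xi\notin\phi[D_2]$ I first show that its preimage $\bm x\coloneqq(\restr{\phi}{D_1})^{-1}(\bm\xi)$ actually lies in the open part $D_1^\circ$: reading the continuous injection $(\restr{\phi}{D_1})^{-1}$ in the coordinates of a chart of $\M$ around the interior point $\bm\xi$ yields a continuous injection from an open subset of $\R^d$ into $D_1\subseteq\R^d$, so by invariance of domain its image is open in $\R^d$ and therefore contained in $D_1^\circ$. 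Once $\bm x\in D_1^\circ$, property \ref{Prop4} says $\phi$ is a smooth immersion at $\bm x$; hence $\phi$ restricts to a diffeomorphism from a torus-neighborhood $W$ of $\bm x$ onto an embedded patch $\phi[W]$ that is a relative neighborhood of $\bm\xi$ in $\M$, with smooth inverse. By continuity and injectivity, $(\restr{\phi}{D_1\dot\cup D_2})^{-1}$ coincides with $(\restr{\phi}{W})^{-1}$ on a neighborhood $U\subseteq\phi[W]$ of $\bm\xi$, so it is $\mathcal C^\infty$ there.

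I expect the smoothness step to be the main obstacle, specifically locating the canonical preimage $\bm x$ in the open set $D_1^\circ$ rather than on $\partial D$, where \ref{Prop4} says nothing and $\phi$ need not be an immersion: the date line of the sphere shows that boundary preimages genuinely occur for other choices of $D_1$, so one must exploit that the continuity assumption \ref{Prop3} excludes such seams from $\M\setminus\phi[D_2]$, which is what the invariance-of-domain argument makes rigorous. The remaining care is to ensure that the immersion patch $\phi[W]$ is a genuine $\M$-neighborhood of the interior point $\bm\xi$ and that the local inverse extends smoothly into the ambient space, so as to qualify as a $\mathcal C^\infty(U)$ map in the sense of \Cref{defin:manifold}.
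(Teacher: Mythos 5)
Your proposal is correct and follows essentially the same route as the paper's proof: reduce \eqref{eq:even} to the absence of fixed points of nontrivial $s^{I\Delta J}$ in $D^\circ$ via the representative property, bound the exceptional sets by images of $\partial D$ under the smooth (hence null-set-preserving) maps $\phi$ and $s^I$, and obtain smoothness of the inverse by locating the preimage of an interior point in $D_1^\circ$ (your invariance-of-domain argument is the same topological fact the paper invokes as ``homeomorphisms preserve manifold boundaries'') and then applying the inverse function theorem via \ref{Prop4}. You in fact spell out two steps the paper leaves terse, namely the fixed-point-freeness of $s^I$ on $D^\circ$ via minimality of $p$ and the perturbation $\bm x\pm\bm\epsilon$, which is a welcome addition rather than a deviation.
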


\begin{proof}
	For \eqref{eq:even}, one can show that $s^I(\bm x)\not=\bm x$ for all $\bm x \in D^\circ$ and non-empty $I \subset [p]$.
	Furthermore, for $\bm x, \bm y \in D$ with $s^I(\bm x)=\bm y$ for some $I \subset [p]$, we must have $\bm x=\bm y$, as $D$ is a set of representatives of $\T^d/_\sim$, with $\sim$ as in \Cref{defin:DFS}. Thus, we have $s^I(D^\circ)\cap D^\circ=\emptyset$ for all $I \subset [p]$. Clearly, for any $I \subset [p]$ the set $s^I(D)$ is also a set of representatives of $\T^d/_\sim$ and, since $s^I$ is a diffeomorphism, $s^I(D^\circ)$ is open. Thus, we can use the same arguments to show that $s^I(D^\circ)\cap s^J(D^\circ)=\emptyset$ for all $I, \, J \subset [p]$ with $I\not=J$, this is \eqref{eq:even}.
	
	The boundary of a rectangular subset of $\T^d$ is a set of measure zero in $\T^d$. Thus, \ref{Prop1} in \Cref{defin:DFS} implies that $D_2 \subset D \setminus D_1^\circ \subset \partial D$ is a set of measure zero in $\T^d$. In particular, $\phi[D_2]$ and $\M \setminus \phi[D_1^\circ]\subset \phi[\partial D]$ have measure zero in $\M$ since $\phi$ is smooth, cf.\  \cite[thm.~6.9]{Lee12}. Furthermore, we have for all $I,J \subset [p]$ with $I \not=J$ that $s^I(D_1^\circ)\subset s^I(D)$ and $s^J(D_1^\circ)\cap s^I(D)=\emptyset$, where we used \eqref{eq:even} and the fact that $s^I(D)$ and $s^J(D)$ are rectangular. Since $D$ is a set of representatives of $\T^d/_\sim$, we obtain that
	\begin{equation*}
		\T^d \setminus \big( \bigcup_{I \subset [p]} s^I(D_1^\circ)\big)=\bigcup_{I \subset [p]} \big(s^I(D)\setminus  s^I(D_1^\circ)\big) \subset \bigcup_{I \subset [p]} s^I(\partial D),
	\end{equation*}
	are sets of measure zero in $\T^d$.
 
	The continuity of $\big(\restr{\phi}{D_1\dot{\cup} D_2}\big)^{-1}$ in $\M \setminus \phi[D_2]$ follows immediately form \ref{Prop2} and \ref{Prop3} in \Cref{defin:DFS}. For the smoothness in the manifold interior, we observe that, by \ref{Prop1}, the set $\M \setminus \phi[D_2]$ is open in $\M$ and thus a smooth submanifold with boundary of $\R^{d^\prime}$. Since homeomorphisms preserve manifold boundaries, \ref{Prop3} implies that $\phi[D_1^ \circ]$ is the manifold interior of $\M \setminus \phi[D_2]$, in particular $\phi[D_1^\circ]$ is a smooth submanifold without boundary of $\R^{d^\prime}$. Similarly $D_1^\circ$ is open in $\T^d$ and thus a smooth manifold without boundary. By \ref{Prop4}, $\nabla \phi(\bm x)$ has full rank for all $\bm x \in D_1^\circ$, thus we can apply the inverse function theorem \cite[thm.~4.5]{Lee12} to $\restr{\phi}{D_1^\circ}$. We obtain that $\restr{\phi}{D_1^\circ}$ is a local diffeomorphism, in particular the inverse of $\restr{\phi}{D_1^\circ}$ is smooth, in the sense that coordinate representations are infinitely differentiable, and can thus be extended to a function defined on an open neighborhood of $\phi[D_1^\circ] \subset \R^{d^\prime}$ that has continuous partial derivatives of all orders. However, these derivatives might be unbounded, thus we obtain $\big(\restr{\phi}{D_1^\circ}\big)^{-1} \in \mathcal{C}^\infty$ only locally, cf. \Cref{defin:hoel_R,defin:manifold}.
\end{proof}

As in the special case of the classical DFS method, the symmetry properties of the generalized method impose symmetry upon the DFS functions. Furthermore, functions on the torus that are invariant under the DFS symmetry functions correspond to functions well-defined on the manifold without the set of singularities. This relationship is formalized in the following lemma, which will later allow us to transfer the series approximation of transformed functions on the torus back to the manifold.

\begin{lem}\label{lem:BMC}
	Let $\phi \colon \T^d \to \M$ be a generalized DFS transform with symmetry number $p$ and let $D_1$, $D_2$, and $s^i$, $i \in [p]$, be as in \Cref{defin:DFS}. We call some function $g \colon \T^d \to \C$ a \emph{BMC function (of type $\phi$)} if it is invariant under the symmetry functions $s^i$, i.e., $g = g\circ s^i$ for all $i \in [p]$. 
  For any $f\colon \M\to\C$, its DFS function $\tilde f$ is a BMC function.
  Conversely, if $g \colon \T^d \to \C$ is a BMC function, then there exists $f \colon \M \to \C$ such that
	\begin{equation*}
		\tilde{f}(\bm x)=g(\bm x), \qquad \bm x \in \bigcup_{I \subset [p]} s^I[D_1].
	\end{equation*}
	All possible choices of such $f$ coincide on $\M \setminus \phi[D_2]$. Setting $f \coloneqq g \circ ({\restr{\phi}{D_1\dot{\cup} D_2}})^{-1}$ yields the unique $f$ that also satisfies $\tilde{f}(\bm x)=g(\bm x)$ for $\bm x \in D_2$.
\end{lem}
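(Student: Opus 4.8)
The plan is to prove the two directions separately and then settle uniqueness by tracking exactly where the bijectivity of $\restr{\phi}{D_1\dot{\cup}D_2}$ and the two invariance properties (of $g$ and of $\phi$) are used. The forward direction is immediate: by the symmetry assumption in \Cref{defin:DFS} we have $\phi\circ s^i=\phi$ for every $i\in[p]$, so for any $f\colon\M\to\C$,
\[
	\tilde f\circ s^i=f\circ\phi\circ s^i=f\circ\phi=\tilde f ,
\]
whence $\tilde f$ is invariant under each $s^i$ and is a BMC function.

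For the converse I would take the canonical candidate $f\coloneqq g\circ(\restr{\phi}{D_1\dot{\cup}D_2})^{-1}$, which is well defined on all of $\M$ because $\restr{\phi}{D_1\dot{\cup}D_2}\colon D_1\dot{\cup}D_2\to\M$ is a bijection by \ref{Prop2}. Since its inverse undoes $\phi$ on $D_1\dot{\cup}D_2$, one gets $\tilde f(\bm x)=f(\phi(\bm x))=g(\bm x)$ directly for every $\bm x\in D_1\dot{\cup}D_2$; in particular $\tilde f=g$ on $D_1$ and on $D_2$, which already yields the asserted identity on $D_2$. To extend the identity to $\bigcup_{I\subset[p]}s^I[D_1]$, I would fix $\bm x=s^I(\bm y)$ with $\bm y\in D_1$ and combine two facts: invariance of $g$ under each $s^i$, hence under the composition $s^I$, gives $g(\bm x)=g(\bm y)$, while $\phi\circ s^I=\phi$ gives $\tilde f(\bm x)=f(\phi(s^I(\bm y)))=f(\phi(\bm y))=\tilde f(\bm y)$; since $\bm y\in D_1$ the previous step gives $\tilde f(\bm y)=g(\bm y)$, and chaining yields $\tilde f(\bm x)=g(\bm x)$. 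This computation does not depend on the chosen representation $\bm x=s^I(\bm y)$, so no inconsistency arises even where the sets $s^I[D_1]$ overlap.

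Finally, for uniqueness I would argue pointwise. Let $f$ be any admissible function, i.e.\ $\tilde f=g$ on $\bigcup_{I\subset[p]}s^I[D_1]$, and take $\bm\xi\in\M\setminus\phi[D_2]$. By \ref{Prop2} there is a unique preimage $\bm x\in D_1\dot{\cup}D_2$ with $\phi(\bm x)=\bm\xi$, and $\bm x\notin D_2$ since $\bm\xi\notin\phi[D_2]$; hence $\bm x\in D_1\subset\bigcup_{I\subset[p]}s^I[D_1]$ (using the empty composition $I=\emptyset$). Then $f(\bm\xi)=f(\phi(\bm x))=\tilde f(\bm x)=g(\bm x)$, a value depending only on $g$, so all admissible $f$ coincide on $\M\setminus\phi[D_2]$. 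Adding the condition $\tilde f=g$ on $D_2$ pins down the remaining values: for $\bm\xi=\phi(\bm x)\in\phi[D_2]$ with $\bm x\in D_2$ one gets $f(\bm\xi)=\tilde f(\bm x)=g(\bm x)$, matching the canonical choice. The step requiring the most care is precisely this bookkeeping on the singular set $\phi[D_2]$: away from it the value of $f$ is forced, whereas on $\phi[D_2]$ uniqueness is recovered only via the extra matching condition on $D_2$, and one must use injectivity of $\restr{\phi}{D_1\dot{\cup}D_2}$ to guarantee that the unique preimage of each $\bm\xi\in\phi[D_2]$ lies in $D_2$ and not in $D_1$.
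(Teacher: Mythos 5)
Your proposal is correct and follows essentially the same route as the paper: the forward direction from $\phi\circ s^i=\phi$, the canonical candidate $f=g\circ(\restr{\phi}{D_1\dot{\cup}D_2})^{-1}$ extended to $\bigcup_{I\subset[p]}s^I[D_1]$ via the two invariances, and uniqueness on $\M\setminus\phi[D_2]$ by pulling back through the bijection restricted to $D_1$. Your explicit bookkeeping of why the unique preimage of a point of $\M\setminus\phi[D_2]$ must lie in $D_1$ is a welcome, if minor, elaboration of what the paper leaves implicit.
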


\begin{proof}
	By \Cref{defin:DFS}, we know that $\phi$ is $s^i$-invariant for any $i \in [p]$. Thus, any DFS function is a BMC function. By \ref{Prop2} in \Cref{defin:DFS}, the transform $\phi$ bijectively maps $D_1 \dot{\cup} D_2$ to $\M$. In particular, the function $f \coloneqq g \circ (\restr{\phi}{D_1\dot{\cup} D_2})^{-1}$ is well-defined and it is clearly the unique choice of function whose DFS function coincides with $g$ on $D_1 \dot{\cup} D_2$. If $g$ is a BMC function both $\tilde{f}$ and $g$ are invariant under the symmetry functions and thus the equality extends to $\cup_{I \subset [p]} s^I[D_1\dot{\cup} D_2]$. Inversely, if $\tilde{f_1}(\bm x)=g(\bm x)=\tilde{f_2}(\bm x)$ for $\bm x \in D_1$, then we have for $\bm \xi  \in \phi[D_1]=\M \setminus \phi[D_2]$ that
	\begin{equation*}
		f_1(\bm \xi)=(f_1\circ \phi)\big((\restr{\phi}{D_1})^{-1}(\bm \xi)\big)=(f_2\circ \phi)\big((\restr{\phi}{D_1})^{-1}(\bm \xi)\big)=f_2(\bm \xi).\qedhere
	\end{equation*}
\end{proof}

We need the following proposition, whose proof can be found in \cite[p.~7]{MilQue22}.

\begin{prop}\label{prop:lipH}
	Let $U \subset \R^d$ be an open set, $V \subset U$, and $g\colon U \to \C$. If $g$ is bounded and Lipschitz-continuous on $V$, then $g$ is $\alpha$-Hölder continuous on $V$ for all $0<\alpha<1$ with 
	\begin{equation}\label{eq:lipH1}
		\abs{g}_{\mathcal{C}^{\alpha}(V)} \le \max \left\{\abs{g}_{\mathrm{Lip}(V)}, \, 2\norm{g}_{\mathcal{C}(V)} \right\}.
	\end{equation} 
	Furthermore, if $V$ is convex and $g\in\mathcal{C}^1(U)$, then $g$ is Lipschitz-continuous on $V$ with
	\begin{equation}\label{eq:lipH2}
		\abs{g}_{\mathrm{Lip}(V)} \le \norm{\nabla g}_{\mathcal{C}(U,\C^d)}.
	\end{equation}
\end{prop}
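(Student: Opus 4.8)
The plan is to treat the two assertions separately, as they rest on unrelated elementary ideas: the Hölder estimate \eqref{eq:lipH1} follows from a case distinction governed by the size of $\norm{\bm x - \bm y}$ relative to $1$, while the Lipschitz estimate \eqref{eq:lipH2} is the mean value inequality obtained by integrating $\nabla g$ along line segments.

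For \eqref{eq:lipH1} I would fix $\bm x, \bm y \in V$ with $\bm x \neq \bm y$ and bound the difference quotient $|g(\bm x) - g(\bm y)| / \norm{\bm x - \bm y}^\alpha$ by splitting according to whether $\norm{\bm x - \bm y} \le 1$ or $\norm{\bm x - \bm y} > 1$. In the first case, since $0 < \alpha < 1$ and the base lies in $(0,1]$, one has $\norm{\bm x - \bm y}^\alpha \ge \norm{\bm x - \bm y}$, so the quotient is at most $|g(\bm x) - g(\bm y)| / \norm{\bm x - \bm y} \le \abs{g}_{\mathrm{Lip}(V)}$. In the second case $\norm{\bm x - \bm y}^\alpha > 1$, whence the quotient is at most $|g(\bm x) - g(\bm y)| \le |g(\bm x)| + |g(\bm y)| \le 2 \norm{g}_{\mathcal{C}(V)}$ by the triangle inequality. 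Taking the supremum over all admissible $\bm x, \bm y$ gives \eqref{eq:lipH1}.

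For \eqref{eq:lipH2} I would use convexity to ensure that for $\bm x, \bm y \in V$ the segment $\{\bm x + t(\bm y - \bm x) : t \in [0,1]\}$ lies in $V \subset U$, so that $t \mapsto g(\bm x + t(\bm y - \bm x))$ is a well-defined $\mathcal{C}^1$ map on $[0,1]$ and the fundamental theorem of calculus yields $g(\bm y) - g(\bm x) = \int_0^1 \nabla g(\bm x + t(\bm y - \bm x)) \cdot (\bm y - \bm x) \d t$. Because $\bm y - \bm x$ is real, the Cauchy–Schwarz inequality bounds the integrand in modulus by $\norm{\nabla g(\cdot)} \, \norm{\bm y - \bm x} \le \norm{\nabla g}_{\mathcal{C}(U, \C^d)} \norm{\bm y - \bm x}$; integrating and dividing by $\norm{\bm y - \bm x}$ gives the claimed bound.

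None of these steps constitutes a real obstacle. The only points requiring a little care are the elementary inequality $t^\alpha \ge t$ for $t \in (0,1]$, which is exactly what makes the threshold $1$ the right place to split, and the fact that $g$ is complex-valued, so that the fundamental theorem of calculus and Cauchy–Schwarz are applied to a complex integrand — this is harmless since the displacement $\bm y - \bm x$ is real and the chain rule acts componentwise. Convexity enters solely to keep the connecting segment inside $U$, where $\nabla g$ is controlled.
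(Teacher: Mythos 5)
Your argument is correct and is the standard one: the paper itself does not reproduce a proof but defers to \cite[p.~7]{MilQue22}, where the same two steps appear — the case split at $\norm{\bm x-\bm y}=1$ using $t^\alpha\ge t$ on $(0,1]$ for \eqref{eq:lipH1}, and the mean value inequality along the segment (valid for complex-valued $g$ since $\bm y-\bm x$ is real and Cauchy--Schwarz applies) for \eqref{eq:lipH2}. Nothing is missing.
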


Utilizing the smoothness \eqref{eq:DFSinf} of the generalized DFS transform, we can immediately conclude the following properties.

\begin{cor}\label{lem:DFS}
	Let $\phi \colon \T^d \to \M$ be a generalized DFS transform of some smooth embedded submanifold with or without corners $\M$ of $\R^{d^\prime}$. Then, for all $\bm{\mu} \in \NN^d$ and $l \in [{d^\prime}]$ we have $\mathrm D^{\bm{\mu}}\phi_l \in \mathrm{Lip}(\T^d)$ and $\mathrm D^{\bm{\mu}}\phi_l \in \mathcal{C}^\alpha(\T^d)$ with 
	\begin{align}
        & \abs{\mathrm D^{\bm{\mu}} \phi_l}_{\mathrm{Lip}(\T^d)} \le \sqrt{d}, \label{eq:DFSL}\\
		& \abs{\mathrm D^{\bm{\mu}} \phi_l}_{\mathcal{C}^\alpha(\T^d)} \le 2\sqrt{d}.	\label{eq:DFSh}
	\end{align}
\end{cor}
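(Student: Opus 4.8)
The plan is to reduce everything to \Cref{prop:lipH} by working with the $2\pi$-periodic representative of each $\mathrm D^{\bm{\mu}}\phi_l$ on all of $\R^d$. Since $\phi \in \mathcal{C}^\infty(\T^d)$, every partial derivative $\mathrm D^{\bm{\mu}}\phi_l$ is again smooth and $2\pi$-periodic; in particular it lies in $\mathcal{C}^1(\R^d)$, so both parts of \Cref{prop:lipH} are applicable. The observation that makes the constants come out cleanly is that, by definition of the function spaces on the torus, the $\T^d$-seminorms of a periodic function coincide with the corresponding $\R^d$-seminorms of its periodic extension, and that $\R^d$ is convex — this is exactly what will let me invoke the mean-value estimate \eqref{eq:lipH2}.

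First I would establish the Lipschitz bound \eqref{eq:DFSL}. Setting $g \coloneqq \mathrm D^{\bm{\mu}}\phi_l$ and taking $U=V=\R^d$ in \eqref{eq:lipH2}, I obtain $\abs{g}_{\mathrm{Lip}(\T^d)} \le \norm{\nabla g}_{\mathcal{C}(\R^d,\C^d)}$. The components of $\nabla g$ are precisely the next-order derivatives $\mathrm D^{\bm{\mu}+\bm e^j}\phi_l$, $j \in [d]$, each of which satisfies $\norm{\mathrm D^{\bm{\mu}+\bm e^j}\phi_l}_{\mathcal{C}(\T^d)} \le 1$ by the smoothness assumption \eqref{eq:DFSinf}. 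Hence $\norm{\nabla g(\bm x)} = \big(\sum_{j=1}^d \abs{\mathrm D^{\bm{\mu}+\bm e^j}\phi_l(\bm x)}^2\big)^{1/2} \le \sqrt{d}$ pointwise, which yields \eqref{eq:DFSL}.

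Then I would deduce the Hölder bound \eqref{eq:DFSh} from the first estimate of \Cref{prop:lipH}. Since $g$ is bounded, with $\norm{g}_{\mathcal{C}(\T^d)} \le 1$ by \eqref{eq:DFSinf}, and Lipschitz with constant at most $\sqrt{d}$, estimate \eqref{eq:lipH1} gives $\abs{g}_{\mathcal{C}^\alpha(\T^d)} \le \max\{\sqrt{d},\, 2\}$ for every $0<\alpha<1$. Because $d \ge 1$ we have both $\sqrt{d} \le 2\sqrt{d}$ and $2 \le 2\sqrt{d}$, so this maximum is bounded by $2\sqrt{d}$, establishing \eqref{eq:DFSh}.

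The only genuine subtlety, and the step I would be most careful about, is the passage from the torus to $\R^d$: the torus is not convex, so \eqref{eq:lipH2} cannot be applied to it directly, and I must phrase the argument in terms of the periodic extension on the convex domain $\R^d$, verifying that the relevant seminorms are preserved under this identification. Once that is set up, the remainder is routine bookkeeping with multi-indices and the pointwise bound \eqref{eq:DFSinf}.
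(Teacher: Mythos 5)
Your argument is correct and is exactly the route the paper intends: the corollary is stated as an immediate consequence of \Cref{prop:lipH} combined with the uniform bound \eqref{eq:DFSinf}, applying \eqref{eq:lipH2} on the convex set $\R^d$ to get the Lipschitz constant $\sqrt{d}$ and then \eqref{eq:lipH1} to get $\max\{\sqrt{d},2\}\le 2\sqrt{d}$. Your care about working with the periodic representative on $\R^d$ rather than on the (non-convex) torus is consistent with how the paper defines the $\T^d$ function spaces, so there is nothing to add.
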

\section{Hölder continuity of DFS functions}\label{sec:smoothness}

In this section, we show that the DFS transform preserves Hölder-smoothness. More precisely, it maps the function spaces $\mathcal{C}^{k+1}(\M)$ and $\mathcal{C}^{k,\alpha}(\M)$ into the Hölder space $\mathcal{C}^{k,\alpha}(\T^d)$. We prove respective norm bounds. 
In \Cref{sec:series}, we will utilize these findings to obtain convergence rates of the series representation of the DFS function. 
The results in this section only require the smoothness \eqref{eq:DFSinf} of the DFS transform and are straightforward generalizations of the work \cite[§~4]{MilQue22} on the classical DFS method.

The following technical lemma, which is proven in \Cref{app:proof}, bounds the number of summands in the multivariate chain rule for higher partial derivatives of vector-valued functions.

\begin{lem}\label{lem:faaDi}
	For $d, \,{d^\prime}\in\N$ and $k \in \NN$, let $h \colon U \to V$ and $g \colon V \to \mathbb{C}$ be $k$-times continuously differentiable functions defined on some open sets $U \subset \mathbb{R}^d$ and $V \subset \mathbb{R}^{d^\prime}$, respectively. Then, for any $\bm{\beta} \in B^d_k$, we have
    \begin{equation}\label{eq:indH1}
        \mathrm D^{\bm{\beta}} (g \circ h)=  \sum_{i=1}^n \pare{\mathrm D^{\bm{\gamma}_i}g \circ h} \, \prod_{j=1}^{m_i} \mathrm D^{\bm{\mu}_{ij}}h_{\ell_{ij}}
    \end{equation}
    for some constants depending on $\bm{\beta}$, which fulfill
    \begin{align}
        n &\in \NN, \, n \leq \tfrac{(k+{d^\prime}-1)!}{({d^\prime}-1)!},\label{eq:indH2} \\
        m_i &\in \NN, \, m_i \le k, \, \, i \in [n],\label{eq:indH4}\\
        \bm{\gamma}_i &\in B^{d^\prime}_k, \, i \in [n],\label{eq:indH3} \\
        \bm{\mu}_{ij} &\in B^d_k, \, i\in [n], \, j \in [m_i],\label{eq:indH5}\\
        \ell_{ij} &\in [{d^\prime}], \, i \in [n], \, j \in [m_i].\label{eq:indH6}
    \end{align}
\end{lem}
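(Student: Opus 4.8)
The plan is to prove both the formula \eqref{eq:indH1} and the accompanying bounds \eqref{eq:indH2}--\eqref{eq:indH6} simultaneously by induction on the total order $k=\abs{\bm\beta}$, peeling off one partial derivative at a time. The base case $\abs{\bm\beta}=0$ is immediate: $g\circ h$ is itself of the claimed form with $n=1$, $\bm\gamma_1=\bm 0$, and an empty product ($m_1=0$); here the count reads $1\le \tfrac{(d^\prime-1)!}{(d^\prime-1)!}=1$, and the remaining bounds hold vacuously.

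For the inductive step I would write $\bm\beta=\bm\beta'+\bm e^r$ with $\abs{\bm\beta'}=\abs{\bm\beta}-1$ and apply $\partial_r=\mathrm D^{\bm e^r}$ to the expansion of $\mathrm D^{\bm\beta'}(g\circ h)$ supplied by the induction hypothesis; since $g$ and $h$ are $k$-times continuously differentiable, mixed partials commute and the choice of $r$ is irrelevant. Differentiating a single summand $(\mathrm D^{\bm\gamma_i}g\circ h)\prod_{j=1}^{m_i}\mathrm D^{\bm\mu_{ij}}h_{\ell_{ij}}$ by the product rule yields two kinds of new terms. Differentiating the composite factor via the chain rule, $\partial_r(\mathrm D^{\bm\gamma_i}g\circ h)=\sum_{\ell=1}^{d^\prime}(\mathrm D^{\bm\gamma_i+\bm e^\ell}g\circ h)\,\mathrm D^{\bm e^r}h_\ell$, produces $d^\prime$ terms, each raising $\bm\gamma_i$ to $\bm\gamma_i+\bm e^\ell$ and appending a fresh factor $\mathrm D^{\bm e^r}h_\ell$ of order one; differentiating each of the $m_i$ product factors produces $m_i$ further terms, each raising some $\bm\mu_{ij}$ to $\bm\mu_{ij}+\bm e^r$. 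All of these again have the shape \eqref{eq:indH1}.

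The multi-index bounds then follow by tracking two invariants through this recursion. First, $\sum_{j=1}^{m_i}\abs{\bm\mu_{ij}}=\abs{\bm\beta}$ for every $i$, since each differentiation step increases this sum by exactly one, either through the new order-one factor or by raising an existing $\bm\mu_{ij}$; this gives $\bm\mu_{ij}\in B^d_k$, i.e.\ \eqref{eq:indH5}. As every factor created along the way has order at least one, the same invariant forces $m_i\le\abs{\bm\beta}\le k$, which is \eqref{eq:indH4}. Second, $\abs{\bm\gamma_i}=m_i$ for every $i$, because both quantities increase by one precisely in the chain-rule branch and are unchanged in the product-rule branch; hence $\bm\gamma_i\in B^{d^\prime}_k$, i.e.\ \eqref{eq:indH3}, while $\ell_{ij}\in[d^\prime]$ is automatic, giving \eqref{eq:indH6}.

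Finally, for the count \eqref{eq:indH2}: each summand spawns exactly $d^\prime+m_i\le d^\prime+k$ new summands, so if $N(k)$ denotes the number of summands at order $k$, then $N(k+1)\le(d^\prime+k)\,N(k)$ with $N(0)=1$, whence $N(k)\le\prod_{j=0}^{k-1}(d^\prime+j)=\tfrac{(d^\prime+k-1)!}{(d^\prime-1)!}$. I expect the only genuine subtlety to lie in this last step: the naive branching factor $d^\prime+m_i$ must be controlled, and this is exactly where the invariant $m_i\le k$ established above is essential, since it caps the branching at $d^\prime+k$ and produces the Pochhammer product rather than a faster-growing bound. The rest is careful but routine bookkeeping of the product and chain rules.
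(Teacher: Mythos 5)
Your proposal is correct and follows essentially the same route as the paper: induction on $\abs{\bm\beta}$, peeling off one derivative, splitting each summand via the product rule into a chain-rule branch (branching factor $d^\prime$) and a product-rule branch (branching factor $m_i$), and bounding the summand count by the recursion $N(j+1)\le (d^\prime+j)N(j)$. The explicit invariants $\sum_j\abs{\bm\mu_{ij}}=\abs{\bm\beta}$ and $\abs{\bm\gamma_i}=m_i$ are a slightly sharper way of organizing the same bookkeeping the paper carries out term by term.
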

\begin{thm}\label{thm:hC_bound}
    Let $\M\subset\R^{d^\prime}$ be a smooth embedded submanifold with or without corners
that admits a generalized DFS transform $\phi \colon \T^d \to \M$. For $k \in \NN$ and $f \in \mathcal{C}^{k+1}(\M)$, the generalized DFS function $\tilde{f}=f \circ \phi$ is in $\mathcal{C}^{k,\alpha}(\T^d)$ for all $0<\alpha<1$. If $k+d^\prime\ge 2$, we have
    \begin{equation}\label{eq:hC_bound}
        \big\lvert \tilde{f}\big\rvert_{\mathcal{C}^{k,\alpha}(\T^d)}\le \sqrt{d} \, \frac{(k+{d^\prime})!}{({d^\prime}-1)!} \, \norm{f}_{\mathcal{C}^{k+1}(\M)},
    \end{equation}
    and if $k+d^\prime=1$, then $\big\lvert \tilde{f}\big\rvert_{\mathcal{C}^\alpha(\T^1)} \le 2 \norm{f}_{\mathcal{C}^1(\M)}$.
    Furthermore, it holds that $\tilde{f} \in \mathcal{C}^{k+1}(\T^d)$ with
    \begin{equation} \label{eq:Ck_norm}
     \big\lVert \tilde f \big\rVert_{\mathcal{C}^{k+1}(\T^d)}\le \frac{(k+{d^\prime})!}{({d^\prime}-1)!}\norm{f}_{\mathcal{C}^{k+1}(\M)}.
    \end{equation}
\end{thm}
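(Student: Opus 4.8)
The plan is to transfer the estimate to $\R^{d'}$ through an extension of $f$ and then apply the multivariate chain rule of \Cref{lem:faaDi} together with the uniform derivative bound \eqref{eq:DFSinf}. Fix any $\mathcal{C}^{k+1}$-extension $f^\ast\colon U\to\C$ of $f$; since $\phi[\T^d]=\M\subset U$, we have $\tilde f=f\circ\phi=f^\ast\circ\phi$ on all of $\T^d$, so every partial derivative of $\tilde f$ is computable from $f^\ast$ and $\phi$ alone. I would prove the $\mathcal{C}^{k+1}$-bound \eqref{eq:Ck_norm} first, since it feeds directly into the Hölder estimate.

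For \eqref{eq:Ck_norm}, apply \Cref{lem:faaDi} with $k$ replaced by $k+1$, taking $g=f^\ast$ and $h=\phi$. For any $\bm\beta\in B^d_{k+1}$ this expresses $\mathrm D^{\bm\beta}\tilde f$ as a sum of $n\le\frac{(k+d')!}{(d'-1)!}$ terms, each a product of a single factor $\mathrm D^{\bm\gamma_i}f^\ast\circ\phi$ with $\bm\gamma_i\in B^{d'}_{k+1}$ and finitely many factors $\mathrm D^{\bm\mu_{ij}}\phi_{\ell_{ij}}$. Bounding every $\phi$-factor by $1$ via \eqref{eq:DFSinf} and each $f^\ast$-factor by the extension seminorm $\abs{f^\ast}_{\mathcal{C}^{k+1}(\M)}^\ast$ of \eqref{eq:C_semi_manifold} (legitimate since $\phi(\bm x)\in\M$) gives the pointwise estimate $\norm{\mathrm D^{\bm\beta}\tilde f(\bm x)}\le\frac{(k+d')!}{(d'-1)!}\abs{f^\ast}_{\mathcal{C}^{k+1}(\M)}^\ast$. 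The same formula shows these derivatives are continuous and bounded, hence $\tilde f\in\mathcal{C}^{k+1}(\T^d)$; taking the supremum over $\bm x$ and $\bm\beta$, then the infimum over extensions, yields \eqref{eq:Ck_norm}.

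For the Hölder bound, fix $\bm\beta$ with $\abs{\bm\beta}=k$. As $\tilde f\in\mathcal{C}^{k+1}(\T^d)$, the derivative $\mathrm D^{\bm\beta}\tilde f$ lies in $\mathcal{C}^1$, so I would invoke \Cref{prop:lipH} with $U=V=\R^d$ (convex): \eqref{eq:lipH2} and periodicity give $\abs{\mathrm D^{\bm\beta}\tilde f}_{\mathrm{Lip}(\T^d)}\le\norm{\nabla\mathrm D^{\bm\beta}\tilde f}_{\mathcal{C}(\T^d,\C^d)}\le\sqrt d\,\frac{(k+d')!}{(d'-1)!}\abs{f^\ast}_{\mathcal{C}^{k+1}(\M)}^\ast$, where the gradient's Euclidean norm is bounded by $\sqrt d$ times the order-$(k+1)$ pointwise estimate above. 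Then \eqref{eq:lipH1} bounds the Hölder seminorm by the maximum of this Lipschitz bound and $2\norm{\mathrm D^{\bm\beta}\tilde f}_{\mathcal{C}(\T^d)}\le2\frac{(k+d'-1)!}{(d'-1)!}\abs{f^\ast}_{\mathcal{C}^{k+1}(\M)}^\ast$, using the order-$k$ version of the pointwise estimate.

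The final step is to decide which term of this maximum dominates. Their ratio equals $\tfrac12\sqrt d\,(k+d')$, which is at least $1$ precisely when $\sqrt d\,(k+d')\ge2$; since $d\ge1$ this holds whenever $k+d'\ge2$, so the Lipschitz term controls the maximum and, after taking the infimum over extensions, gives \eqref{eq:hC_bound}. The only remaining case $k+d'=1$ forces $k=0$ and $d=d'=1$, where instead the sup-norm term dominates and produces the stated factor $2$. Finiteness of the Hölder seminorm together with $\tilde f\in\mathcal{C}^{k+1}\subset\mathcal{C}^k$ shows $\tilde f\in\mathcal{C}^{k,\alpha}(\T^d)$. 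I expect the main obstacle to be purely clerical: keeping the factorial constants of \Cref{lem:faaDi} consistent across its order-$k$ and order-$(k+1)$ applications, and verifying on which side of $2$ the quantity $\sqrt d\,(k+d')$ lies in both regimes.
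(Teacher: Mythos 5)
Your proposal is correct and takes essentially the same route as the paper's own proof: apply \Cref{lem:faaDi} to $f^\ast\circ\phi$ with the uniform bound \eqref{eq:DFSinf} to get the pointwise derivative estimates up to order $k+1$, pass through \Cref{prop:lipH} to convert the order-$(k+1)$ bound into a Lipschitz bound with the extra factor $\sqrt d$, and resolve the maximum in \eqref{eq:lipH1} by the same case distinction on whether $k+d'\ge2$ or $k+d'=1$ (forcing $d=d'=1$, $k=0$). The only cosmetic difference is that you carry a single fixed extension $f^\ast$ through the whole argument and take the infimum once at the end, whereas the paper takes the infimum at each intermediate inequality; both are valid.
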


\begin{proof}
	We first prove 
	\begin{equation}\label{eq:C_bound}
		\big\lVert \mathrm D^{\bm{\beta}}\tilde{f} \big\rVert_{\mathcal{C}(\R^d)}\le \frac{(k^\prime+{d^\prime}-1)!}{({d^\prime}-1)!} \, \norm{f}_{\mathcal{C}^{k^\prime}(\M)}
	\end{equation}
	for all $k^\prime \in \NN$ with $k^\prime \le k+1$ and all $\bm{\beta} \in B_{k^\prime}^d$, which then also implies \eqref{eq:Ck_norm}. Let $f^\ast \in \mathcal{C}^{k^\prime}(U)$ be a $\mathcal{C}^{k^\prime}$-extension of $f$, where $U \subset \M$ is some open set as in \Cref{defin:manifold}. Since $\phi$ satisfies $\phi[\mathbb{R}^d] = \M \subset U$, it can be considered as a function $\phi\colon\R^d\to U$. Thus, we can apply \Cref{lem:faaDi} to $\tilde{f}=f^\ast \circ \phi$ and obtain $\tilde{f} \in \mathcal{C}^{k^\prime}(\R^d)$ and 
    \begin{equation*}
        \mathrm D^{\bm{\beta}} \tilde{f} = \mathrm D^{\bm{\beta}} (f^* \circ \phi)= \sum_{i=1}^n \pare{\mathrm D^{\bm{\gamma}_i}f^* \circ \phi} \, \prod_{j=1}^{m_i} \mathrm D^{\bm{\mu}_{ij}}\phi_{\ell_{ij}}
    \end{equation*}
    for some constants satisfying \eqref{eq:indH2} to \eqref{eq:indH6} for $k^\prime$. This implies that for all $\bm{x} \in \R^d$
    \begin{alignat*}{3}
        &\big\lvert \mathrm D^{\bm{\beta}}\tilde{f}(\bm{x}) \big\rvert
        && \centermathcell{\le}  &&\sum_{i=1}^n \big\lvert \pare{\mathrm D^{\bm{\gamma}_i}f^* \circ \phi}(\bm{x})\big\rvert \, \prod_{j=1}^{m_i} \abs{\mathrm D^{\bm{\mu}_{ij}}\phi_{\ell_{ij}}(\bm{x})}\\
    		& && \centermathcell{\underset{\eqref{eq:C_semi_manifold},\eqref{eq:DFSinf}}{\le}} && n \, \abs{f^*}_{\mathcal{C}^{k^\prime}(\M)}^* \underset{\eqref{eq:indH2}}{\le}\frac{(k^\prime+{d^\prime}-1)!}{({d^\prime}-1)!} \, \abs{f^*}_{\mathcal{C}^{k^\prime}(\M)}^*.
    \end{alignat*}
	Since this bound holds for any $\mathcal{C}^{k^\prime}$-extension $f^*$ of $f$,  we can replace $\abs{f^*}_{\mathcal{C}^{k^\prime}(\M)}^*$ by $\norm{f}_{\mathcal{C}^{k^\prime}(\M)}$, see \eqref{eq:C_norm_manifold}, on the right hand side. This proves \eqref{eq:C_bound}. Next, we show
	\begin{equation}\label{eq:L_bound}
		\big\lvert \mathrm D^{\bm{\beta}}\tilde{f}\big\rvert_{\mathrm{Lip}(\R^d)}\le \sqrt{d} \, \frac{(k+{d^\prime})!}{({d^\prime}-1)!} \, \norm{f}_{\mathcal{C}^{k+1}(\M)} 
	\end{equation}
	for all $\bm{\beta}\in B_k^d$. We know that $\mathrm D^{\bm{\beta}}\tilde{f}$ is continuously differentiable and by \eqref{eq:C_bound} for $k^\prime=k+1$, we obtain
    \begin{align*}
        \big\lVert \nabla \big(\mathrm D^{\bm{\beta}}\tilde{f} \,\big)\big\rVert_{\mathcal{C}(\R^d,\R^d)} & = \sup_{\bm{x} \in \R^d} \sqrt{\sum_{p=1}^d \big\lvert \big( \mathrm D^{\bm{e}^p+\bm{\beta}}\tilde{f} \, \big)(\bm{x})\big\rvert^2}\\
        & \le \sqrt{d \pare{\frac{(k+{d^\prime})!}{({d^\prime}-1)!} \, \norm{f}_{\mathcal{C}^{k+1}(\M)}}^2}
        = \sqrt{d} \, \frac{(k+{d^\prime})!}{({d^\prime}-1)!} \,  \norm{f}_{\mathcal{C}^{k+1}(\M)}.
    \end{align*}
    Together with \eqref{eq:lipH2} this proves \eqref{eq:L_bound}.
    Combining \eqref{eq:C_bound} for $k^\prime=k$ with \eqref{eq:L_bound} and applying \eqref{eq:lipH1}, we conclude for all $\bm{\beta}\in B^d_k$ that $\mathrm D^{\bm{\beta}}\tilde{f}$ is $\alpha$-Hölder continuous. We obtain
    \begin{align*}
        \big\lvert \mathrm D^{\bm{\beta}}\tilde{f}\big\rvert_{\mathcal{C}^\alpha(\R^d)} & \le \max \left\{ \big\lvert\mathrm D^{\bm{\beta}}\tilde{f}\big\rvert_{\mathrm{Lip}(\R^d)},2 \, \big\lVert \mathrm D^{\bm{\beta}}\tilde{f}\big\rVert_{\mathcal{C}(\R^d)}\right\}\\
        & \le \max \left\{ \sqrt{d} \, \frac{(k+{d^\prime})!}{({d^\prime}-1)!} \,  \norm{f}_{\mathcal{C}^{k+1}(\M)},2 \, \frac{(k+{d^\prime}-1)!}{({d^\prime}-1)!} \,  \norm{f}_{\mathcal{C}^{k}(\M)} \right\}\\
        & \le \max \left\{ \sqrt{d}, \frac{2}{k+{d^\prime}}\right\} \, \frac{(k+{d^\prime})!}{({d^\prime}-1)!} \, \norm{f}_{\mathcal{C}^{k+1}(\M)},
    \end{align*}
    where we used that $\norm{f}_{\mathcal{C}^k(\M)} \le \norm{f}_{\mathcal{C}^{k+1}(\M)}$. Since the right hand side is independent of $\bm{\beta}$, it follows that $\tilde{f}$ is $(k,\alpha)$-Hölder continuous with
    \begin{equation}\label{eq:exact_bound}
        \big\lvert \tilde{f} \big\rvert_{\mathcal{C}^{k,\alpha}(\T^d)}  =\sup_{\bm{\beta}\in B^d_k, \abs{\bm{\beta}}=k} \big\lvert \mathrm D^{\bm{\beta}}\tilde{f}\big\rvert_{\mathcal{C}^\alpha(\R^d)} \le \max \left\{ \sqrt{d}, \frac{2}{k+{d^\prime}}\right\} \, \frac{(k+{d^\prime})!}{({d^\prime}-1)!} \, \norm{f}_{\mathcal{C}^{k+1}(\M)}.
    \end{equation}
    We note that $\sqrt{d}\ge {2}/({k+d^\prime})$ holds if $k+d^\prime\geq 2$ and otherwise we have $d=d^\prime=1$ and $k=0$. Thus, \eqref{eq:exact_bound} proves the theorem.
\end{proof}
\begin{thm}\label{thm:hH_bound}
	Let $\M \subset \R^{d^\prime}$ be a smooth embedded submanifold with or without corners that admits a generalized $\phi \colon \T^d \to \M$. For $k \in \N$, $0<\alpha<1$, and $f \in \mathcal{C}^{k,\alpha}(\M)$, the generalized DFS function $\tilde{f}=f \circ \phi$ is in $\mathcal{C}^{k,\alpha}(\T^d)$. If $k+d^\prime\ge2$, we have
	\begin{equation}\label{eq:hH_bound}
		\big\lvert \tilde{f}\big\rvert_{\mathcal{C}^{k,\alpha}(\T^d)} \le 2 \sqrt{d} \, \frac{(k+{d^\prime})!}{({d^\prime}-1)!} \, \norm{f}_{\mathcal{C}^{k,\alpha}(\M)},
	\end{equation}
	and if $k+d^\prime=1$, then $\big\lvert \tilde f\big\rvert_{\mathcal{C}^\alpha(\T^1)} \le \norm{f}_{\mathcal{C}^\alpha(\M)}$.
\end{thm}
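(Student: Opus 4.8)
The plan is to follow the proof of \Cref{thm:hC_bound} up to the point where the top-order derivatives enter, and to replace the Lipschitz estimate used there---which is no longer available, since $f$ is merely $\mathcal C^{k,\alpha}$---by a direct Hölder composition estimate. Fix a $\mathcal C^{k,\alpha}$-extension $f^\ast$ of $f$ on an open set $U\supset\M$ and a multi-index $\bm\beta\in B^d_k$ with $\abs{\bm\beta}=k$; as in \Cref{thm:hC_bound}, $\phi$ is regarded as a map $\R^d\to U$. \Cref{lem:faaDi} applied to $\tilde f=f^\ast\circ\phi$ then gives
\[
\mathrm D^{\bm\beta}\tilde f=\sum_{i=1}^n\pare{\mathrm D^{\bm\gamma_i}f^\ast\circ\phi}\prod_{j=1}^{m_i}\mathrm D^{\bm\mu_{ij}}\phi_{\ell_{ij}},
\]
with the index bounds \eqref{eq:indH2}--\eqref{eq:indH6}; in particular $n\le(k+d^\prime-1)!/(d^\prime-1)!$, $m_i\le k$, and $\abs{\bm\gamma_i}\le k$. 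The membership $\tilde f\in\mathcal C^k(\T^d)$ and the $\mathcal C^k$-bounds follow exactly as in \eqref{eq:C_bound}, so it remains to estimate $\abs{\mathrm D^{\bm\beta}\tilde f}_{\mathcal C^\alpha(\R^d)}$ summand by summand.

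For this I would use the elementary product rule $\abs{uv}_{\mathcal C^\alpha}\le\abs{u}_{\mathcal C^\alpha}\norm{v}_{\mathcal C}+\norm{u}_{\mathcal C}\abs{v}_{\mathcal C^\alpha}$, iterated over the $m_i$ factors. Writing $w_i\coloneqq\mathrm D^{\bm\gamma_i}f^\ast\circ\phi$ and inserting $\norm{\mathrm D^{\bm\mu_{ij}}\phi_{\ell_{ij}}}_{\mathcal C}\le1$ from \eqref{eq:DFSinf} and $\abs{\mathrm D^{\bm\mu_{ij}}\phi_{\ell_{ij}}}_{\mathcal C^\alpha}\le2\sqrt d$ from \eqref{eq:DFSh}, the $i$-th summand acquires the $\mathcal C^\alpha$-seminorm bound $\abs{w_i}_{\mathcal C^\alpha}+2\sqrt d\,m_i\,\norm{w_i}_{\mathcal C}$. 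Since $\abs{\bm\gamma_i}\le k$, the extension seminorm \eqref{eq:C_semi_manifold} yields $\norm{w_i}_{\mathcal C}\le\abs{f^\ast}_{\mathcal C^k(\M)}^\ast$, so the second terms together contribute at most $2\sqrt d\,k\,n\,\abs{f^\ast}_{\mathcal C^k(\M)}^\ast$.

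The decisive step is bounding $\abs{w_i}_{\mathcal C^\alpha(\R^d)}$, and here the two ranges of $\abs{\bm\gamma_i}$ behave differently. If $\abs{\bm\gamma_i}<k$, then $\mathrm D^{\bm\gamma_i}f^\ast$ is $\mathcal C^1$, hence $w_i\in\mathcal C^1(\R^d)$; the chain rule together with \eqref{eq:DFSinf} and \eqref{eq:C_semi_manifold} gives $\norm{\nabla w_i}_{\mathcal C(\R^d)}\le d^\prime\sqrt d\,\abs{f^\ast}_{\mathcal C^k(\M)}^\ast$, and \eqref{eq:lipH2} followed by \eqref{eq:lipH1} turns this into an $\alpha$-Hölder bound, just as in \Cref{thm:hC_bound}. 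If $\abs{\bm\gamma_i}=k$, however, $\mathrm D^{\bm\gamma_i}f^\ast$ is only $\alpha$-Hölder and no Lipschitz reduction is available; this is the main obstacle. Here I would estimate the composition directly: for $\bm x\ne\bm y$ one has $\phi(\bm x),\phi(\bm y)\in\M$, so \eqref{eq:H_semi_manifold} together with $\norm{\nabla\phi_l}\le\sqrt d$ from \eqref{eq:DFSinf} (whence $\mathrm{Lip}(\phi)\le\sqrt{d^\prime d}$) gives
\[
\abs{w_i(\bm x)-w_i(\bm y)}\le\abs{f^\ast}_{\mathcal C^{k,\alpha}(\M)}^\ast\,\norm{\phi(\bm x)-\phi(\bm y)}^\alpha\le\abs{f^\ast}_{\mathcal C^{k,\alpha}(\M)}^\ast\,(\sqrt{d^\prime d})^{\alpha}\,\norm{\bm x-\bm y}^\alpha.
\]
The factor $(\sqrt{d^\prime d})^{\alpha}$ is the price of composing a Hölder function with a Lipschitz one; it is harmless because $0<\alpha<1$ forces $(\sqrt{d^\prime d})^{\alpha}\le\sqrt{d^\prime d}\le d^\prime\sqrt d$. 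Both cases therefore give $\abs{w_i}_{\mathcal C^\alpha(\R^d)}\le2\,d^\prime\sqrt d\,\abs{f^\ast}_{\mathcal C^{k,\alpha}(\M)}^\ast$.

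Summing over the $n\le(k+d^\prime-1)!/(d^\prime-1)!$ summands and combining the two contributions, while using $\abs{f^\ast}_{\mathcal C^k(\M)}^\ast\le\abs{f^\ast}_{\mathcal C^{k,\alpha}(\M)}^\ast$, gives
\[
\abs{\mathrm D^{\bm\beta}\tilde f}_{\mathcal C^\alpha(\R^d)}\le2\sqrt d\,(d^\prime+k)\,n\,\abs{f^\ast}_{\mathcal C^{k,\alpha}(\M)}^\ast\le2\sqrt d\,\frac{(k+d^\prime)!}{(d^\prime-1)!}\,\abs{f^\ast}_{\mathcal C^{k,\alpha}(\M)}^\ast,
\]
where $(d^\prime+k)(k+d^\prime-1)!=(k+d^\prime)!$. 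As this holds for every $\bm\beta$ with $\abs{\bm\beta}=k$ and every $\mathcal C^{k,\alpha}$-extension $f^\ast$, taking the supremum over $\bm\beta$ and the infimum over extensions yields \eqref{eq:hH_bound} through \eqref{eq:H_norm_manifold}. In the boundary case $k+d^\prime=1$ (where necessarily $k=0$ and $d=d^\prime=1$), $\phi\colon\T^1\to\M\subset\R$ satisfies $\mathrm{Lip}(\phi)\le1$ by \eqref{eq:DFSinf}, so the composition estimate above, now free of any dimensional loss, gives $\abs{\tilde f}_{\mathcal C^\alpha(\T^1)}\le\abs{f^\ast}_{\mathcal C^\alpha(\M)}^\ast$, and the infimum over extensions yields $\abs{\tilde f}_{\mathcal C^\alpha(\T^1)}\le\norm{f}_{\mathcal C^\alpha(\M)}$.
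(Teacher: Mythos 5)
Your proof is correct and follows essentially the same route as the paper's: \Cref{lem:faaDi}, a split of the Hölder difference of each summand into the contribution of $\mathrm D^{\bm\gamma_i}f^\ast\circ\phi$ and that of the $\phi$-derivative factors, the direct Hölder-composition estimate for $\abs{\bm\gamma_i}=k$, and the Lipschitz reduction via \Cref{prop:lipH} for $\abs{\bm\gamma_i}<k$, arriving at the same constant $2\sqrt d\,(k+d^\prime)!/(d^\prime-1)!$. The only differences are organizational: you use an iterated product rule where the paper writes out the telescoping sum explicitly, and you prove the intermediate bound \eqref{eq:H_bound} inline rather than as a separate preliminary step.
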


\begin{proof}
	Let $f^\ast \in \mathcal{C}^{k,\alpha}(U)$ be a $\mathcal{C}^{k,\alpha}$-extension of $f$. We first prove 
	\begin{equation}\label{eq:H_bound}
		\abs{\mathrm D^{\bm{\gamma}}f^\ast \circ \phi}_{\mathcal{C}^\alpha(\T^d)}\le \max \left\{ \sqrt{d} \, {d^\prime}, 2\right\} \, \abs{f^\ast}^\ast_{\mathcal{C}^{k,\alpha}(\M)}
	\end{equation}
	for all $\bm{\gamma} \in B^{d^\prime}_k$. For $\abs{\bm{\gamma}}=k$ this follows from the definition of the extension seminorm since we have for all $\bm{x},\bm{y} \in \R^d$ that
		\begin{alignat*}{3}
			& \abs{\pare{\mathrm D^{\bm{\gamma}}f^\ast \circ \phi}(\bm{x})-\pare{\mathrm D^{\bm{\gamma}}f^\ast \circ \phi}(\bm{y})} && \underset{\eqref{eq:H_semi_manifold}}{\le} && \abs{f^\ast}^\ast_{\mathcal{C}^{k,\alpha}(\M)} \, \norm{\phi(\bm{x})-\phi(\bm{y})}^\alpha\\
			& && \le && \abs{f^\ast}^\ast_{\mathcal{C}^{k,\alpha}(\M)} \, \Big(\sum_{\ell=1}^{d^\prime} \abs{\phi_\ell(\bm{x})-\phi_\ell(\bm{y})}^2\Big)^{\frac{\alpha}{2}}\\
			& && \centermathcell{\underset{\eqref{eq:DFSL}}{\le}} && \abs{f^\ast}^\ast_{\mathcal{C}^{k,\alpha}(\M)} \, \pare{ d \, {d^\prime} \, \norm{\bm{x}-\bm{y}}^2}^{\frac{\alpha}{2}}\\
			& && \le && \abs{f^\ast}^\ast_{\mathcal{C}^{k,\alpha}(\M)} \, \sqrt{d} \, {d^\prime} \, \norm{\bm{x}-\bm{y}}^\alpha.
		\end{alignat*}
		If $d=d^\prime=1$ and $k=0$, this shows the claim $\abs{\smash{\tilde f}}_{\mathcal{C}^\alpha(\T^1)} \le \norm{f}_{\mathcal{C}^\alpha(\M)}$ in the case $k+d^\prime=1$.
		For $\abs{\bm{\gamma}}<k$, we have $\mathrm D^{\bm{\gamma}}f^\ast \in \mathcal{C}^1(U)$ and thus $\mathrm D^{\bm{\gamma}}f^\ast$ is a $\mathcal{C}^1$ extension of the restriction $\restr{\pare{\mathrm D^{\bm{\gamma}}f^\ast}}{\M}$ and
	\begin{align*}
		\norm{\restr{\pare{\mathrm D^{\bm{\gamma}}f^\ast}}{\M}}_{\mathcal{C}^1(\M)} & \le \abs{\mathrm D^{\bm{\gamma}}f^\ast}^\ast_{\mathcal{C}^1(\M)}=\max_{\ell \in [{d^\prime}]} \big\lVert \mathrm D^{\bm{e}^\ell}\pare{\mathrm D^{\bm{\gamma}}f^\ast}\big\rVert_{\mathcal{C}(\M)}\\
		& \le \max_{\tilde{\bm{\gamma}}\in B^{d^\prime}_k} \big\lVert \mathrm D^{\tilde{\bm{\gamma}}}f^\ast\big\rVert_{\mathcal{C}(\M)}=\abs{f^\ast}_{\mathcal{C}^k(\M)}^\ast \le \abs{f^\ast}^\ast_{\mathcal{C}^{k,\alpha}(\M)}.
	\end{align*}
	By \eqref{eq:exact_bound}, this implies
	\begin{equation*}
		\abs{\mathrm D^{\bm{\gamma}}f^\ast \circ \phi}_{\mathcal{C}^\alpha(\T^d)}\le  \max \left\{ \sqrt{d}, \frac{2}{{d^\prime}}\right\} \, \frac{{d^\prime}!}{({d^\prime}-1)!} \, \abs{f^\ast}^\ast_{\mathcal{C}^{k,\alpha}(\M)} 
    ,
	\end{equation*}
	which shows \eqref{eq:H_bound}. As in the proof of \Cref{thm:hC_bound}, we apply \Cref{lem:faaDi} to conclude that $\tilde{f} \in \mathcal{C}^k(\T^d)$ and for $\bm{\beta} \in B_k^d$ we obtain 
	\begin{equation*}
		\mathrm D^{\bm{\beta}}\tilde{f}=\mathrm D^{\bm{\beta}}\pare{f^\ast \circ \phi}=\sum_{i=1}^n \pare{\mathrm D^{\bm{\gamma}_i}f^\ast \circ \phi} \, \prod_{j=1}^{m_i} \mathrm D^{\bm{\mu}_{ij}}\phi_{\ell_{ij}}
	\end{equation*}
	for some constants satisfying \eqref{eq:indH2} to \eqref{eq:indH6}. For $\bm{x},\bm{y}\in \R^d$, we apply the triangle inequality and get
	\begin{align*}
		\big\lvert \mathrm D^{\bm{\beta}}\tilde{f}(\bm{x})-\mathrm D^{\bm{\beta}}\tilde{f}(\bm{y})\big\rvert\le & \underbrace{\sum_{i=1}^n \big\lvert \pare{\mathrm D^{\bm{\gamma}_i}f^\ast \circ \phi}(\bm{x})-\pare{\mathrm D^{\bm{\gamma}_i}f^\ast \circ \phi}(\bm{y})\big\rvert \, \prod_{j=1}^{m_i} \abs{\mathrm D^{\bm{\mu}_{ij}}\phi_{\ell_{ij}}(\bm{x})}}_{\eqqcolon A}\\
		& +\underbrace{\sum_{i=1}^n \big\lvert \pare{\mathrm D^{\bm{\gamma}_i}f^\ast \circ \phi}(\bm{y})\big\rvert \, \Big\rvert\prod_{j=1}^{m_i} \mathrm D^{\bm{\mu}_{ij}}\phi_{\ell_{ij}}(\bm{x})-\prod_{j=1}^{m_i} \mathrm D^{\bm{\mu}_{ij}}\phi_{\ell_{ij}}(\bm{y})\Big\lvert}_{\eqqcolon B}.
	\end{align*}	
	The first sum can be estimated as
	\begin{alignat*}{3}
		& A && \centermathcell{\underset{\eqref{eq:DFSinf}}{\le}} && \sum_{i=1}^n \big\lvert \pare{\mathrm D^{\bm{\gamma}_i}f^* \circ \phi}(\bm{x}) - \pare{\mathrm D^{\bm{\gamma}_i}f^* \circ \phi}(\bm{y})\big\rvert 
		\underset{\eqref{eq:H_bound}}{\le} \sum_{i=1}^n \max \left\{ \sqrt{d} \, {d^\prime}, 2\right\} \, \abs{f^\ast}^\ast_{\mathcal{C}^{k,\alpha}(\M)} \norm{\bm{x}-\bm{y}}^\alpha\\
		& && \centermathcell{\underset{\eqref{eq:indH2}}{\le}} && \frac{(k+{d^\prime}-1)!}{({d^\prime}-1)!} \, \max \left\{ \sqrt{d} \, {d^\prime}, 2\right\} \abs{f^\ast}^\ast_{\mathcal{C}^{k,\alpha}(\M)}
    \norm{\bm{x}-\bm{y}}^\alpha.
	\end{alignat*}
	Furthermore, we use \eqref{eq:H_semi_manifold} to bound the second sum by 
	\begin{equation*}
		B \le \sum_{i=1}^n \abs{f^\ast}^\ast_{\mathcal{C}^{k,\alpha}(\M)} \, \Big\lvert \prod_{j=1}^{m_i} \mathrm D^{\bm{\mu}_{ij}}\phi_{\ell_{ij}}(\bm{x})-\prod_{j=1}^{m_i} \mathrm D^{\bm{\mu}_{ij}}\phi_{\ell_{ij}}(\bm{y})\Big\rvert.
	\end{equation*}
	By a telescoping sum, we rewrite the last sum as
	\begin{align*}
		B & \le \sum_{i=1}^n \abs{f^\ast}^\ast_{\mathcal{C}^{k,\alpha}(\M)}\, \Big\lvert\sum_{r=1}^{m_i} \big(\mathrm D^{\bm{\mu}_{ir}}\phi_{\ell_{ir}}(\bm{x})-\mathrm D^{\bm{\mu}_{ir}}\phi_{\ell_{ir}}(\bm{y})\big)  \prod_{j=1}^{r-1} \mathrm D^{\bm{\mu}_{ij}}\phi_{\ell_{ij}}(\bm{x}) \prod_{j=r+1}^{m_i} \mathrm D^{\bm{\mu}_{ij}}\phi_{\ell_{ij}}(\bm{y})\Big\rvert.
	\end{align*}
	Thus, we have
	\begin{alignat*}{3}
		& B && \centermathcell{\underset{\eqref{eq:DFSinf}}{\le}} && \sum_{i=1}^n \sum_{r=1}^{m_i} \abs{f^\ast}^\ast_{\mathcal{C}^{k,\alpha}(\M)} \, \abs{\mathrm D^{\bm{\mu}_{ir}}\phi_{\ell_{ir}}(\bm{x})-\mathrm D^{\bm{\mu}_{ir}}\phi_{\ell_{ir}}(\bm{y})}\\
		& && \centermathcell{\underset{\eqref{eq:DFSh}}{\le}} && \sum_{i=1}^n \sum_{r=1}^{m_i} \abs{f^\ast}^\ast_{\mathcal{C}^{k,\alpha}(\M)} \, 2 \sqrt{d} \, \norm{\bm{x}-\bm{y}}^\alpha.
	\end{alignat*}
	Finally, the bounds \eqref{eq:indH2} and \eqref{eq:indH4} yield
	\begin{equation*}
		B\le 
		\frac{(k+{d^\prime}-1)!}{({d^\prime}-1)!} \, 2k  \sqrt{d} \, \abs{f^\ast}^\ast_{\mathcal{C}^{k,\alpha}(\M)} \, \norm{\bm{x}-\bm{y}}^\alpha.
	\end{equation*}
    We combine the estimates on $A$ and $B$ to obtain
    \begin{align*}
        \big\lvert \mathrm D^{\bm{\beta}} \tilde{f}(\bm{x})-\mathrm D^{\bm{\beta}} \tilde{f}(\bm{y})\big\rvert & \le \frac{(k+{d^\prime}-1)!}{({d^\prime}-1)!} \, \pare{2k  \sqrt{d}+\max \left\{ \sqrt{d} \, {d^\prime}, 2\right\}}
          \abs{f^*}_{\mathcal{C}^{k,\alpha}(\M)}^* \, \norm{\bm{x}-\bm{y}}^\alpha\\
        & \le \frac{(k+{d^\prime})!}{({d^\prime}-1)!} \, 2\sqrt{d} \, 
          \abs{f^*}_{\mathcal{C}^{k,\alpha}(\M)}^* \, \norm{\bm{x}-\bm{y}}^\alpha    .
    \end{align*}
    The last equation holds for arbitrary $\bm{x},\bm{y} \in \R^d$ and $\bm{\beta} \in B^d_k$, thus we have
    \begin{equation*}
    	\big\lvert \tilde{f} \big\rvert_{\mathcal{C}^{k,\alpha}(\T^d)} \le 2  \sqrt{d} \, \frac{(k+{d^\prime})!}{({d^\prime}-1)!}  \, \abs{f^*}_{\mathcal{C}^{k,\alpha}(\M)}^*. 
    \end{equation*}
	Since this holds independently of the choice of $\mathcal{C}^{k,\alpha}$-extension $f^\ast$ of $f$, we can take the infimum over all such extensions. By the definition of the $\mathcal{C}^{k,\alpha}(\M)$-norm in \eqref{eq:H_norm_manifold}, this yields \eqref{eq:hH_bound}.
\end{proof}

\begin{rem}
	For the special case of the sphere $\M = \S^2$, \cite[thm.\ 4.3]{MilQue22} states that
	$
		 \abs{\smash{\tilde f}}_{\mathcal{C}^{k,\alpha}(\T^2)} \le (k+3)! \norm{f}_{\mathcal{C}^{k+1}(\S^2)}
  $
  for 
  $
  f \in \mathcal{C}^{k+1}(\S^2).
	$
	The corresponding result \eqref{eq:hC_bound} in this paper improves the estimate by the factor $\sqrt{2}$. On the other hand \cite[thm.\ 4.5]{MilQue22} states that
	$
		\abs{\smash{\tilde f}}_{\mathcal{C}^{k,\alpha}(\T^2)} \le (k+3)! \norm{f}_{\mathcal{C}^{k,\alpha}(\S^2)}
  $ 
  for
  $ 
    f\in \mathcal{C}^{k,\alpha}(\S^2).
	$
	Comparing this with \eqref{eq:hH_bound} for the sphere, we observe that the new general estimate is larger by a factor $\sqrt{2}$. This is due to \eqref{eq:DFSh} not being optimal for the spherical DFS transform,  in fact \cite[lem.\ 4.2]{MilQue22} proves that the respective estimate in this special case.
\end{rem}
\section{Series expansions with the DFS method}\label{sec:series}

We propose a series representation of functions with the generalized DFS method. In \Cref{sec:Fourier}, we explore properties of the Fourier series of DFS functions and define an analogue series expansion on the manifold.
In \Cref{sec:convergence}, we combine our findings from \Cref{thm:hC_bound,thm:hH_bound} with results from multi-dimensional Fourier analysis to show pointwise and uniform convergence of the Fourier series of DFS functions.

Throughout this section, let $\M \subset \R^{d^\prime}$ be an $d$-dimensional smooth embedded submanifold with or without corners that admits a generalized DFS transform $\phi \colon \T^d \to \M$. We write ${\tilde{f}=f \circ \phi \colon \T^d \to \C}$ for the generalized DFS function of a function $f \colon \M \to \C$.

\subsection{Fourier series and the DFS method}\label{sec:Fourier}

Let $L_2(\T^d)$ denote the Hilbert space of square-integrable complex-valued functions on $\T^d$ that is equipped with the inner product
\begin{equation*}
	\inn{g,h}_{L_2(\T^d)} \coloneqq (2\pi)^{-d} \int_{[-\pi,\pi]^d} g(\bm x) \, \overline{h(\bm x)} \d \bm x, \quad g,h \in L_2(\T^d)
\end{equation*}
and the induced norm $\norm{g}_{L_2(\T^d)} \coloneqq \inn{g,g}^{1/2}_{L^2(\T^d)}$. As the complex exponentials $\e^{\im \inn{\bm{n},\cdot}}$, $\bm{n}\in\Z^d$, form an orthonormal basis therein, we define the respective Fourier expansion.

\begin{defin}\label{defin:F_torus}
    Let $g \in L_2 (\T^d)$ and $\bm{n} \in \mathbb{Z}^d$. We define the \emph{$\bm{n}$-th Fourier coefficient} of $g$ by
    \begin{equation*}
        c_{\bm{n}}(g) \coloneqq \inn{g,\e^{\im \inn{\bm n,\cdot}}}_{L_2(\T^d)}=
        (2\pi)^{-d} \int_{[-\pi,\pi]^d} g(\bm{x})\, \e^{-\im \inn{\bm{n},\bm{x}}} \, \d\bm{x}.
    \end{equation*}
    Let $\Omega_h, \, h \in \N$, be an expanding sequence of bounded sets that exhausts $\Z^d$.
    We define the \emph{$h$-th partial Fourier sum} of $g$ by
    \begin{equation*}
        \mathrm F_{\Omega_h} g(\bm{x}) \coloneqq \sum_{\bm{n} \in \Omega_h} c_{\bm{n}}(g)\, \e^{\im \inn{\bm{n},\bm{x}}},\qquad \bm{x} \in \T^d,
    \end{equation*}
    and the \emph{Fourier series} of $g$ by $ \mathrm F g \coloneqq \lim_{h\to\infty} \mathrm F_{\Omega_h} g $. This limit is well-defined in $L_2(\T^d)$ for any choice of expanding sequence and we have $\mathrm Fg=g$ in $L_2(\T^d)$. We call a multi-series $\sum_{\bm{n}\in \Z^d} a_{\bm{n}}$ convergent whenever for all expanding sequences $\Omega_h, \, h\in \mathbb{N}$, of bounded sets exhausting $\Z^d$ the partial sums $\sum_{\bm{n} \in \Omega_h} a_{\bm{n}}$ converge as $h \to \infty$, cf.\ \cite[p.~6]{KhaNik92}.
\end{defin}

The generalized DFS method represents a function $f \colon \M \to \C$ via the Fourier series of its DFS function $\tilde f \colon \T^d \to \C$, i.e.,
\begin{equation}\label{eq:DFSseries1}
  \mathrm F \tilde f(\bm{x}) = 
  \sum_{\bm{n}\in\Z^d} c_{\bm{n}}(\tilde f)\, \e^{\im \inn{\bm{n},\bm x}}
  ,\qquad \bm x\in\T^d.
\end{equation} 
Ultimately, we are interested in representing the function $f$, not its DFS function $\tilde f$. 
Thus, the question whether we can relate the Fourier series \eqref{eq:DFSseries1} to a series defined on $\M$ arises naturally. Choosing $D_1$ and $D_2$ as in \Cref{defin:DFS},  \ref{Prop2}, we observe that the restriction of $\phi$ to $D_1 \cup D_2$ is bijective, so we could just apply its inverse to the basis functions $\e^{\im \inn{\bm n,\cdot}}$, $\bm n \in \Z^d$. However, this would yield some redundancies in the expansion
since $\e^{\im \inn{\bm n,\cdot}}$ is in general not a BMC function, cf.\ \Cref{lem:BMC}.
We account for this by defining an orthogonal basis of BMC functions in $L^2(\T^d)$ consisting of suitable linear combinations of the functions $\e^{\im \inn{\bm n,\cdot}}$. To this end, we will also require a suitable subset of the indices in $\Z^d$.
This way, we can obtain a respective basis on the manifold $\M$ in the following.

From \Cref{defin:DFS}, we recall the symmetry number $p$, the shift vectors $\mathrm S^i$, the reflection maps $\mathrm M^i$, and the symmetry functions $s^i$ defined in \eqref{eq:si}. Note that the functions $\mathrm M^i$ are well-defined on $\Z^d$ by the matrix multiplication in \eqref{eq:Mi}. For $\Omega \subset \Z^{d}$, we set
\begin{equation*}
	\mathcal M(\Omega)\coloneqq \left\{\bm n \in \Z^d \mid \mathrm M^I(\bm n)=\pare{\circ_{i \in I} \mathrm M^i}(\bm n) \in \Omega \text{ for some } I \subset [p] \right\},
\end{equation*}
and $\mathcal M(\bm n) \coloneqq \mathcal M(\{\bm n\})$ for $\bm n\in\Z^d$. For $I\subset [p]$, we introduce the map
\begin{equation*}
	\mathrm{N}^I \colon \Z^d \to \Z, \, \mathrm N^I(\bm n) \coloneqq \sum_{i\in I} \sum_{j \in [d], \, \mathrm S^i_j=\pi} n_j
\end{equation*}
and we write
$\mathrm{N}^i(\bm n) \coloneqq \mathrm{N}^{\{i\}}(\bm n)$. For $\bm n \in \Z^d$, we define 
\begin{equation}\label{eq:r_n,n}
	r_{\bm n,\bm n} \coloneqq \begin{cases}
		1, & \mathrm N^I(\bm n) \text{ is even for all } I \subset [p] \text{ with } \mathrm M^I(\bm n)=\bm n\\
		0, &  \mathrm N^I(\bm n) \text{ is odd for some } I \subset [p] \text{ with } \mathrm M^I(\bm n)=\bm n.
	\end{cases}
\end{equation}
Note that if the reflections $\mathrm M^i$ act on pairwise disjoint sets of variables, then for any $I \subset [p]$, we have $\mathrm M^I(\bm n)=\bm n$ if and only if $\mathrm M^i(\bm n)=\bm n$ for all $i \in I$. In this situation, we therefore only need to check the parity of $\mathrm N^i(\bm n)$ for all $i \in [p]$ to determine $r_{\bm n,\bm n}$. For $J \subset [p]$ and $\bm m \coloneqq \mathrm M^J(\bm n)$, we define
\begin{equation}\label{eq:r_n,m}
	r_{\bm n,\bm m} \coloneqq (-1)^{\mathrm N^J(\bm n)} \, r_{\bm n,\bm n}.
\end{equation}
The next lemma will show that $r_{\bm n,\bm m}$ is thus well-defined for $\bm m \in \mathcal{M}(\bm n)$.  For $I, J \subset [p]$, we use $\#I$ to denote the cardinality of $I$ and we define the symmetric difference
\begin{equation*}
	I \Delta J \coloneqq \pare{I \setminus J} \dot{\cup} \pare{J \setminus I}.
\end{equation*}

\begin{lem}
	Let $\bm n \in \Z^d$ and $\bm m \in \mathcal{M}(\bm n)$. It holds that
	\begin{equation}\label{eq:cardinal}
    	\bm n_{\#}\coloneqq \#\{I \subset [p]\mid \mathrm M^I(\bm n)=\bm n\}=\#\{I \subset [p]\mid \mathrm M^I(\bm n)=\bm m\}
	\end{equation}
	and we have
	\begin{equation}\label{eq:r_formula}
		r_{\bm n,\bm m}= \frac{1}{\bm n_\#} \sum_{\substack{I \subset [p],\\ \mathrm M^{I}(\bm n)=\bm m}} (-1)^{\mathrm N^I(\bm n)}
    .
	\end{equation}
	In particular $r_{\bm n,\bm m}$ is well-defined by \eqref{eq:r_n,n} and \eqref{eq:r_n,m}. Furthermore, we have for $I,J \subset [p]$ that
	\begin{equation}\label{eq:N_sum}
			(-1)^{\mathrm N^I(\bm n)+\mathrm N^J(\bm n)}=(-1)^{\mathrm N^{I\Delta J}(\bm n)}
	\end{equation}
	and
  	\begin{equation}\label{eq:M_sum}
  		\mathrm M^I \left(\mathrm M^J(\bm n)\right) = \mathrm M^{I\Delta J}(\bm n).
  	\end{equation}
\end{lem}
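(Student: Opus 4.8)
The plan is to read the entire statement through the lens of the finite group $G \coloneqq (\mathcal{P}([p]),\Delta) \cong (\Z/2\Z)^p$, identifying a subset $I \subseteq [p]$ with a group element, under which both $I \mapsto \mathrm M^I$ and $I \mapsto (-1)^{\mathrm N^I(\bm n)}$ become homomorphisms. First I would dispatch the two algebraic identities \eqref{eq:M_sum} and \eqref{eq:N_sum}, since everything else rests on them. For \eqref{eq:M_sum}, the maps $\mathrm M^i$ are commuting involutions (diagonal $\pm1$ matrices), so in $\mathrm M^I \circ \mathrm M^J = \prod_{i\in I}\mathrm M^i\,\prod_{j\in J}\mathrm M^j$ every index in $I\cap J$ occurs twice and cancels via $(\mathrm M^i)^2=\mathrm{Id}$, leaving exactly $\prod_{i\in I\Delta J}\mathrm M^i=\mathrm M^{I\Delta J}$. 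For \eqref{eq:N_sum}, I would use that $\mathrm N^I(\bm n)=\sum_{i\in I}\mathrm N^i(\bm n)$ is additive in $I$, whence $\mathrm N^I(\bm n)+\mathrm N^J(\bm n)=\mathrm N^{I\Delta J}(\bm n)+2\sum_{i\in I\cap J}\mathrm N^i(\bm n)$, and the doubled term disappears after applying $(-1)^{(\cdot)}$.

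Next I would treat the stabilizer $H_{\bm n}\coloneqq\{I\subseteq[p]:\mathrm M^I(\bm n)=\bm n\}$. By \eqref{eq:M_sum} it is closed under $\Delta$ and contains $\emptyset$, hence is a subgroup of $G$ with $\#H_{\bm n}=\bm n_\#$. For $\bm m=\mathrm M^J(\bm n)\in\mathcal M(\bm n)$, applying $\mathrm M^J$ to $\mathrm M^I(\bm n)=\mathrm M^J(\bm n)$ and invoking \eqref{eq:M_sum} gives $\mathrm M^I(\bm n)=\bm m \iff I\Delta J\in H_{\bm n}\iff I\in J\Delta H_{\bm n}$. Thus the preimage of $\bm m$ is precisely the coset $J\Delta H_{\bm n}$, which has the same cardinality as $H_{\bm n}$; this is \eqref{eq:cardinal}.

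Finally, for the formula \eqref{eq:r_formula} together with well-definedness, I would observe that by \eqref{eq:N_sum} the restriction $\chi\colon H_{\bm n}\to\{\pm1\}$, $\chi(K)=(-1)^{\mathrm N^K(\bm n)}$, is a group character, and that $r_{\bm n,\bm n}=1$ holds exactly when $\chi\equiv1$ (definition \eqref{eq:r_n,n}). Writing the right-hand side of \eqref{eq:r_formula} as a sum over the coset $J\Delta H_{\bm n}$ and factoring out $(-1)^{\mathrm N^J(\bm n)}$ via \eqref{eq:N_sum} reduces it to the character sum $\sum_{K\in H_{\bm n}}\chi(K)$. This sum equals $\#H_{\bm n}=\bm n_\#$ when $\chi$ is trivial and $0$ otherwise, the latter by pairing $K$ with $K\Delta K_0$ for any fixed $K_0$ with $\chi(K_0)=-1$, which flips the sign and cancels the terms in pairs. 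In the trivial case \eqref{eq:r_formula} collapses to $(-1)^{\mathrm N^J(\bm n)}=(-1)^{\mathrm N^J(\bm n)}r_{\bm n,\bm n}$, and in the non-trivial case to $0=0\cdot r_{\bm n,\bm n}$, matching the definition \eqref{eq:r_n,m} in both branches. Since the right-hand side of \eqref{eq:r_formula} makes no reference to the choice of $J$, this simultaneously establishes that $r_{\bm n,\bm m}$ is well-defined. The one genuine subtlety, and the step I would be most careful about, is exactly this vanishing of the non-trivial character sum and its alignment with the $r_{\bm n,\bm n}=0$ branch, since that is what guarantees consistency of \eqref{eq:r_n,n}--\eqref{eq:r_n,m} across all $J$ with $\mathrm M^J(\bm n)=\bm m$.
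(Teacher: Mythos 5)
Your proof is correct and is essentially the paper's argument repackaged in the language of subgroups, cosets, and characters of $(\Z/2\Z)^p$: the coset bijection $I\mapsto I\Delta J$ is exactly the paper's self-inverse map proving \eqref{eq:cardinal}, and your vanishing of the nontrivial character sum via the pairing $K\mapsto K\Delta K_0$ is the paper's averaging trick with the factor $1+(-1)^{\mathrm N^J(\bm n)}=0$. The group-theoretic framing is a clean way to organize the same symmetric-difference manipulations, but no step differs in substance.
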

\begin{proof}
	For $I \subset [p]$ and $\ell \in [p]$, we have 
	\begin{align*}
		(-1)^{\mathrm N^I(\bm n)+\mathrm N^\ell(\bm n)} & =(-1)^{\mathrm N^I(\bm n)\pm \mathrm N^\ell(\bm n)} =\begin{cases}
				(-1)^{\mathrm N^{I \cup \{\ell\}}(\bm n)}, & \ell \not\in I\\
				(-1)^{\mathrm N^{I \setminus \{\ell\}}(\bm n)}, & \ell \in I.
			\end{cases}
	\end{align*}
	This inductively yields \eqref{eq:N_sum}. Analogously, we obtain \eqref{eq:M_sum} from
	\begin{equation*}
  		\mathrm M^I \left(\mathrm M^\ell(\bm n)\right) =\begin{cases} \pare{\circ_{i \in I \cup \{\ell\}}\mathrm M^i}(\bm n), & \ell \not\in I\\    \pare{\circ_{i\in I \setminus \{\ell\}} \mathrm M^i}(\bm n), & \ell \in I,\end{cases}
  	\end{equation*}
  	where we used that the reflections $\mathrm M^i$ 
  	are self-inverse and commute. By \eqref{eq:M_sum}, we observe that the map
	\begin{align*}
		\{I \subset [p] \mid \mathrm M^I(\bm n)=\bm n\} &\to \{I \subset [p] \mid \mathrm M^I(\bm n)=\bm m\}\\
		I & \mapsto I \Delta J
	\end{align*}
	is self-inverse. In particular, this proves \eqref{eq:cardinal}.
 
  	Next, we show \eqref{eq:r_formula} for the case $\bm m=\bm n$. If $\mathrm N^J(\bm n)$ is odd for some $J \subset [p]$ with $\mathrm N^J(\bm n)=\bm n$, then
  	\begin{alignat*}{3}
		& \frac{1}{\bm n_{\#}}\sum_{\substack{I \subset [p],\\ \mathrm M^I(\bm n)=\bm n}}(-1)^{\mathrm N^I(\bm n)}
		&&\centermathcell{=} && \frac{1}{2 \bm n_{\#}}\sum_{\substack{I \subset [p],\\ \mathrm M^I(\bm n)=\bm n}}(-1)^{\mathrm N^I(\bm n)} + (-1)^{\mathrm N^{I\Delta J}(\bm n)}\\
		& &&\centermathcell{\underset{\eqref{eq:N_sum}}{=} \,} && \frac{1}{2\bm n_{\#}} \sum_{\substack{I \subset [p],\\ \mathrm M^I(\bm n)=\bm n}}\underbrace{(1+(-1)^{\mathrm N^J(\bm n)})}_{=0}(-1)^{\mathrm N^I(\bm n)}=0 \underset{\eqref{eq:r_n,n}}{=} r_{\bm n,\bm n}.
	\end{alignat*}
	On the other hand, if $\mathrm N^I(\bm n)$ is even for all $I\subset [p]$ with $\mathrm M^I(\bm n)=\bm n$, then 
	\begin{equation*}
		\frac{1}{\bm n_{\#}} \sum_{\substack{I \subset [p],\\ \mathrm M^I(\bm n)=\bm n}} \underbrace{(-1)^{\mathrm N^I(\bm n)}}_{=1}=\frac{\bm n_{\#}}{\bm n_{\#}}=1 \underset{\eqref{eq:r_n,n}}{=} r_{\bm n,\bm n}.
	\end{equation*}
  	Now, let $\bm m \in \mathcal{M}(\bm n)$ be arbitrary. For $J \subset [p]$ with $\mathrm M^J(\bm n)=\bm m$, we obtain \eqref{eq:r_formula} by
  	\begin{alignat*}{3}
		& (-1)^{\mathrm N^J(\bm n)} \, r_{\bm n,\bm n} && \centermathcell{=} && (-1)^{\mathrm N^J(\bm n)} \frac{1}{\bm n_{\#}} \sum_{\substack{I \subset [p],\\ \mathrm M^I(\bm n)=\bm n}} (-1)^{\mathrm N^I(\bm n)}\\
		& && \centermathcell{\underset{\eqref{eq:N_sum}}{=}\, } && \frac{1}{\bm n_{\#}}\sum_{\substack{I \subset [p],\\ \mathrm M^I(\bm n)=\bm n}}(-1)^{\mathrm N^{I\Delta J}(\bm n)} = \frac{1}{\bm n_{\#}}\sum_{\substack{I \subset [p],\\ \mathrm M^I(\bm n)=\bm m}}(-1)^{\mathrm N^I(\bm n)}.
	\end{alignat*}
  	Note that the right hand side is independent of $J$. This proves that $r_{\bm n,\bm m}$ is well-defined.
\end{proof}

To construct a BMC basis, we choose an index set $\Omega_\phi \subset \Z^d$ that satisfies both 
\begin{equation}\label{eq:redundancy}
	\mathcal M(\bm n)\cap \mathcal M(\bm m)=\emptyset \text{ for all } \bm n,\bm m \in \Omega_\phi \text{ with }\bm n \not=\bm m
\end{equation} 
and
\begin{equation}\label{eq:non-zero}
	\mathcal M(\Omega_\phi)=\{\bm n \in \Z^d\mid r_{\bm n,\bm n}\not=0\}.
\end{equation}
The next theorem {shows the existence of} such an $\Omega_\phi$ and gives a BMC basis.

\begin{thm}\label{lem:Fourier_series}
	There exists a set $\Omega_\phi \subset \Z^d$ that fulfills \eqref{eq:redundancy} and \eqref{eq:non-zero}. For $\bm n \in \Omega_\phi$, the function
	\begin{equation}\label{eq:en}
		e_{\bm n}(\bm x)\coloneqq \sum_{\bm m \in \mathcal M(\bm n)} r_{\bm n,\bm m} \, \e^{\im \inn{\bm m,\bm x}}, \qquad \bm x \in \T^d,
	\end{equation}
	is a BMC function with
	\begin{equation*}
		\norm{e_{\bm n}}_{L_2(\T^d)}^2=
			\# \mathcal M(\bm n)\le 2^p. 
	\end{equation*}
	Furthermore, the family $e_{\bm n}$, $\bm n \in \Omega_\phi$, is an orthogonal basis of the subspace of BMC functions in $L_2(\T^d)$. For any BMC function $g \in L_2(\T^d)$ and any finite $\Omega \subset \Omega_\phi$, we have
	\begin{equation}\label{eq:series_sym}
		\mathrm F_{\mathcal M(\Omega)}g(\bm x)=\sum_{\bm n \in \mathcal M(\Omega)} c_{\bm n}(g) \, \e^{\im \inn{\bm n,\bm x}}=\sum_{\bm n \in \Omega} c_{\bm n}(g) \, e_{\bm n}(\bm x), \quad \bm x \in \T^d.
	\end{equation}
\end{thm}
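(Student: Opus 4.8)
The plan is to translate the defining invariance of BMC functions into a single relation among Fourier coefficients and then read off every assertion from it. First I would establish the crucial transformation rule: for $g\in L_2(\T^d)$, $I\subset[p]$, and $\bm n\in\Z^d$,
\[
	c_{\bm n}(g\circ s^I)=(-1)^{\mathrm N^I(\bm n)}\,c_{\mathrm M^I(\bm n)}(g).
\]
This comes from the substitution $\bm y=\mathrm S^I+\mathrm M^I\bm x$ in the integral defining $c_{\bm n}$: since $\mathrm M^I$ is diagonal with entries $\pm1$ it is symmetric and self-inverse, so the Jacobian is $1$ and $\inn{\bm n,\bm x}=\inn{\mathrm M^I\bm n,\bm y}-\inn{\mathrm M^I\bm n,\mathrm S^I}$. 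The resulting phase $\e^{\im\inn{\mathrm M^I\bm n,\mathrm S^I}}$ equals $(-1)^{\mathrm N^I(\bm n)}$, because $(\mathrm M^I\bm n)_j\equiv n_j\pmod2$ while $\mathrm S^I$ contributes a factor $\pi$ exactly on the coordinates counted by $\mathrm N^I$. Consequently $g$ is a BMC function if and only if $c_{\mathrm M^I(\bm n)}(g)=(-1)^{\mathrm N^I(\bm n)}c_{\bm n}(g)$ for all $\bm n\in\Z^d$ and $I\subset[p]$.

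From this relation the frequency structure of BMC functions is immediate. Along the orbit $\mathcal M(\bm n)=\{\mathrm M^I(\bm n)\mid I\subset[p]\}$ (an orbit since each $\mathrm M^I$ is self-inverse), all coefficients are pinned down by $c_{\bm n}(g)$; choosing $I$ with $\mathrm M^I(\bm n)=\bm n$ forces $c_{\bm n}(g)=(-1)^{\mathrm N^I(\bm n)}c_{\bm n}(g)$, so $c_{\bm n}(g)=0$ whenever $r_{\bm n,\bm n}=0$. Thus a BMC function is frequency-supported in $A\coloneqq\{\bm n\mid r_{\bm n,\bm n}\neq0\}$, which is \eqref{eq:non-zero}. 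I would then check that $r_{\bm n,\bm n}$ is constant on orbits: for $\bm m=\mathrm M^J(\bm n)$ one has $(-1)^{\mathrm N^I(\bm m)}=(-1)^{\mathrm N^I(\bm n)}$ for every $I$ (again $(\mathrm M^J\bm n)_j\equiv n_j\pmod2$), and combined with the equality of stabilizers $\{I\mid\mathrm M^I(\bm m)=\bm m\}=\{I\mid\mathrm M^I(\bm n)=\bm n\}$ from \eqref{eq:M_sum} and of cardinalities \eqref{eq:cardinal}, formula \eqref{eq:r_formula} yields $r_{\bm m,\bm m}=r_{\bm n,\bm n}$. Hence $A$ is a union of orbits, and selecting one representative from each orbit in $A$ (say the lexicographically smallest element, as orbits are finite) produces $\Omega_\phi$ satisfying the disjointness \eqref{eq:redundancy} and the covering \eqref{eq:non-zero}.

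It remains to verify the basis properties. I would read off $c_{\bm m}(e_{\bm n})=r_{\bm n,\bm m}$ for $\bm m\in\mathcal M(\bm n)$ and $0$ otherwise, and check the BMC criterion for $e_{\bm n}$ directly: writing $\bm m=\mathrm M^J(\bm n)$, the required identity $r_{\bm n,\mathrm M^I(\bm m)}=(-1)^{\mathrm N^I(\bm m)}r_{\bm n,\bm m}$ reduces, via \eqref{eq:M_sum}, \eqref{eq:r_n,m} and \eqref{eq:N_sum}, to the already-noted fact $(-1)^{\mathrm N^I(\bm m)}=(-1)^{\mathrm N^I(\bm n)}$. Since $r_{\bm n,\bm n}=1$ for $\bm n\in\Omega_\phi$, we get $\abs{r_{\bm n,\bm m}}=1$ for each $\bm m\in\mathcal M(\bm n)$, so Parseval gives $\norm{e_{\bm n}}_{L_2(\T^d)}^2=\#\mathcal M(\bm n)\le2^p$, the bound from $\#\{\mathrm M^I(\bm n)\mid I\subset[p]\}\le2^p$. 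Distinct $e_{\bm n}$ are orthogonal because their supports $\mathcal M(\bm n)$ are disjoint by \eqref{eq:redundancy}. For completeness and \eqref{eq:series_sym}, I would take an arbitrary BMC $g$, use the coefficient relation to write $c_{\bm m}(g)=r_{\bm n,\bm m}\,c_{\bm n}(g)$ for the unique $\bm n\in\Omega_\phi$ with $\bm m\in\mathcal M(\bm n)$, and regroup the partial sum $\mathrm F_{\mathcal M(\Omega)}g$ over the orbits $\mathcal M(\bm n)$, $\bm n\in\Omega$, to obtain $\sum_{\bm n\in\Omega}c_{\bm n}(g)e_{\bm n}$; letting $\Omega$ exhaust $\Omega_\phi$ shows the $e_{\bm n}$ span the BMC subspace in $L_2(\T^d)$.

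The main obstacle is the first step: pinning down the phase in the Fourier-coefficient transformation and matching its parity precisely to $\mathrm N^I$, thereby linking the geometric symmetry $\phi\circ s^I=\phi$ to the arithmetic of the coefficients $r_{\bm n,\bm m}$. Once this dictionary is in place, every remaining claim is bookkeeping with \eqref{eq:M_sum}, \eqref{eq:N_sum}, and \eqref{eq:r_formula}.
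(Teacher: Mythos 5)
Your proposal is correct and follows essentially the same route as the paper: your central identity $c_{\bm n}(g\circ s^I)=(-1)^{\mathrm N^I(\bm n)}\,c_{\mathrm M^I(\bm n)}(g)$ is exactly the paper's \eqref{eq:coeff_sym} (derived there from \eqref{eq:exp_id}), and the norm, orthogonality, regrouping, and completeness arguments all coincide in substance. The only harmless divergences are cosmetic: you build $\Omega_\phi$ abstractly by choosing one representative per orbit of the group $\{\mathrm M^I \mid I\subset[p]\}$ inside $\{\bm n \mid r_{\bm n,\bm n}\neq 0\}$ (using that $r_{\bm n,\bm n}$ is constant on orbits) rather than via the paper's explicit Cartesian-product construction, you verify the BMC property of $e_{\bm n}$ on the Fourier-coefficient side instead of pointwise, and you obtain completeness from density of the partial sums rather than by showing that a BMC function orthogonal to all $e_{\bm n}$ vanishes.
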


\begin{proof}
	We first construct an $\Omega_\phi$ that fulfills \eqref{eq:redundancy} and \eqref{eq:non-zero}. We consider the equivalence relation
	$
		\bm n \sim \bm m \Leftrightarrow \mathcal M(\bm n) = \mathcal M(\bm m)
	$ on the finite set $\{-2,-1,0,1,2\}^d$ and choose some set $\Omega$ of representatives of the corresponding quotient space. We then define
	\begin{equation*}
		\Omega^\prime \coloneqq \{\bm n \in \Omega \mid r_{\bm n,\bm n}\not=0\}
	\end{equation*}
	and 
	\begin{equation*}
		\Omega_\phi \coloneqq \bigcup_{\bm n \in \Omega^\prime} \pare{\bigtimes_{j=1}^d \big(n_j+2\sgn(n_j) \, \NN\big)},
	\end{equation*}
  where $\bigtimes$ denotes the Cartesian product of sets. For $\bm n,\bm m \in \Z^d$, we have $\mathcal M(\bm n)=\mathcal M(\bm m)$ or $\mathcal M(\bm n)\cap \mathcal M(\bm m)=\emptyset$. Since the reflection maps $\mathrm M^i$, $i \in [p]$, only change signs, we have  $\mathcal M(\bm n)=\mathcal M(\bm m)$ if and only if $\abs{n_j}=\abs{m_j}$ for all $j \in [m]$ and $\mathcal M(\tilde{\bm n})=\mathcal M(\tilde{\bm m})$, where $\tilde{n}_j=\sgn(n_j)$ and $\tilde{m}_j=\sgn(m_j)$. Thus, $\Omega_\phi$ satisfies \eqref{eq:redundancy}. Furthermore, we observe that the value of $r_{\bm n,\bm n}$ only depends on which components of $\bm n$ are zero and which components are odd. In particular, $r_{\bm m,\bm m}=r_{\bm n,\bm n}$ for all $\bm m \in \mathcal M(\bm n)$ and adding even integers to $\Omega^\prime$ ensures \eqref{eq:non-zero}.
	
	We now show that for all $\bm n \in \Z^d$ the function $e_{\bm n}$, as defined by \eqref{eq:en}, is a BMC function. By \eqref{eq:r_formula} and the definition of $\mathcal M(\bm n)$, we can rewrite $e_{\bm n}$ as
	\begin{equation}\label{eq:basis_sym}
		e_{\bm{n}}  = \sum_{\bm m \in \mathcal M(\bm n)}\frac{1}{\bm n_{\#}} \sum_{\substack{I \subset [p],\\ \mathrm M^I(\bm n)=\bm m}}(-1)^{\mathrm N^I(\bm n)} \, \e^{\im \inn{\bm m,\cdot}}= \frac{1}{\bm n_\#} \sum_{I \subset [p]} (-1)^{\mathrm N^I(\bm n)} \, \e^{\im \inn{\mathrm M^I(\bm n),\cdot}}.
	\end{equation}	
	Let $\ell \in [p]$ and $\bm x \in \T^d$. For the symmetry function $s^\ell$ from \Cref{defin:DFS}, we have
  \begin{align}\label{eq:exp_id}
    \begin{split}
      \e^{\im \inn{\bm n,s^\ell(\bm x)}} 
      &= \e^{\im \inn{\bm n,\mathrm S^\ell+\mathrm M^\ell(\bm x)}}= \e^{\im \inn{\bm n,\mathrm S^\ell}}\, \e^{\im \inn{\bm n,\mathrm M^\ell(\bm x)}}\\
      &= \e^{\im \pi \sum_{j\in[d],\mathrm S^\ell_j=\pi} n_j}\, \e^{\im \inn{\mathrm M^\ell(\bm n),\bm x}}
      =(-1)^{\mathrm N^\ell(\bm n)}\e^{\im \inn{\mathrm M^\ell(\bm n),\bm x}}.
    \end{split}	
  \end{align}
	For $I \subset [p]$ it clearly holds that $\pare{\mathrm M^i(\bm n)}_j=\pm n_j$ and thus
	\begin{equation}\label{eq:minus_exp}
		(-1)^{\mathrm N^\ell \pare{\mathrm M^I(\bm n)}}=(-1)^{\sum_{j\in [d], \mathrm S^\ell_j=\pi}\pm n_j}=(-1)^{\sum_{j \in [d],\mathrm S^\ell_j=\pi} n_j}=(-1)^{\mathrm N^\ell(\bm n)}.
	\end{equation}
	Altogether, using the definition of $\mathcal M(\bm n)$, this implies
	\begin{alignat*}{3}
		& e_{\bm n}(s^\ell(\bm x)) && \centermathcell{\underset{\eqref{eq:basis_sym}}{=}} && \frac{1}{\bm n_\#} \sum_{I \subset [p]} (-1)^{\mathrm N^I(\bm n)} \, \e^{\im  \inn{\mathrm M^I(\bm n),s^\ell(\bm x)}}\\
		& && \centermathcell{\underset{\eqref{eq:exp_id}}{=}} &&\frac{1}{\bm n_\#} \sum_{I \subset [p]} (-1)^{\mathrm N^I(\bm n)}(-1)^{\mathrm N^\ell\pare{\mathrm M^I(\bm n)}} \, \e^{\im  \inn{\mathrm M^\ell \pare{\mathrm M^I(\bm n)},\bm x}}\\
		& &&\centermathcell{\underset{\eqref{eq:minus_exp}}{=}} &&\frac{1}{\bm n_\#}\sum_{I \subset [p]} (-1)^{ \mathrm N^I(\bm n)+\mathrm N^\ell(\bm n)} \, \e^{\im \inn{\mathrm M^\ell \pare{\mathrm M^I(\bm n)},\bm x}}\\
		 & &&\centermathcell{\underset{\eqref{eq:N_sum},\eqref{eq:M_sum}}{=}\,} &&\frac{1}{\bm n_\#} \sum_{I \subset [p]} (-1)^{\mathrm N^{I \Delta \{\ell\}}(\bm n)} \, \e^{\im \inn{\mathrm M^{I \Delta \{\ell\}}(\bm n),\bm x}}\underset{\eqref{eq:basis_sym}}{=}e_{\bm n}(\bm x),
	\end{alignat*}
  which proves that $e_{\bm n}$ is a BMC function.
  
   The orthonormality of the Fourier basis $\e^{\im \inn{\bm n,\cdot}}$, $\bm n \in \Z^d$, combined with \eqref{eq:non-zero} immediately implies for all $\bm n \in \Omega_\phi$ that
	\begin{align*}
		\norm{e_{\bm n}}^2_{L_2(\T^d)}
		& =\sum_{\bm m \in \mathcal M(\bm n)} \underbrace{r_{\bm n,\bm m}^2}_{= \, 1}=\#\mathcal M(\bm n)\le \#\mathcal{P}([p])=2^p,
	\end{align*}
	where $\mathcal{P}$ denotes the power set.
	
	Next, we show the orthogonality of $e_{\bm n}$, $\bm n \in \Omega_\phi$. Let $\bm n,\bm m \in \Omega_\phi$ with $\bm n\not=\bm m$. Then, by \eqref{eq:redundancy}, we have $\mathcal M(\bm n)\cap \mathcal M(\bm m)=\emptyset$.
  Hence, for any $I,J \subset [p]$, it holds that
	${\mathrm M^I(\bm n)\not=\mathrm M^J(\bm m)}$,
  which implies
	 \begin{equation*}
	 	\inn{\e^{\im \inn{\mathrm M^I(\bm n),\cdot}},\e^{\im \inn{ \mathrm M^J(\bm m),\cdot}}}_{L_2(\T^d)}=0, \qquad I,J \subset [p].
	 \end{equation*} 
	 The bilinearity of the inner product and \eqref{eq:basis_sym} now immediately yield $\inn{e_{\bm n},e_{\bm m}}_{L_2(\T^d)}=0$. 
	 
	Let $g \in L_2(\T^d)$ be a BMC function and $i \in [p]$. We employ \eqref{eq:exp_id}, properties of $s^i$, and a change of variables to obtain that for any $\bm n \in \Z^d$
	\begin{align*}
		(2\pi)^d \, c_{\bm n}(g) &= \int_{\T^d} g(\bm x) \, \e^{-\im\inn{\bm n,\bm x}} \d \bm x= \int_{\T^d} g(s^i(\bm x)) \, \e^{-\im \inn{\bm n,\bm x}} \d \bm x\\
		& = \int_{\T^d} g(\bm x) \, \e^{-\im \inn{\bm n,s^i(\bm x)}}\d \bm x = (-1)^{\mathrm N^i(\bm n)} \int_{\T^d} g(\bm x) \, \e^{-\im \inn{\mathrm M^i(\bm n),\bm x}}\d \bm x.
	\end{align*}
	Division by $(2\pi)^d$ and induction yields
	\begin{equation}\label{eq:coeff_sym}
		c_{\bm n}(g)=(-1)^{\mathrm N^I(\bm n)} \, c_{\mathrm M^I(\bm n)}(g), \quad \bm n \in \Z^d, I\subset [p].
	\end{equation}
 We can now show \eqref{eq:series_sym}. Consider some finite set $\Omega \subset \Omega_\phi$. It is clear from \eqref{eq:redundancy} and the definition of $\mathcal M(\Omega)$ that 
	 \begin{equation*}
	 	\mathcal M(\Omega)=\dot{\bigcup_{\bm n \in \Omega}} \mathcal M(\bm n),
	 \end{equation*}
	 where $\dot{\cup}$ stands for a disjoint union. For $\bm n \in \Omega$, we have
	 \begin{alignat*}{3}
	 	& \sum_{\bm m \in \mathcal M(\bm n)} c_{\bm m}(g) \, \e^{\im \inn{\bm m,\cdot}} && \centermathcell{\underset{\eqref{eq:cardinal}}{=}} && \frac{1}{\bm n_\#}\sum_{I \subset [p]} c_{\mathrm M^I(\bm n)}(g) \, \e^{\im \inn{\mathrm M^I(\bm n),\cdot}}\\
	 	& && \centermathcell{\underset{\eqref{eq:coeff_sym}}{=}\,} && \frac{1}{\bm n_\#} \sum_{I \subset [p]} (-1)^{\mathrm N^I(\bm n)} \, c_{\bm n}(g) \, \e^{\im \inn{\mathrm M^I(\bm n),\cdot}}
	 	{\underset{\eqref{eq:basis_sym}}{=}}  c_{\bm n}(g)\,  e_{\bm n}.
	 \end{alignat*}
	 This shows
	 \begin{equation*}
	 	 \mathrm F_{\mathcal M(\Omega)} g=\sum_{\bm n \in \Omega} c_{\bm n}(g) \, e_{\bm n}
	 \end{equation*}
	 and in particular implies \eqref{eq:series_sym}.
	 
   We show the completeness of $e_{\bm n}$, $\bm n \in \Omega_\phi$.
   Let $g \in L_2(\T^d)$ be a BMC function such that $\inn{g, e_{\bm n}}=0$ for all $\bm n\in\Omega_\phi$.
   We need to show that $c_{\bm m}(g)=0$ for all $\bm m\in\Z^d$, which implies $g=0$.
   Let $\bm m\in\Z^d$. 
   If $r_{\bm m,\bm m}=0$, then there exists $I\subset [p]$ with $\mathrm N^I(\bm m)$ odd and $\mathrm M^I(\bm m)=\bm m$.
   By \eqref{eq:coeff_sym}, we have $c_{\bm m}(g) = - c_{\bm m}(g)$ and hence $c_{\bm m}(g) = 0$.
   If $r_{\bm m,\bm m}\neq0$, by \eqref{eq:non-zero}, there exists $\bm n\in\Omega_\phi$ and $J\subset [p]$ such that $\mathrm M^J(\bm n)=\bm m$ and we obtain
   \begin{alignat*}{3}
     & 0
     = \inn{g,e_{\bm n}}
     && \centermathcell{\underset{\eqref{eq:basis_sym}}{=}} && \frac{1}{\bm n_\#} \sum_{I \subset [p]} (-1)^{\mathrm N^I(\bm n)}\, c_{\mathrm M^I(\bm n)}(g)\\
     & && \centermathcell{\underset{\eqref{eq:cardinal},\eqref{eq:coeff_sym}}{=}} && \# \mathcal{M}(\bm n) \, c_{\bm n}(g)
     \underset{\eqref{eq:coeff_sym}}{=} \underbrace{\# \mathcal{M}(\bm n) \, (-1)^{\mathrm N^J(\bm n)}}_{\not= \, 0} \, c_{\bm m}(g),
   \end{alignat*}
   which proves completeness.
\end{proof}

\begin{rem}
    Instead of using a standard FFT for the expansion in the basis $e_{\bm n}$, symmetry-dependent FFT variants can be used and thereby reduce computational cost. In one dimension, the discrete cosine transform (DCT) and discrete sine transform (DST) \cite{BriYipRao06} are well-known replacements of the FFT for real-valued functions with even or odd symmetries, respectively. Similarly, computational cost can be reduced if a Fourier series consists only of even or odd degrees \cite{Rab79}. Symmetry-dependent FFT variants also exist in higher dimensions, see e.g., \cite{Ber99}. Such algorithms often consist of combining one-dimensional techniques \cite{DevLab82,SerJag97}, for example by row-column methods \cite[ch.~5.3.5]{PloPotSteTas18}.
\end{rem}

The  functions $e_{\bm n}$ in the last theorem are BMC functions and thus, by \Cref{lem:BMC}, they correspond to functions defined on the manifold $\M$ without the null set of singularities. We use this correspondence to define an analogue to the Fourier series on $\M$. 

\begin{defin}\label{defin:F_manifold} 
	For $\bm n \in \Omega_\phi$, we define the \emph{$\bm n$-th DFS basis function}
	\begin{equation}\label{eq:bn}
		b_{\bm{n}}(\bm \xi) \coloneqq e_{\bm{n}} \Big( \big(\restr{\phi}{D_1\dot{\cup} D_2}\big)^{-1}(\bm \xi)\Big), \quad \bm \xi \in \M,
	\end{equation}
	with $e_{\bm n}$ as in \Cref{lem:Fourier_series}. Let $\Omega_h, \, h \in \N,$ be an expanding sequence of bounded sets exhausting $\Omega_\phi$. For $f \colon \M \to \C$ with $\tilde f \in L_2(\T^d)$, we define the \emph{$h$-th partial DFS Fourier sum} of $f$ by
	\begin{equation*}
		\mathrm S_{\Omega_h}f(\bm \xi) \coloneqq \sum_{\bm n \in \Omega_h} c_{\bm n}(\tilde f) \, b_{\bm n}(\bm \xi),\quad \bm \xi \in \M,
	\end{equation*}
	and the \emph{DFS Fourier series} of $f$ by $\mathrm S f\coloneqq \lim_{h \to \infty} \mathrm S_{\Omega_h}f$. 
\end{defin}
The basis functions $b_{\bm n}$ might be non-smooth on the set of singularities $\phi[D_2]$.
Regardless, we will later show that $\mathrm Sf$ converges uniformly on $\M$ provided that $f$ is sufficiently smooth. 

\begin{thm}\label{thm:manifold_DFS}
	Let $f \colon \M \to \C$ such that $\tilde{f} \in L_2(\T^d)$, and let $\Omega \subset \Omega_\phi$ be finite. Then, the partial DFS Fourier sum $\mathrm S_\Omega f$ is given by 
	\begin{equation}\label{eq:manifold_series_eq}
		 \mathrm S_{\Omega}f(\bm{\xi})=\mathrm F_{\mathcal M(\Omega)}\tilde{f}\Big( \big(\restr{\phi}{D_1\dot{\cup} D_2}\big)^{-1}(\bm \xi)\Big), \qquad \bm \xi \in \M
	\end{equation}
	and it is the unique function on $\M$ that satisfies
	\begin{equation}\label{eq:DFSseries_eq}
		\big((\mathrm S_{\Omega}f) \circ \phi\big)(\bm x) \eqqcolon \widetilde{\mathrm S_\Omega f}(\bm x)=\mathrm F_{\mathcal M(\Omega)}\tilde f(\bm x), \qquad \bm x \in \bigcup_{I \subset [p]} s^I[D_1] \cup D_2.
\end{equation}
In particular, this equality holds almost everywhere on $\T^d$. Furthermore, $\mathrm S_{\Omega}f$ is continuous on $\M \setminus \phi[D_2]$ and smooth in $\M^\circ\setminus \phi[D_2]$, where $\M^\circ$ denotes the manifold interior. If $\mathrm F \tilde f$ is pointwise convergent to $\tilde f$, then $\mathrm Sf$ is pointwise convergent to $f$.
\end{thm}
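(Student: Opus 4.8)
The plan is to reduce every assertion to the identity~\eqref{eq:series_sym} of \Cref{lem:Fourier_series} together with the BMC correspondence from \Cref{lem:BMC,lem:even}. Since $\tilde f=f\circ\phi$ is a BMC function by \Cref{lem:BMC}, \eqref{eq:series_sym} applies with $g=\tilde f$. First I would prove~\eqref{eq:manifold_series_eq}: unfolding \Cref{defin:F_manifold} and the definition~\eqref{eq:bn} of $b_{\bm n}$ gives
\[
	\mathrm S_\Omega f(\bm\xi)=\sum_{\bm n\in\Omega}c_{\bm n}(\tilde f)\,e_{\bm n}\big((\restr{\phi}{D_1\dot{\cup}D_2})^{-1}(\bm\xi)\big),
\]
and substituting~\eqref{eq:series_sym} with $\bm x=(\restr{\phi}{D_1\dot{\cup}D_2})^{-1}(\bm\xi)$ collapses the $e_{\bm n}$-sum into $\mathrm F_{\mathcal M(\Omega)}\tilde f$, which is exactly~\eqref{eq:manifold_series_eq}.

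For~\eqref{eq:DFSseries_eq} and uniqueness, I would note that $g\coloneqq\mathrm F_{\mathcal M(\Omega)}\tilde f=\sum_{\bm n\in\Omega}c_{\bm n}(\tilde f)\,e_{\bm n}$ is a finite combination of the BMC functions $e_{\bm n}$, hence itself BMC. Applying \Cref{lem:BMC} to this $g$, the function $g\circ(\restr{\phi}{D_1\dot{\cup}D_2})^{-1}$ is the \emph{unique} function on $\M$ whose DFS function agrees with $g$ on $\bigcup_{I\subset[p]}s^I[D_1]\cup D_2$; by~\eqref{eq:manifold_series_eq} it equals $\mathrm S_\Omega f$, giving both~\eqref{eq:DFSseries_eq} and its uniqueness. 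The ``almost everywhere'' claim then follows from \Cref{lem:even}, since $\T^d\setminus\bigcup_{I\subset[p]}s^I(D_1^\circ)$ has measure zero while $\bigcup_{I\subset[p]}s^I(D_1^\circ)\subset\bigcup_{I\subset[p]}s^I[D_1]\cup D_2$.

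The regularity statement is then immediate from~\eqref{eq:manifold_series_eq}: the partial sum $\mathrm F_{\mathcal M(\Omega)}\tilde f$ is a trigonometric polynomial, hence smooth on $\T^d$, so $\mathrm S_\Omega f$ inherits the regularity of $(\restr{\phi}{D_1\dot{\cup}D_2})^{-1}$ under composition. By \Cref{lem:even} this inverse is continuous on $\M\setminus\phi[D_2]$ and smooth in the manifold interior there, which yields continuity on $\M\setminus\phi[D_2]$ and smoothness on $\M^\circ\setminus\phi[D_2]$.

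Finally, for the pointwise convergence I would fix $\bm\xi\in\M$, set $\bm y\coloneqq(\restr{\phi}{D_1\dot{\cup}D_2})^{-1}(\bm\xi)$, and use $\phi(\bm y)=\bm\xi$ to get $f(\bm\xi)=\tilde f(\bm y)$; by~\eqref{eq:manifold_series_eq} it then suffices to show $\mathrm F_{\mathcal M(\Omega_h)}\tilde f(\bm y)\to\tilde f(\bm y)$ for every expanding bounded sequence $\Omega_h$ exhausting $\Omega_\phi$. \textbf{This is the main obstacle:} the index sets $\mathcal M(\Omega_h)$ exhaust only $\mathcal M(\Omega_\phi)=\{\bm n\mid r_{\bm n,\bm n}\neq0\}$ and not all of $\Z^d$, so the hypothesis on $\mathrm F\tilde f$ does not apply verbatim. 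To bridge this gap I would invoke that $\tilde f$ is BMC, so by~\eqref{eq:coeff_sym} (as in the completeness part of \Cref{lem:Fourier_series}) its coefficients $c_{\bm n}(\tilde f)$ vanish whenever $r_{\bm n,\bm n}=0$. Since every $\mathrm M^I$ preserves $\norm{\cdot}$, the $\mathcal M(\Omega_h)$ remain bounded and expanding, and augmenting them by the indices of a fixed exhaustion of $\Z^d$ lying outside $\mathcal M(\Omega_\phi)$ produces an expanding bounded sequence $\Omega_h'$ exhausting $\Z^d$ with $\mathrm F_{\Omega_h'}\tilde f=\mathrm F_{\mathcal M(\Omega_h)}\tilde f$, the extra coefficients being zero. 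The assumed convergence of $\mathrm F\tilde f$ along $\Omega_h'$ then gives $\mathrm F_{\mathcal M(\Omega_h)}\tilde f(\bm y)\to\tilde f(\bm y)=f(\bm\xi)$, i.e.\ $\mathrm Sf(\bm\xi)=f(\bm\xi)$.
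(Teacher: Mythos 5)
Your proposal is correct and follows essentially the same route as the paper: \eqref{eq:series_sym} applied to the BMC function $\tilde f$ gives \eqref{eq:manifold_series_eq}, \Cref{lem:BMC} applied to the BMC function $\mathrm F_{\mathcal M(\Omega)}\tilde f$ gives \eqref{eq:DFSseries_eq} with uniqueness, \Cref{lem:even} gives the measure-zero and regularity claims, and the vanishing of $c_{\bm n}(\tilde f)$ for $r_{\bm n,\bm n}=0$ (via \eqref{eq:coeff_sym}) bridges the gap between exhaustions of $\mathcal M(\Omega_\phi)$ and of $\Z^d$ for the pointwise convergence. The only difference is that you spell out this last step in more detail than the paper does, which is a point in your favor rather than a deviation.
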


\begin{proof}
	By \Cref{lem:BMC}, the DFS function $\tilde f$ of $f$ is a BMC function. Thus, we obtain by \Cref{lem:Fourier_series} that
	\begin{equation*}
		\mathrm F_{\mathcal M(\Omega)}\tilde f(\bm x)=\sum_{\bm n \in \Omega} c_{\bm n}(\tilde f) \, e_{\bm n}(\bm x), \qquad \bm x \in \T^d.
	\end{equation*}
	Applying the inverse of ${\restr{\phi}{D_1\dot{\cup} D_2}}$ to both sides of the equation immediately yields \eqref{eq:manifold_series_eq} by the definition of $b_{\bm n}$ in \eqref{eq:bn}. Since $\mathrm F_{\mathcal M(\Omega)}\tilde f$ is the finite sum of BMC functions, and thus a BMC function, we can apply \Cref{lem:BMC} again to prove \eqref{eq:DFSseries_eq}. The regularity follows immediately form \Cref{lem:even}. We observe that for any expanding sequence $\Omega_h$, $h \in \N$, of bounded sets exhausting $\Omega_\phi$, the sequence $\mathcal M(\Omega_h)$, $h \in \N$, is an expanding sequence of bounded sets exhausting $\{\bm n \in \Z^d\mid r_{\bm n,\bm n}\not=0\}$. Since $\tilde{f}$ is a BMC function, we have  $c_{\bm n}(\tilde f) = 0$ for any $\bm n \in \Z^d$ with $r_{\bm n,\bm n}=0$ and thus the pointwise convergence follows with \eqref{eq:series_sym} and \eqref{eq:manifold_series_eq}.
\end{proof}

\begin{thm}\label{thm:weighted}
  We set $g(\bm x)\coloneqq \abs{\mathrm{det}\big(\nabla \phi(\bm x)^\intercal \nabla \phi(\bm x)\big)}^{1/2}$ for $\bm x\in D_1^{\circ}$. Let $\tilde{L}_2(\M)$ denote the $L_2$-space induced by the weighted inner product
  \begin{equation*}
    \inn{f_1,f_2}_{\tilde{L}_2(\M)}
    \coloneqq
    \int_{\M} f_1(\bm \xi) \, \overline{f_2(\bm \xi)} \, \frac{2^{p-d}\pi^{-d}}{{\big(g \circ (\restr{\phi}{D_1^\circ})^{-1}\big)(\bm \xi)}} \d \omega(\bm \xi),
    \qquad f_1,f_2\colon\M\to\C,
  \end{equation*}
  where $\mathrm{d}\omega$ denotes the surface measure on $\M$.
  The set $b_{\bm n}$, $\bm n \in \Omega_\phi$, forms an orthogonal basis of $\tilde{L}_2(\M)$ and 
  we have for all $f\in\tilde L_2(\M)$ that
  \begin{equation} \label{eq:manifold_c}
    c_{\bm n}(\tilde{f})=\frac{\inn{\smash{\tilde{f}},e_{\bm n}}_{L_2(\T^d)}}{\norm{e_{\bm n}}^2_{L_2(\T^d)}}=\frac{\inn{f,b_{\bm n}}_{\tilde{L}_2(\M)}}{\norm{b_{\bm n}}^2_{\tilde{L}_2(\M)}}
    ,\qquad \bm n \in \Omega_\phi.
  \end{equation}
  In particular, the DFS Fourier series $Sf$ of any $f \in \tilde{L}_2(\M)$ is convergent in $\tilde{L}_2(\M)$ and we have $Sf=f$.
\end{thm}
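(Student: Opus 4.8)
The plan is to show that the map $f \mapsto \tilde f = f\circ\phi$ is an isometric isomorphism from $\tilde L_2(\M)$ onto the subspace of BMC functions in $L_2(\T^d)$; once this is established, the theorem follows by transporting the orthogonal basis $e_{\bm n}$, $\bm n\in\Omega_\phi$, from \Cref{lem:Fourier_series} to the functions $b_{\bm n}$ on $\M$. Concretely, I would first prove the key identity
\begin{equation*}
  \inn{f_1,f_2}_{\tilde L_2(\M)} = \inn{\tilde f_1,\tilde f_2}_{L_2(\T^d)}, \qquad f_1,f_2\in\tilde L_2(\M).
\end{equation*}

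To obtain this identity, I would combine two changes of variables. On the manifold side, since $\phi[D_2]$ and $\M\setminus\phi[D_1^\circ]$ are null sets in $\M$ by \Cref{lem:even}, the integral defining $\inn{f_1,f_2}_{\tilde L_2(\M)}$ reduces to an integral over $\phi[D_1^\circ]$; applying the area formula for the surface measure $\dx\omega$ to the chart $\restr{\phi}{D_1^\circ}$ (cf.\ \cite{Lee12}) turns it into $2^{p-d}\pi^{-d}\int_{D_1^\circ}\tilde f_1\,\overline{\tilde f_2}\dx\bm x$, because the weight $g\circ(\restr{\phi}{D_1^\circ})^{-1}$ cancels exactly the Gram factor $g=\abs{\det(\nabla\phi^\intercal\nabla\phi)}^{1/2}$. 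Here property \ref{Prop4} guarantees $g>0$ on $D_1^\circ$, so the weight is well defined. On the torus side, \Cref{lem:even} shows that the $2^p$ sets $s^I(D_1^\circ)$, $I\subset[p]$, are pairwise disjoint and cover $\T^d$ up to a null set; since each $s^I$ is affine with $\abs{\det\mathrm M^I}=1$ and the DFS functions are invariant under $s^I$ (being BMC by \Cref{lem:BMC}), each of the $2^p$ pieces contributes the same amount and $\int_{\T^d}\tilde f_1\,\overline{\tilde f_2}\dx\bm x = 2^p\int_{D_1^\circ}\tilde f_1\,\overline{\tilde f_2}\dx\bm x$. Matching the constant $(2\pi)^{-d}2^p=2^{p-d}\pi^{-d}$ then yields the claimed identity. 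Surjectivity onto the BMC subspace follows from \Cref{lem:BMC}, and injectivity from $\phi[D_1^\circ]$ having full measure, so the map is an isometric isomorphism and in particular $\tilde L_2(\M)$ is a Hilbert space.

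With the isometry in hand, the remaining assertions are short. Since $b_{\bm n}\circ\phi$ agrees with $e_{\bm n}$ on $D_1\dot{\cup} D_2$ by \eqref{eq:bn}, and both are BMC functions, they coincide in $L_2(\T^d)$; hence the isometry carries the orthogonal basis $e_{\bm n}$ onto $b_{\bm n}$, $\bm n\in\Omega_\phi$, which proves the basis claim and the second equality in \eqref{eq:manifold_c}. For the first equality in \eqref{eq:manifold_c}, I would reuse the computation from the completeness part of the proof of \Cref{lem:Fourier_series}: expanding $e_{\bm n}$ via \eqref{eq:basis_sym} and applying the coefficient symmetry \eqref{eq:coeff_sym} gives $\inn{\tilde f,e_{\bm n}}_{L_2(\T^d)}=\#\mathcal M(\bm n)\,c_{\bm n}(\tilde f)=\norm{e_{\bm n}}_{L_2(\T^d)}^2\,c_{\bm n}(\tilde f)$. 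Finally, because $b_{\bm n}$ is an orthogonal basis of the Hilbert space $\tilde L_2(\M)$ whose generalized Fourier coefficients are $\inn{f,b_{\bm n}}_{\tilde L_2(\M)}/\norm{b_{\bm n}}_{\tilde L_2(\M)}^2=c_{\bm n}(\tilde f)$, the abstract theory of orthogonal expansions gives convergence of $\mathrm S f=\sum_{\bm n\in\Omega_\phi}c_{\bm n}(\tilde f)\,b_{\bm n}$ in $\tilde L_2(\M)$ with $\mathrm S f=f$, independently of the exhausting sequence $\Omega_h$.

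I expect the main obstacle to be the isometry identity, and within it the careful bookkeeping of the null sets needed to invoke the area formula and the $2^p$-fold tiling of $\T^d$ from \Cref{lem:even}, together with verifying that the normalization weight $2^{p-d}\pi^{-d}$ is exactly the one that makes $f\mapsto\tilde f$ norm-preserving. The measure-theoretic steps --- that $\restr{\phi}{D_1^\circ}$ is a genuine chart to which the area formula applies, and that the affine symmetry maps $s^I$ are measure-preserving --- are where one must be most careful, whereas the algebraic coefficient identities are already essentially contained in the proof of \Cref{lem:Fourier_series}.
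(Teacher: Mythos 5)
Your proposal is correct and follows essentially the same route as the paper: the central step in both is the isometry $\inn{\tilde f_1,\tilde f_2}_{L_2(\T^d)}=\inn{f_1,f_2}_{\tilde L_2(\M)}$, obtained from the $2^p$-fold tiling of $\T^d$ by the sets $s^I(D_1^\circ)$ from \Cref{lem:even}, the $s^I$-invariance of DFS functions, and the substitution rule on the submanifold $\phi[D_1^\circ]$, after which the basis and coefficient claims transfer via $\tilde b_{\bm n}=e_{\bm n}$ almost everywhere. Your additional details (surjectivity onto the BMC subspace and the explicit computation $\inn{\smash{\tilde f}},e_{\bm n}\rangle=\#\mathcal M(\bm n)\,c_{\bm n}(\tilde f)$) are steps the paper leaves implicit but are consistent with its argument.
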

\begin{proof}
	By \Cref{lem:even}, the set $\M \setminus \phi[D_1^\circ]$ has measure zero in $\M$ and by \ref{Prop4} in \Cref{defin:DFS}, we have $g(\bm x)\not=0$ for all $\bm x \in D_1^\circ$. Hence, we can apply the substitution rule \cite[prop.~15.31]{Lee12} for the orientable submanifold $\phi[D_1^\circ]\subset\M$
  and we obtain for $f_1,f_2\in \tilde L_2(\M)$ the isometry
  \begin{align*}
    \langle \tilde{f}_1,\tilde{f}_2 \rangle_{L_2(\T^d)} & =(2\pi)^{-d} \int_{\T^d} \tilde{f}_1(\bm x) \, \overline{\tilde{f}_2(\bm x)} \d \bm x =  (2\pi)^{-d} \sum_{I \subset [p]} \int_{s^I[D_1^\circ]} \tilde{f}_1(\bm x) \, \overline{\tilde{f}_2(\bm x)} \d \bm x\\
    &  = 2^{p-d}\pi^{-d} \int_{D_1^\circ} \frac{f_1\big(\phi(\bm x)\big) \, \overline{f_2\big(\phi(\bm x)\big)}}{\big(g \circ (\restr{\phi}{D_1^\circ})^{-1}\big)\big(\phi(\bm x)\big)} \, g(\bm x) \d \bm x
    = \inn{f_1,f_2}_{\tilde{L}_2(\M)},
  \end{align*}
  where we used \eqref{eq:even} in the second equality and the $s^I$--invariance of DFS functions in the third equality.
  Since $\tilde{b}_{\bm n}=e_{\bm n}$ almost everywhere in $\T^d$, we obtain \eqref{eq:manifold_c}. 
\end{proof}
\subsection{Convergence of the Fourier series}\label{sec:convergence}

In this subsection, we finally obtain convergence results for DFS Fourier series on the manifold $\M$. Here, we closely follow the derivation in \cite[§~5.2]{MilQue22} and generalize it to higher dimensions. For brevity of notation, we set $\mathcal{C}^{k,1}(\M) := \mathcal{C}^{k+1}(\M)$ for $k\in\NN$, and the same for $\M$ replaced by $\T^d$.

\begin{lem}\label{lem:bSeries}
    Let $k \in \NN, \ 0<\alpha \leq 1$, and $f\in \mathcal{C}^{k,\alpha}(\M)$. Let $c_{\bm{n}}(\tilde{f}), \, \bm n \in \Z^d,$ denote the Fourier coefficients of the generalized DFS function $\tilde f$ of $f$. Then, the series
    \begin{equation}\label{eq:bSeries}
        \sum_{\bm{n} \in \Z^d} \big\lvert c_{\bm{n}}(\tilde{f})\big\rvert^b
    \end{equation}
    converges for all $b \in \mathbb{R}$ with $b > 2 d / \big({d+2\pare{k+\alpha}}\big)$.
\end{lem}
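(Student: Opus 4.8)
The plan is to reduce the statement to a purely Fourier-analytic estimate on the torus and then combine a fractional Sobolev bound with Hölder's inequality. First I would invoke \Cref{thm:hH_bound} (for $0<\alpha<1$) and \Cref{thm:hC_bound} (for the Lipschitz case $\alpha=1$, where $\mathcal C^{k,1}(\M)=\mathcal C^{k+1}(\M)$) to conclude that the DFS function $\tilde f=f\circ\phi$ lies in $\mathcal C^{k,\alpha}(\T^d)$, and in $\mathcal C^{k+1}(\T^d)$ when $\alpha=1$. From this point neither $\M$ nor $\phi$ enters any further; everything rests on the smoothness of $\tilde f$ as a $2\pi$-periodic function.

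The central input is the weighted square-summability
\[
\sum_{\bm n\in\Z^d}\abs{c_{\bm n}(\tilde f)}^2\norm{\bm n}^{2\sigma}<\infty\qquad\text{for every }\sigma<k+\alpha,
\]
i.e.\ $\tilde f\in H^\sigma(\T^d)$. I would prove this via the $L_2$-modulus of continuity. For $g\in\mathcal C^\alpha(\T^d)$ and a coordinate direction $\bm e^j$, Parseval gives $\norm{g(\cdot+t\bm e^j)-g}_{L_2(\T^d)}^2=\sum_{\bm n}4\sin^2(n_jt/2)\,\abs{c_{\bm n}(g)}^2$, while the Hölder seminorm gives $\norm{g(\cdot+t\bm e^j)-g}_{L_2(\T^d)}\le\abs{g}_{\mathcal C^\alpha(\T^d)}\abs t^\alpha$. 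Multiplying by $t^{-1-2\sigma}$ and integrating $t$ over $(0,\pi)$, the left-hand side stays finite precisely when $\sigma<\alpha$, whereas the substitution $u=\abs{n_j}t$ shows $\int_0^\pi 4\sin^2(n_jt/2)\,t^{-1-2\sigma}\d t\ge c_\sigma\abs{n_j}^{2\sigma}$ for $\abs{n_j}\ge1$. Summing over $j\in[d]$ and using $\sum_j\abs{n_j}^{2\sigma}\ge d^{-\sigma}\norm{\bm n}^{2\sigma}$ settles the case $k=0$. For general $k$ I would apply this to each $g=\mathrm D^{\bm\beta}\tilde f$ with $\abs{\bm\beta}=k$, use $c_{\bm n}(\mathrm D^{\bm\beta}\tilde f)=\im^{k}\bm n^{\bm\beta}c_{\bm n}(\tilde f)$, and sum over $\bm\beta$ through the multinomial identity $\sum_{\abs{\bm\beta}=k}\binom{k}{\bm\beta}\abs{\bm n^{\bm\beta}}^2=\norm{\bm n}^{2k}$; in the Lipschitz case the bound is immediate from $\tilde f\in\mathcal C^{k+1}(\T^d)\subset H^{k+1}$, where one may even take $\sigma=k+1$.

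With this in hand I would finish by Hölder's inequality. For $b$ in the range $2d/(d+2(k+\alpha))<b<2$, the hypothesis guarantees some $\sigma<k+\alpha$ with $2\sigma b/(2-b)>d$; writing $\abs{c_{\bm n}(\tilde f)}^b=\big(\abs{c_{\bm n}(\tilde f)}^2\norm{\bm n}^{2\sigma}\big)^{b/2}\norm{\bm n}^{-\sigma b}$ for $\bm n\neq\bm 0$ and applying Hölder with exponents $2/b$ and $2/(2-b)$ bounds the series by $\big(\sum\abs{c_{\bm n}(\tilde f)}^2\norm{\bm n}^{2\sigma}\big)^{b/2}\big(\sum_{\bm n\neq\bm 0}\norm{\bm n}^{-2\sigma b/(2-b)}\big)^{(2-b)/2}$, with both factors finite (the second because $2\sigma b/(2-b)>d$). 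The remaining range $b\ge2$ follows from the inclusion $\ell^{b_0}\subset\ell^b$ once the claim is established for some $b_0\in\big(2d/(d+2(k+\alpha)),2\big)$.

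The main obstacle is establishing the fractional bound $\tilde f\in H^\sigma$ for $\sigma$ arbitrarily close to $k+\alpha$. The naive pointwise decay $\abs{c_{\bm n}(\tilde f)}\lesssim\norm{\bm n}^{-(k+\alpha)}$, obtainable from the difference trick applied to the top-order derivatives, only yields convergence for $b>d/(k+\alpha)$, which is strictly weaker than the asserted threshold $2d/(d+2(k+\alpha))$; one genuinely needs that the coefficient energy in dyadic shells decays like $2^{-2j\sigma}$, and this is exactly what the $L_2$-modulus-of-continuity integral captures. The delicate point there is the uniform lower bound $\int_0^\pi 4\sin^2(n_jt/2)\,t^{-1-2\sigma}\d t\ge c_\sigma\abs{n_j}^{2\sigma}$ for all $\abs{n_j}\ge1$, together with the convergence of the $t$-integral of the Hölder bound exactly up to $\sigma=\alpha$.
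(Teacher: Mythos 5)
Your proposal is correct, and at the structural level it follows the same two-step reduction as the paper: first transfer the H\"older regularity from $\M$ to $\T^d$ via \Cref{thm:hH_bound} (resp.\ \Cref{thm:hC_bound} when $\alpha=1$), then deduce the $\ell^b$-summability of the Fourier coefficients from membership in $\mathcal{C}^{k,\alpha}(\T^d)$. The difference is in how the second step is discharged: the paper simply cites the multidimensional Bernstein--Sz\'asz-type theorem from Khavin--Nikolski for the threshold $b>2d/(d+2(k+\alpha))$, whereas you prove it from scratch via the $L_2$-modulus of continuity (Parseval plus the lower bound on $\int_0^\pi 4\sin^2(n_jt/2)\,t^{-1-2\sigma}\,\mathrm{d}t$), the multinomial identity to absorb the $k$ derivatives, and H\"older's inequality with exponents $2/b$ and $2/(2-b)$. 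This is essentially the classical proof behind the citation, and notably the same translation-plus-Parseval device that the paper itself deploys in the appendix proof of \Cref{lem:diadic_bds} (there in dyadic-shell form, to get explicit constants). Your treatment of $\alpha=1$ via $\tilde f\in\mathcal{C}^{k+1}(\T^d)\subset H^{k+1}(\T^d)$ is marginally cleaner than the paper's device of dropping to $\mathcal{C}^{k,1-\varepsilon}(\T^d)$. All the individual estimates check out (the exponent arithmetic $b>2d/(d+2(k+\alpha))\iff 2(k+\alpha)b/(2-b)>d$ is what lets you pick $\sigma<k+\alpha$ with $2\sigma b/(2-b)>d$, and the case $b\ge2$ follows from the $\ell^{b_0}\subset\ell^{b}$ inclusion); the trade-off is that your argument is fully self-contained but longer, while the paper's is a two-line appeal to a reference.
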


\begin{proof}
  If $\alpha<1$, we can apply \Cref{thm:hH_bound} to obtain $\tilde{f} \in \mathcal{C}^{k,\alpha}(\T^d)$.
  By \cite[p.~87]{KhaNik92}, this immediately implies the convergence of \eqref{eq:bSeries} for all $b> 2 d/\big({d+2\pare{k+\alpha}}\big)$. 
  If $\alpha=1$, we choose $0<\varepsilon<1$ such that $b> 2 d/\big({d+2\pare{k+1-\varepsilon}}\big)$. 
  \Cref{thm:hC_bound} then yields $\tilde{f}\in \mathcal{C}^{k,1-\varepsilon}(\T^d)$ and the convergence of \eqref{eq:bSeries} follows as before.
\end{proof}

\begin{thm}\label{thm:hoelScon}
    Let $k \in \NN$ and $0 < \alpha \leq 1$ such that $2\pare{k+\alpha} >d$. For $f\in \mathcal{C}^{k,\alpha}(\M)$ the Fourier series $\mathrm F \tilde{f}$ converges uniformly to the DFS function $\tilde{f}$ and for $\Omega\subset\Z^d$, it holds that
    \begin{equation*}
        \big\lVert\tilde{f} - \mathrm F_\Omega \tilde{f}\big\rVert_{\mathcal{C}(\T^d)} 
        \le \sum_{\bm{n} \in \Z^d\setminus \Omega} \big\lvert c_{\bm{n}}(\tilde{f})\big\rvert.
    \end{equation*}
    Furthermore, the DFS Fourier series $\mathrm S f$ converges to $f$ uniformly on $\M$. For $\Omega \subset \Omega_\phi$, we obtain
    \begin{equation*}
    	\big\lVert f-\mathrm S_{\Omega}f \big\rVert_{\mathcal{C}(\M)} \le \sum_{\bm n \in \Z^d\setminus \mathcal M(\Omega)} \big\lvert c_{\bm n}(\tilde f)\big\rvert.
    \end{equation*}
\end{thm}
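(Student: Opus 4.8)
The plan is to reduce everything to the single fact that the Fourier coefficients of $\tilde f$ are absolutely summable, which is exactly where the hypothesis $2(k+\alpha)>d$ enters. Observe that $2(k+\alpha)>d$ is equivalent to $2d/\big(d+2(k+\alpha)\big)<1$, so \Cref{lem:bSeries} applies with the admissible exponent $b=1$ and yields $\sum_{\bm n\in\Z^d}\abs{c_{\bm n}(\tilde f)}<\infty$. This absolute summability is the engine of the whole argument.

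I would treat the torus first. By the Weierstrass $M$-test, since $\lvert c_{\bm n}(\tilde f)\,\e^{\im\inn{\bm n,\bm x}}\rvert=\abs{c_{\bm n}(\tilde f)}$, absolute summability makes $\mathrm F\tilde f$ converge absolutely and uniformly to a continuous limit. Because $\tilde f\in\mathcal C^{k,\alpha}(\T^d)$ by \Cref{thm:hC_bound,thm:hH_bound} is in particular continuous, and because $\mathrm F\tilde f=\tilde f$ in $L_2(\T^d)$ by \Cref{defin:F_torus}, the uniform limit must coincide with $\tilde f$ pointwise. The stated error bound is then immediate: for any $\Omega\subset\Z^d$,
\[
  \big\lVert\tilde f-\mathrm F_\Omega\tilde f\big\rVert_{\mathcal C(\T^d)}
  =\sup_{\bm x\in\T^d}\Big\lvert\sum_{\bm n\in\Z^d\setminus\Omega}c_{\bm n}(\tilde f)\,\e^{\im\inn{\bm n,\bm x}}\Big\rvert
  \le\sum_{\bm n\in\Z^d\setminus\Omega}\abs{c_{\bm n}(\tilde f)}.
\]

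For the manifold part I would transfer this estimate through the right inverse. By \eqref{eq:manifold_series_eq} in \Cref{thm:manifold_DFS}, the partial sum $\mathrm S_\Omega f$ is the pullback of $\mathrm F_{\mathcal M(\Omega)}\tilde f$ under $\big(\restr{\phi}{D_1\dot{\cup} D_2}\big)^{-1}$, while $\tilde f=f\circ\phi$ gives $f(\bm\xi)=\tilde f\big((\restr{\phi}{D_1\dot{\cup} D_2})^{-1}(\bm\xi)\big)$ for every $\bm\xi\in\M$. Hence the manifold error at $\bm\xi$ equals the torus error at its preimage, and since this inverse bijects $\M$ onto $D_1\dot{\cup} D_2\subset\T^d$, the supremum over $\M$ is dominated by the supremum over all of $\T^d$, so $\big\lVert f-\mathrm S_\Omega f\big\rVert_{\mathcal C(\M)}\le\big\lVert\tilde f-\mathrm F_{\mathcal M(\Omega)}\tilde f\big\rVert_{\mathcal C(\T^d)}\le\sum_{\bm n\in\Z^d\setminus\mathcal M(\Omega)}\abs{c_{\bm n}(\tilde f)}$. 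Uniform convergence of $\mathrm S f$ to $f$ then follows because, as recorded in the proof of \Cref{thm:manifold_DFS}, the sets $\mathcal M(\Omega_h)$ form an expanding sequence exhausting $\{\bm n\mid r_{\bm n,\bm n}\neq0\}$, outside of which the coefficients vanish, so the tail $\sum_{\bm n\in\Z^d\setminus\mathcal M(\Omega_h)}\abs{c_{\bm n}(\tilde f)}$ tends to zero.

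I do not expect a serious obstacle here, as the argument is essentially the promotion of $b=1$ in \Cref{lem:bSeries} followed by a transfer of the uniform torus estimate to $\M$. The two points requiring a little care are, first, justifying that the uniform limit of $\mathrm F\tilde f$ is genuinely $\tilde f$ rather than merely an $L_2$-representative, which is settled by continuity of $\tilde f$; and second, that the supremum over $\M$ corresponds exactly to a supremum over the fundamental domain $D_1\dot{\cup} D_2$, which is harmless since I bound it by the supremum over the full torus.
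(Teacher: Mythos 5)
Your proposal is correct and follows essentially the same route as the paper: Lemma \ref{lem:bSeries} with $b=1$ gives absolute summability, uniform convergence on $\T^d$ follows (the paper cites \cite[thm.~4.7]{PloPotSteTas18} where you reprove it via the $M$-test and continuity of $\tilde f$), and the manifold statement is transferred through Theorem \ref{thm:manifold_DFS} exactly as you spell out. The only difference is that you make explicit the details the paper compresses into ``\Cref{thm:manifold_DFS} now directly yields the second statement,'' and these details are sound.
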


\begin{proof}
    We apply \Cref{lem:bSeries} for $b=1>2d/\big(d+2\pare{k+\alpha}\big)$ and obtain that the series
    $\sum_{\bm{n} \in \Z^d} \lvert c_{\bm{n}}(\tilde{f})\rvert$ is convergent. Thus, we conclude by \cite[thm.~4.7]{PloPotSteTas18} that the Fourier series $\mathrm F\tilde f$ converges to $\tilde f$ uniformly on $\T^d$.
    For $\bm{x} \in \T^d$ and $\Omega\subset\Z^d$, we obtain
    \begin{equation*}
      \big\lvert \tilde{f} (\bm{x})- \mathrm F_\Omega \tilde{f}(\bm{x})\big\rvert
      = \Big\lvert \sum_{\bm{n} \in \Z^d} c_{\bm{n}}(\tilde{f})\, \e^{\im \inn{\bm{n},\bm{x}}} -\sum_{\bm{n} \in \Omega}c_{\bm{n}}(\tilde{f})\, \e^{\im \inn{\bm{n},\bm{x}}}\Big\rvert
      \le \sum_{\bm{n} \in \Z^d\setminus \Omega} \big\lvert c_{\bm{n}}(\tilde{f})\, \e^{\im \inn{\bm{n},\bm{x}}}\big\rvert.
    \end{equation*}
    \Cref{thm:manifold_DFS} now directly yields the second statement.
\end{proof}

We prove explicit bounds on the speed of convergence for the special cases of rectangular and {circular partial Fourier sums} \cite[p.~7~f.]{KhaNik92}. The proof of the following technical lemma, which is based on \cite[Thm.~3.2.16]{Gra08}, is found in \Cref{app:proof}.

\begin{lem}\label{lem:diadic_bds}
	Let $k \in \NN$, $0<\alpha < 1$, and $g \in \mathcal{C}^{k,\alpha}(\T^d)$. Then, we have for all $\ell \in \NN$
  \begin{equation}\label{eq:diadic_bds}
  	\sum_{\bm n \in \Z^d, \, 2^\ell\le \abs{\bm n} < 2^{\ell+1}} \abs{c_{\bm{n}}(g)} \le 2^{d-\alpha} \, d^{k+\frac{3}{2}} \, \pi^\alpha \, 2^{\ell\pare{\frac{d}{2}-(k+\alpha)}} \, \abs{g}_{\mathcal{C}^{k,\alpha}(\T^d)}.
  \end{equation}
\end{lem}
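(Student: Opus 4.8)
The bound \eqref{eq:diadic_bds} is the multivariate analogue of Bernstein's theorem, and I would prove it by the classical Bernstein argument underlying \cite[Thm.~3.2.16]{Gra08}: combine Cauchy--Schwarz over the dyadic block with an $L_2$ energy estimate for the Fourier coefficients of the top-order derivatives of $g$, the latter stemming from the $L_2$-modulus of continuity. The two elementary identities driving everything are the differentiation rule $c_{\bm n}(\mathrm D^{k\bm e^j}g)=(\im n_j)^k\,c_{\bm n}(g)$ and the translation rule $c_{\bm n}\pare{u(\cdot+\bm t)}=\e^{\im\inn{\bm n,\bm t}}\,c_{\bm n}(u)$, together with Parseval's identity $\sum_{\bm n\in\Z^d}\abs{c_{\bm n}(u)}^2=\norm{u}_{L_2(\T^d)}^2$ in the normalized inner product of \Cref{defin:F_torus}.

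First I would record the one-sided energy estimate. Fixing a coordinate $j\in[d]$, I set $h_j\coloneqq\mathrm D^{k\bm e^j}g\in\mathcal C^\alpha(\T^d)$, which satisfies $\abs{h_j}_{\mathcal C^\alpha(\T^d)}\le\abs{g}_{\mathcal C^{k,\alpha}(\T^d)}$ since $\abs{k\bm e^j}=k$. For a shift $\bm t=t\,\bm e^j$ the translation rule and Parseval give
\[
  \sum_{\bm n\in\Z^d}\abs{c_{\bm n}(h_j)}^2\,\bigl\lvert 1-\e^{\im n_j t}\bigr\rvert^2
  =\norm{h_j-h_j(\cdot+t\bm e^j)}_{L_2(\T^d)}^2
  \le\abs{h_j}_{\mathcal C^\alpha(\T^d)}^2\,\abs{t}^{2\alpha},
\]
where the last step uses $\norm{u}_{L_2(\T^d)}\le\norm{u}_{\mathcal C(\T^d)}$ and the $\alpha$-Hölder bound on $h_j$. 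The point is to choose $t$ (or to average $t$ over an interval of length $\asymp 2^{-\ell}$) so that $\lvert 1-\e^{\im n_j t}\rvert$ is bounded below by a fixed positive constant for every relevant $\bm n$; restricting the left-hand sum to those $\bm n$ and inserting $\abs{c_{\bm n}(h_j)}=\abs{n_j}^k\abs{c_{\bm n}(g)}$ then produces a block bound of the form $\sum_{\bm n}\abs{c_{\bm n}(g)}^2\lesssim 2^{-2\ell(k+\alpha)}\abs{g}_{\mathcal C^{k,\alpha}(\T^d)}^2$, with a dimension-dependent constant and the correct $\ell$-exponent.

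To organize the geometry I would partition the block $A\coloneqq\{\bm n\in\Z^d:2^\ell\le\abs{\bm n}<2^{\ell+1}\}$ as $A=\bigcup_{j=1}^d A_j$, where $A_j$ collects those $\bm n\in A$ whose largest entry in modulus is $\abs{n_j}$ (ties broken by the least index, so the $A_j$ are disjoint). On $A_j$ one has $\abs{n_j}=\norm{\bm n}_\infty\ge\abs{\bm n}/d\ge 2^\ell/d$, which both controls the factor $\abs{n_j}^{-k}$ and confines the frequencies $n_j$ to a fixed band, as needed for the oscillation lower bound of the previous step. Thus the energy estimate yields $\sum_{\bm n\in A_j}\abs{c_{\bm n}(g)}^2$ bounded by a constant times $2^{-2\ell(k+\alpha)}\abs{h_j}_{\mathcal C^\alpha(\T^d)}^2$.

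Finally I would pass from $\ell^2$ to $\ell^1$ on each piece by Cauchy--Schwarz, $\sum_{\bm n\in A_j}\abs{c_{\bm n}(g)}\le(\#A_j)^{1/2}\bigl(\sum_{\bm n\in A_j}\abs{c_{\bm n}(g)}^2\bigr)^{1/2}$, estimate the lattice count $\#A_j\le\#A$ by the volume of the $1$-ball of radius $2^{\ell+1}$, which is $\asymp 2^{\ell d}$, then sum over $j$ using $\sum_{j}(\#A_j)^{1/2}\le\sqrt d\,(\#A)^{1/2}$ and $\abs{h_j}_{\mathcal C^\alpha(\T^d)}\le\abs{g}_{\mathcal C^{k,\alpha}(\T^d)}$, and collect the powers of $2^\ell$ to reach the exponent $\tfrac{d}{2}-(k+\alpha)$. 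The main obstacle is not the architecture of the argument but the constant bookkeeping: one must pick the translation scale and averaging window so that the oscillation lower bound and the Hölder factor $\abs{t}^{2\alpha}$ are simultaneously sharp across the factor-$2d$ range of $\abs{n_j}$ on $A_j$, and one must use a genuinely tight lattice-point count for the $1$-ball (i.e.\ dividing by $d!$, not bounding by the full cube $[-2^{\ell+1},2^{\ell+1}]^d$), so as to land exactly on the stated constant $2^{d-\alpha}\,d^{k+3/2}\,\pi^\alpha$ rather than a cruder multiple of it.
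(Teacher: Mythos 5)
Your proposal follows essentially the same route as the paper's proof (itself adapted from \cite[Thm.~3.2.16]{Gra08}): Cauchy--Schwarz over the dyadic block, a partition of the block according to which coordinate realizes $\norm{\bm n}_\infty$, the translation--difference energy estimate via Parseval and the differentiation identity, and the oscillation lower bound $\lvert\e^{\im n_j t}-1\rvert\ge\tfrac2\pi\abs{n_jt}\ge\tfrac1d$ for the fixed shift $t=\pi/2^{\ell+1}$. One small correction to your closing remark: the stated constant $2^{d-\alpha}d^{k+3/2}\pi^\alpha$ does not require the tight $1$-ball lattice count with the $d!$ factor --- the paper reaches it with the crude cube bound $\#\{\bm n:\abs{\bm n}<2^{\ell+1}\}\le 2^{\ell d+2d}$, so your sharper count would only improve the constant.
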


\begin{thm} \label{thm:hoelSrate}
  Let $k\in \NN$, $0<\alpha \le 1$ such that $2\pare{k+\alpha}>d$ and let $f\in\mathcal{C}^{k,\alpha}(\M)$. We define the circular partial DFS Fourier sums $\mathrm K_h f \coloneqq \mathrm S_{\Omega_h}f, \, h \in \N$, associated with ${\Omega_h=\{\bm{n} \in \Omega_\phi \mid \abs{\bm{n}} \le h\}}$. It holds that
  \begin{equation} \label{eq:hoelSrate}
    \norm{f - \mathrm K_h f}_{\mathcal{C}(\M)} 
    \le M_{d,d^\prime \hspace*{-2.5pt},k,\alpha}\, \norm{f}_{\mathcal{C}^{k,\alpha}(\M)}\, h^{\frac{d}{2}-k-\alpha},
  \end{equation}
  where 
  \begin{equation}\label{eq:M_bound1}
  	M_{d,d^\prime\hspace*{-2.5pt},k,\alpha}\coloneqq \frac{2^{\frac{d}{2}+k+1-\lfloor\alpha\rfloor} \, d^{k+2} \, \pi^\alpha \,  (k+d^\prime)!}{(1-2^{\frac{d}{2}-k-\alpha})\, (d^\prime-1)!}
    \quad \text{for } k+d^\prime\ge 2
  \end{equation} 
  and
  $
  	M_{1,1,0,\alpha}\coloneqq {2^{\frac{1}{2}+\lfloor\alpha\rfloor} \, \pi^\alpha }/{(1-2^{\frac{1}{2}-\alpha})}
  $. Here, $\lfloor \cdot \rfloor$ denotes rounding down to an integer.
\end{thm}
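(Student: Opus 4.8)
The plan is to start from the uniform error estimate in \Cref{thm:hoelScon}, which gives $\norm{f - \mathrm K_h f}_{\mathcal{C}(\M)} \le \sum_{\bm n \in \Z^d \setminus \mathcal M(\Omega_h)} \abs{c_{\bm n}(\tilde f)}$, and to reduce the right-hand side to a clean tail sum over $\{\abs{\bm n} > h\}$. First I would observe that each reflection $\mathrm M^i$ is a diagonal $\pm 1$ matrix and hence preserves the $1$-norm, so $\abs{\mathrm M^I(\bm n)} = \abs{\bm n}$ for every $I \subset [p]$. Combined with \eqref{eq:redundancy} and \eqref{eq:non-zero}, this identifies $\mathcal M(\Omega_h) = \{\bm n \in \Z^d \mid \abs{\bm n} \le h,\ r_{\bm n,\bm n} \ne 0\}$: the inclusion $\subseteq$ is immediate from norm-invariance together with \eqref{eq:non-zero}, while $\supseteq$ follows since any $\bm n$ with $r_{\bm n,\bm n}\ne0$ equals $\mathrm M^I(\bm n')$ for some $\bm n' \in \Omega_\phi$ with $\abs{\bm n'} = \abs{\bm n} \le h$, whence $\bm n' \in \Omega_h$. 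Thus $\Z^d \setminus \mathcal M(\Omega_h) = \{\abs{\bm n} > h\}\ \dot{\cup}\ \{\abs{\bm n}\le h,\ r_{\bm n,\bm n}=0\}$, and because $\tilde f$ is a BMC function its coefficients vanish whenever $r_{\bm n,\bm n}=0$ (exactly as in the proof of \Cref{thm:manifold_DFS}); the second set therefore contributes nothing, leaving $\norm{f - \mathrm K_h f}_{\mathcal{C}(\M)} \le \sum_{\abs{\bm n} > h} \abs{c_{\bm n}(\tilde f)}$.

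For $0<\alpha<1$ I would estimate this tail by a dyadic decomposition. Setting $\ell_0 \coloneqq \lfloor \log_2 h \rfloor$, the set $\{\abs{\bm n}>h\}$ is contained in $\bigcup_{\ell \ge \ell_0}\{\bm n \mid 2^\ell \le \abs{\bm n} < 2^{\ell+1}\}$, so summing \Cref{lem:diadic_bds} annulus by annulus gives $\sum_{\abs{\bm n}>h}\abs{c_{\bm n}(\tilde f)} \le 2^{d-\alpha} d^{k+3/2}\pi^\alpha \abs{\tilde f}_{\mathcal{C}^{k,\alpha}(\T^d)}\sum_{\ell\ge\ell_0}2^{-\ell\gamma}$ with $\gamma \coloneqq k+\alpha-\tfrac d2 > 0$ by the hypothesis $2(k+\alpha)>d$. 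The geometric series equals $2^{-\ell_0\gamma}/(1-2^{-\gamma})$, and since $2^{\ell_0} > h/2$ we have $2^{-\ell_0\gamma} < 2^\gamma h^{-\gamma} = 2^\gamma h^{\frac d2 - k - \alpha}$. Collecting powers of two via $2^{d-\alpha}\cdot 2^\gamma = 2^{\frac d2 + k}$ and substituting the bound $\abs{\tilde f}_{\mathcal{C}^{k,\alpha}(\T^d)} \le 2\sqrt d\,\tfrac{(k+d^\prime)!}{(d^\prime-1)!}\norm{f}_{\mathcal{C}^{k,\alpha}(\M)}$ from \Cref{thm:hH_bound} (for $k+d^\prime\ge2$) reproduces \eqref{eq:hoelSrate} with the constant \eqref{eq:M_bound1} in the case $\lfloor\alpha\rfloor=0$, using $d^{k+3/2}\sqrt d = d^{k+2}$ and $1-2^{-\gamma} = 1-2^{\frac d2-k-\alpha}$.

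The case $\alpha=1$, where $f\in\mathcal{C}^{k+1}(\M)$, needs care because \Cref{lem:diadic_bds} is only stated for $\alpha<1$; this is the main obstacle. Here I would exploit that \Cref{thm:hC_bound} provides, for every $\beta\in(0,1)$, the bound $\abs{\tilde f}_{\mathcal{C}^{k,\beta}(\T^d)} \le \sqrt d\,\tfrac{(k+d^\prime)!}{(d^\prime-1)!}\norm{f}_{\mathcal{C}^{k+1}(\M)}$ whose right-hand side is \emph{independent} of $\beta$. Applying \Cref{lem:diadic_bds} with this $\beta$ to each annulus and letting $\beta\to1^-$ — the left-hand annulus sums do not depend on $\beta$ and the right-hand side is continuous in $\beta$ — yields the $\alpha=1$ analogue of \eqref{eq:diadic_bds}, i.e.\ the same estimate with $\alpha$ replaced by $1$. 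Summing the geometric series exactly as before, now with $\gamma = k+1-\tfrac d2$, and noting that the constant from \Cref{thm:hC_bound} carries the factor $\sqrt d$ rather than the $2\sqrt d$ of \Cref{thm:hH_bound}, recovers \eqref{eq:M_bound1} in the case $\lfloor\alpha\rfloor=1$; the missing factor $2$ is precisely what the term $-\lfloor\alpha\rfloor$ in the exponent encodes. Finally, in the degenerate case $k+d^\prime=1$ (so $d=d^\prime=1$, $k=0$) I would run the identical argument but insert the special one-dimensional estimates $\abs{\tilde f}_{\mathcal{C}^\alpha(\T^1)}\le\norm{f}_{\mathcal{C}^\alpha(\M)}$ for $\alpha<1$ and $\abs{\tilde f}_{\mathcal{C}^\alpha(\T^1)}\le 2\norm{f}_{\mathcal{C}^1(\M)}$ for $\alpha=1$ from \Cref{thm:hH_bound,thm:hC_bound}, which produces the stated $M_{1,1,0,\alpha}$. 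Beyond the $\alpha=1$ limiting step, the only real work is the bookkeeping needed to make the powers of two, of $d$, and of $\pi$ land on the exact constant \eqref{eq:M_bound1}.
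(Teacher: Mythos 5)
Your proposal is correct and follows essentially the same route as the paper's proof: reduce to the tail sum $\sum_{\abs{\bm n}>h}\abs{c_{\bm n}(\tilde f)}$ via \Cref{thm:hoelScon}, decompose dyadically using \Cref{lem:diadic_bds}, sum the geometric series, insert the seminorm bounds from \Cref{thm:hH_bound} (resp.\ \Cref{thm:hC_bound}), and handle $\alpha=1$ by a limiting argument in the Hölder exponent. The only (harmless) differences are that you justify the identification of $\Z^d\setminus\mathcal M(\Omega_h)$ more explicitly than the paper does, and that you pass to the limit $\beta\to1^-$ annulus by annulus rather than after summing the series.
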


\begin{proof}
	 By \Cref{thm:hoelScon}, we can bound the left-hand-side by the sum over the remaining Fourier coefficients. We observe that $\mathcal M(\Omega_h) = \{\bm n \in \Z^d \mid \abs{\bm n} \le h,\allowbreak {r_{\bm n,\bm n}\neq0}\}$. This yields
  \begin{equation*}
    \big\lVert f - \mathrm K_h f\big\rVert_{\mathcal{C}(\M)} 
    \le \sum_{\bm{n} \in \mathbb{Z}^d, \,\abs{\bm{n}} >h} 
    \big\lvert c_{\bm{n}}(\tilde{f})\big\rvert \le \sum_{\ell=\lfloor \log_2 h \rfloor}^\infty \,\sum_{\bm n \in \Z^d, \, 2^\ell \le \abs{\bm n} < 2^{\ell+1}} \big\lvert c_{\bm n}(\tilde f) \big\rvert.
  \end{equation*}
  If $\alpha<1$, we apply \Cref{thm:hC_bound} to get $\tilde f \in \mathcal{C}^{k,\alpha}(\T^d)$ and thus we obtain by \Cref{lem:diadic_bds} that
  \begin{equation*}
  	\big\lVert f - \mathrm K_h f \big\rVert_{\mathcal{C}(\M)}\le 2^{d-\alpha} \, d^{k+\frac{3}{2}} \, \pi^\alpha \, \big\lvert \tilde f \big\rvert_{\mathcal{C}^{k,\alpha}(\T^d)} \sum_{\ell = \lfloor \log_2 h \rfloor}^\infty 2^{\ell \pare{\frac{d}{2}-(k+\alpha)}}.
  \end{equation*}
  Since $d/{2}-(k+\alpha)<0$ and $2^{\lfloor \log_2 h \rfloor} \ge h/2$, we can evaluate the geometric sum
  \begin{equation*}
  	\sum_{\ell = \lfloor \log_2 h \rfloor}^\infty
  2^{\ell\pare{\frac{d}{2}-(k+\alpha)}}
  = \frac{2^{\lfloor \log_2 h \rfloor \pare{\frac{d}{2}-k-\alpha}}}{1-2^{\frac{d}{2}-k-\alpha}}
  \le \frac{2^{k+\alpha-\frac{d}{2}} \, h^{\frac{d}{2}-k-\alpha}}{1-2^{\frac{d}{2}-k-\alpha}}.
  \end{equation*}
 \Cref{thm:hH_bound} yields that, if $k+d^\prime\ge 2$, we have $\abs{\smash{\tilde f}}_{\mathcal{C}^{k,\alpha}(\T^d)}\le 2  \sqrt{d} \, \frac{(k+d^\prime)!}{(d^\prime-1)!} \, \norm{f}_{\mathcal{C}^{k,\alpha}(\M)}$, and, if $k+d^\prime=1$, we have $\abs{\smash{\tilde f}}_{\mathcal{C}^{0,\alpha}(\T^1)}\le \norm{f}_{\mathcal{C}^{0,\alpha}(\M)}$. Thus, we overall obtain \eqref{eq:hoelSrate}. 
  In the case of $\alpha=1$, we use \Cref{thm:hC_bound} to deduce $\tilde f \in \mathcal{C}^{k,1-\varepsilon}(\T^d)$ for any $0<\varepsilon<1$. Choosing $\varepsilon$ small enough, we have ${d}/{2}-k-1+\varepsilon<0$ and obtain, by the same arguments as before, that
  \begin{equation*}
  	\big\lVert f -\mathrm K_h f\big\rVert_{\mathcal{C}(\M)} \le 2^{d-(1-\varepsilon)} \, d^{k+\frac{3}{2}} \, \pi^{1-\varepsilon} \, \big\lvert \tilde f \big\rvert_{\mathcal{C}^{k,1-\varepsilon}(\T^d)} \, \frac{2^{k+1-\varepsilon-\frac{d}{2}} \, h^{\frac{d}{2}-k-1+\varepsilon}}{1-2^{\frac{d}{2}-k-1+\varepsilon}}. 
  \end{equation*}
  By \eqref{eq:hC_bound}, we have $\abs{\smash{\tilde{f}}}_{\mathcal{C}^{k,1-\varepsilon}(\T^d)}\le \sqrt{d} \, \frac{(k+d^\prime)!}{(d^\prime-1)!} \, \norm{f}_{\mathcal{C}^{k,1}(\M)}$ if $k+d^\prime\ge 2$ and if $k+d^\prime=1$, we have $\abs{\smash{\tilde f}}_{\mathcal{C}^{0,1-\varepsilon}(\T^1)}\le 2 \norm{f}_{\mathcal{C}^{0,1}(\M)}$. Since the rest of the expression on the right-hand-side of the last inequality is continuous in $\varepsilon$, we can pass to the limit $\varepsilon\to1$ and finally obtain \eqref{eq:hoelSrate} for $\alpha=1$.
\end{proof}

\begin{rem}\label{rem:rect}
	An analogue to \Cref{thm:hoelSrate} still holds for the \emph{rectangular partial Fourier sums }$\mathrm S_{\Omega_h} f$, $h \in \N$, associated with $\Omega_h=\{\bm{n}\in \Omega_\phi \mid \norm{\bm n}_{\infty}<h\}$. In \Cref{app:proof}, we show that in this situation, the bound \eqref{eq:diadic_bds} and therefore the constant in \eqref{eq:M_bound1} for $k+d^\prime\ge 2$ can be improved to
	\begin{equation*}
		M_{d,d',k,\alpha}^{\mathrm{rect}} = \frac{2^{\frac{d}{2}+k+1-\lfloor\alpha\rfloor} \, d \, \pi^\alpha \,  (k+d^\prime)!}{(1-2^{\frac{d}{2}-k-\alpha})\, (d^\prime-1)!}.
	\end{equation*}
\end{rem}
\section{The DFS method for specific manifolds}\label{sec:examples}
We present some application examples of the general DFS method. Furthermore, the DFS methods on the disk, ball, and cylinder from the literature are reviewed in the context of our generalized description, where we can now state the DFS basis functions.

\subsection{The interval}
A ``toy example'' of our general framework is the following generalized DFS method on the one-dimensional manifold $\M=[-1,1] \subset \R$.  We can easily show that
\begin{equation}\label{eq:dfs_interval}
  \phi_{[-1,1]} \colon \T^1 \to [-1,1], \, x \mapsto \cos x
\end{equation}
is a generalized DFS transform of $\M$ with symmetry number $p=1$ and symmetry function $s^1(x)= -x$, $x \in \T^1.$ Restricted to $D_1\coloneqq [0,\pi]$, the map $\phi_{[-1,1]}$ is bijective with its continuous inverse given by the arccosine. The derivative $\nabla\phi_{[-1,1]}(x) = \sin(x)$ is non-zero in $D_1^\circ=(0,\pi)$. We set $D_2\coloneqq \emptyset$.

We have $\mathcal{M}(n)=\{n,-n\}$ and $\mathrm N^1(n)=0$ for all $n \in \Z$, thus we can set $\Omega_{[-1,1]}\coloneqq \NN$. For $n \in \NN$ and $\xi \in [-1,1]$, we obtain by \Cref{defin:F_manifold} that
\begin{equation*}
	b_n(\xi) =e_n\Big(\big(\restr{\phi_{[-1,1]}}{[0,\pi]}\big)^{-1}(\xi)\Big) =\begin{cases}\e^{\im n \arccos \xi}+\e^{-\im n \arccos \xi}=2 T_n(\xi), & n\not=0\\
	\e^{\im 0 \arccos \xi}=T_0(\xi), &n=0. \end{cases}
\end{equation*}
Here, $T_n$ denotes the $n$-th \emph{Chebyshev polynomial of the first kind} defined by 
\begin{equation*}
	T_n(\xi) \coloneqq \cos(n \arccos \xi), \qquad  \xi \in [-1,1].
\end{equation*}
As in \Cref{thm:weighted}, the transform $\phi_{[-1,1]}$ induces a weighted Hilbert space $\tilde{L}_2(-1,1)$, where the inner product of $f_1,f_2 \in \tilde{L}_2(-1,1)$ is given by
\begin{align*}
  \inn{f_1,f_2}_{\tilde{L}_2(-1,1)} & \coloneqq \frac{1}{\pi} \int_{-1}^1 \frac{f_1(\xi) \, \overline{f_2(\xi)}}{\sqrt{1-\xi^2}} \d \xi
  = \frac{1}{2\pi} \int_{\T^1} \tilde f_1(x) \overline{\tilde f_2(x)} \mathrm d x= \langle \tilde{f}_1,\tilde{f}_2 \rangle_{L_2(\T^1)}.
\end{align*}
The Chebyshev polynomials $T_n$, $n \in \NN$, form an orthogonal basis of this space. 
The $n$-th partial Chebyshev expansion of $f \in \tilde{L}(-1,1)$ coincides with the DFS Fourier sum $S_{\{0,...,n\}}f$ for all $n \in \NN$.

\begin{rem}
Our framework of generalized DFS methods of a $d$-dimensional manifold $\M$ relies on transforming a function from $\M$ to $\T^d$ and subsequently expanding it with a Fourier series. In contrast, some DFS-like methods discussed in the literature, 
namely for the unit disk \cite{WilTowWri17}, the cylinder \cite{ForTow20}, and the ball \cite{BouTow20}, use coverings where some variables are on the interval $[-1,1]$ instead of the torus $\T^1$. This also fits into our approach, since the connection between the one-dimensional Chebyshev series and a DFS Fourier series easily extends to the multivariate case, where we get mixed Fourier--Chebyshev series.
\end{rem}

\subsection{The hyperball}
\label{sec:hyperball}
We define the \emph{$d$-dimensional closed unit ball}
\begin{equation*}
	\B^d \coloneqq \{\bm u\in\R^d \mid \norm{\bm u} \le 1\}.
\end{equation*} 
Let $d\ge 2$. A parameterization of $\B^d$ is given by the \emph{spherical coordinates} 
\begin{equation*}
	\bm\xi(\rho, \bm \lambda)=\big(\xi_j(\rho, \bm \lambda)\big)_{j=1}^d, 
  \qquad 
	\xi_j(\rho, \bm \lambda ) = \begin{cases}\rho \cos(\lambda_j) \prod_{\ell=1}^{j-1} \sin(\lambda_\ell), & j<d\\
	\rho \prod_{\ell=1}^{d-1} \sin(\lambda_\ell), & j=d
	\end{cases}
\end{equation*}
for $\rho \in [0,1]$ and $\bm \lambda\in [0,\pi]^{d-2} \times [-\pi,\pi]$.
Extending the domain of the angular variables $\lambda_j$, $j \in [d-2]$, to $[-\pi,\pi]$ and substituting the radial variable $\rho$ by $\cos x_1$, we obtain a generalized DFS transform $\phi_{\B^d}$ of $\B^d$. For $j \in [d]$, the $j$-th component of $\phi_{\B^d}$ in $\bm x \in \T^d$ is given by
\begin{equation}\label{eq:ball_DFS}
    \big(\phi_{\B^d}(\bm{x})\big)_j \coloneqq \begin{cases}\cos(x_1)\cos(x_{j+1}) \prod_{\ell=2}^j \sin(x_\ell), & j<d\\
    \cos(x_1) \prod_{\ell=2}^d \sin(x_\ell), & j=d.\end{cases}
\end{equation}
We check the requirements of \Cref{defin:DFS} for $\phi_{\B^d}$.
The smoothness properties of $\phi_{\B^d}$ are obvious. The symmetry number is $p=d$ and symmetry functions are given by
\begin{equation}\label{eq:ball_sym}
\begin{alignedat}{2}
	& s^1(\bm x)=(-x_1,x_2,...,x_d), &&\qquad \bm x \in \T^d,\\ 
	& s^2(\bm x)=(x_1+\pi,x_2+\pi,x_3,...,x_d), &&\qquad \bm x \in \T^d,\\
	& s^i(\bm x)=(x_1,...,x_{i-2},-x_{i-1},x_i+\pi,x_{i+1},...,x_d), &&\qquad \bm x \in \T^d, \, 3\le i\le d. 
\end{alignedat}
\end{equation}
These symmetry functions together with the sets
\begin{align*}
	& D_1 = [0,\pi/2) \times (0,\pi)^{d-2} \times (-\pi,\pi],\\
	& D_2 = \bigcup_{j=2}^{d-1} \pare{[0,\pi/2) \times (0,\pi)^{j-2} \times \{0,\pi\} \times \{0\}^{d-j} } \cup\{\pi/2\} \times \{0\}^{d-1},
\end{align*}
understood as subsets of $\T^d$ via the canonical identification of points in $\R^d$ with their equivalence classes in $\T^d$, satisfy \ref{Prop1} to \ref{Prop4} in \Cref{defin:DFS}. To show \ref{Prop3}, we note that the arccosine is continuous and for $\bm x \in D_1$, we have $\cos(x_1)=\norm{\phi_{\B^d}(\bm x)}\not=0$ as well as $\sin(x_\ell)\not=0$ for all $\ell \in \{2,..,d-1\}$. Therefore, \eqref{eq:ball_DFS} yields for $\bm x \in D_1$ and $j\in [d-1]$ that
\begin{equation*}
	\cos(x_{j+1})=\frac{\big(\phi_{\B^d}(\bm x)\big)_j}{\cos(x_1)\prod_{\ell=2}^j\sin(x_\ell)}
  \quad\text{and} \quad
	\sin(x_d) = \frac{\big(\phi_{\B^d}(\bm x)\big)_d}{\cos(x_1)\prod_{\ell=2}^{d-1}\sin(x_\ell)},
\end{equation*}
which inductively implies that the inverse of $\restr{\phi_{\B^d}}{D_1}$ is continuous. Furthermore, we have 
$\lvert \det \nabla \phi_{\B^d}(\bm x)\rvert
= \sin (x_1) \cos^{d-1}(x_1) \prod^{d-1}_{j=2} \sin^{d-j}(x_j)\not=0$ for $\bm x \in D_1^\circ$, which is \ref{Prop4}.
For $\bm n \in \Z^d$ we have
\begin{equation*}
	\mathcal{M}(\bm n)=\{\bm m \in \Z^d \mid m_j=\pm n_j \text{ for } j<d \text{ and } m_d=n_d\},
\end{equation*}
$\mathrm N^1(\bm n)=0$, $\mathrm N^2(\bm n)=n_1+n_2$, and $\mathrm N^i(\bm n)=n_i$ for all $3\le i\le d$. Since the reflections $\mathrm M^i$ act on pairwise disjoint sets variables, we observe that $r_{\bm n,\bm n}=0$ if and only if $\mathrm N^i(\bm n)$ is odd for some $i \in [d]$ with $\mathrm M^i(\bm n)=\bm n$, i.e., if and only if $n_1+n_2$ is odd or $n_{i-1}=0$ and $n_i$ is odd for some $3 \le i \le d$. We define
\begin{equation*}
	\Omega_{\B^d} \coloneqq \mleft\{\bm n \in \NN^{d-1}\times \Z \;\middle|\;  \begin{alignedat}{1}& n_1+n_2 \text{ is even and } (n_{i-1}\not=0 \text{ or } n_i \text{ is even for all }3\le i\le d) \end{alignedat}\mright\}.
\end{equation*}
Let $\bm n \in \Omega_{\B^d}$. We set $I_{\bm n}\coloneqq \{i \in \{3,...,d\} \mid n_{i-1}\not=0\}$ and $J_{\bm n}\coloneqq I_{\bm n}$ if $n_1=0$ and $J_{\bm n} \coloneqq I_{\bm n}\cup \{1\}$ otherwise. Then the map
$
	I \mapsto \mathrm M^I(\bm n) 
$
is bijective from $J_{\bm n}$ into $\mathcal{M}(\bm n)$. By \eqref{eq:r_n,m}, we have $r_{\bm n,\mathrm M^I(\bm n)}=(-1)^{\mathrm N^I(\bm n)} \, r_{\bm n,\bm n}$ for all $I \subset [d]$. Since $\mathrm N^1(\bm n)=0$, this yields for $I \subset I_{\bm n}$ that
\begin{equation*}
r_{\bm n,\mathrm M^{I \cup \{1\}}(\bm n)}=r_{\bm n,\mathrm M^I(\bm n)}=(-1)^{\mathrm N^I(\bm n)}=\prod_{i \in I}(-1)^{n_i}.
\end{equation*}
Furthermore, for $\bm x \in \T^d$ and $j \in [d-1]$, it holds that
\begin{equation*}
	\e^{\im n_jx_j}+(-1)^{n_{j+1}}\e^{-\im n_jx_j} = \begin{cases} 2  \cos(n_jx_j), & n_{j+1} \text{ is even}\\
	 2 \im  \sin(n_jx_j), & n_{j+1} \text{ is odd.}\end{cases}
\end{equation*}
Overall, we obtain by \eqref{eq:en} the basis function
\begin{align*}
	e_{\bm n}(\bm x) &=\sum_{\bm m \in \mathcal{M}(\bm n)} r_{\bm n,\bm m} \, \e^{\im \inn{\bm m,\bm x}}=\sum_{I \subset J_{\bm n}} r_{\bm n,\mathrm M^I(\bm n)} \, \e^{\im \inn{\mathrm M^I(\bm n),\bm x}}\\
	&= \sum_{I \subset I_{\bm n}} \frac{\e^{\im n_1 x_1}+\e^{-\im n_1x_1}}{2^{1+\#I_{\bm n}-\#J_{\bm n}}} \, \Big(\prod_{i \in I_{\bm n} \setminus I} \e^{\im n_{i-1}x_{i-1}}\Big) \, \Big(\prod_{i \in I} (-1)^{n_i} \, \e^{-\im n_{i-1}x_{i-1}}\Big) \, \e^{\im n_dx_d} \\
	&= \frac{\e^{\im n_1x_1}+\e^{-\im n_1x_1}}{2^{1+\#I_{\bm n}-\#J_{\bm n}}}\, \Big( \prod_{\substack{2\leq j \leq d-1\\n_j\not=0}} \big( \e^{\im n_j x_j}+(-1)^{n_{j+1}} \, \e^{-\im n_j x_j}\big) \Big)\, \e^{\im n_d x_d}\\
	&= 2^{\#J_{\bm n}} \, \cos(n_1x_1) \Big(\prod_{\substack{2\le j \le d-1\\n_j\not=0\\n_{j+1} \text{ even}}} \cos(n_jx_j)\Big) \, \Big(\prod_{\substack{2\le j \le d-1\\n_j\not=0\\ n_{j+1} \text{ odd}}} \im \sin(n_jx_j)\Big) \, \e^{\im n_d x_d}.
\end{align*}
Let $\rho \in [0,1], \bm \lambda \in [0,\pi]^{d-2}\times [-\pi,\pi]$ such that $(\arccos(\rho),\bm \lambda) \in D_1 \cup D_2$. For $\bm n \in \Omega_{\B^d}$, we obtain from the definition of $b_{\bm n}$ in \eqref{eq:bn} that
\begin{equation*}
	b_{\bm n}(\bm \xi(\rho,\bm\lambda))=
			2^{\#\{j \in [d-1] \mid n_j\not=0\}} T_{n_1}(\rho)\Big(\smashoperator{\prod_{\substack{j\in[d-2]\\n_{j+1}\not=0\\n_{j+2} \text{ even}}}} \cos(n_{j+1}\lambda_j)\Big) \Big(\smashoperator{\prod_{\substack{j\in[d-2]\\n_{j+1}\not=0\\ n_{j+2} \text{ odd}}}} \im \sin(n_{j+1}\lambda_j)\Big) \, \e^{\im n_d \lambda_{d-1}}.
\end{equation*}

For the special cases of $d \in \{2,3\}$, DFS methods of $\B^d$ already exist in the literature. In the following, we discuss their connection to our general framework in more detail.

\paragraph{The disk}
The \emph{unit disk} $\D \coloneqq \B^2$ can be parameterized by the \emph{polar coordinates}
\begin{equation*}
	\bm\xi(\rho, \lambda) = (\rho \cos \lambda,\rho \sin \lambda), \qquad (\rho,\lambda) \in [0,1] \times \T^1.
\end{equation*}
The map $\bm \xi$ is $2\pi$-periodic in the angular variable $\lambda$, thus a series expansion of a function on the unit disk can be realized directly by a shifted Chebyshev--Fourier expansion in polar coordinates, cf.\ \cite[§~18.5]{Boy00}. In 1995, Fornberg \cite{For95} presented an alternative approach; to eliminate the boundary at $\rho=0$ via extending the domain of the radius $\rho$ to $[-1,1]$. The resulting \emph{extended polar coordinates} act on $[-1,1] \times \T^1$ and cover the disk twice.
A DFS-like method of the disk consists of transforming functions to the extended polar coordinates and subsequently expanding them via a Chebyshev--Fourier series, see \cite{WilTowWri17}. Substituting $(\rho,\lambda)=(\cos x_1,x_2)$ with $(x_1,x_2) \in \T^2$ yields the DFS transform $\phi_{\D}$ from \eqref{eq:ball_DFS}. This substitution changes the symmetry structure: $\phi_{\D}$ covers the disk four times instead of twice and satisfies 
\begin{equation*}
	\phi_{\D}(x_1,x_2)=\phi_{\D}(-x_1,x_2)=\phi_{\D}(x_1+\pi,x_2+\pi), \qquad (x_1,x_2) \in \T^2,
\end{equation*}
which agrees with the symmetry functions from \eqref{eq:ball_sym}. This is illustrated in \Cref{fig:disk}.
\begin{figure}[htb]
	\centering	
	\includegraphics[width=\textwidth,trim={40 20 40 20}, clip]{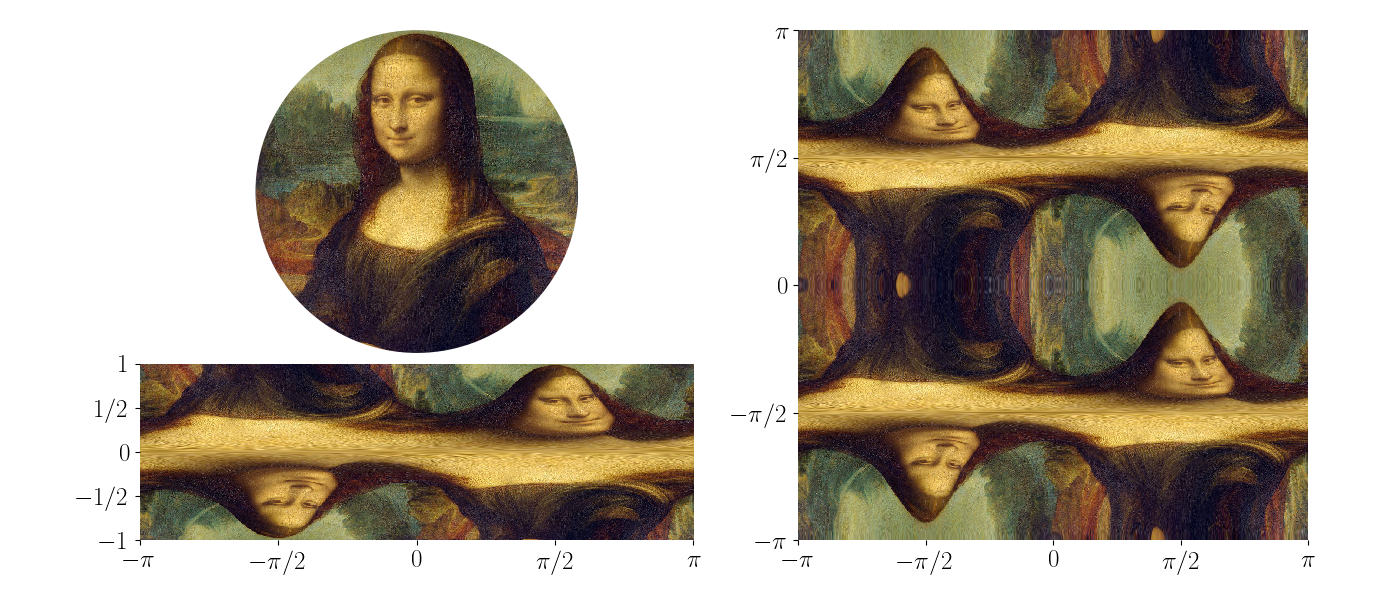}
  \caption{ Top left: A disk-shaped section $f(\bm\xi)$ of the Mona Lisa. Bottom left: $f(\bm\xi(\rho,\lambda))$ in extended polar coordinates. Right: DFS function $f\circ\phi_D(\bm x)$. \label{fig:disk}}
\end{figure}

Let $\bm n \in \Omega_{\D}\coloneqq\{\bm n \in \NN \times \Z \mid n_1+n_2 \text{ is even}\}$. For $(\rho,\lambda) \in (0,1] \times [-\pi,\pi] \cup \{(0,0)\}$, we have
\begin{equation*}
	b_{\bm n}(\bm \xi(\rho,\lambda))=
  \e^{\im n_2 \lambda} \,
	\begin{cases}
		1, &n_1=0\\
		2 \, T_{n_1}(\rho) , &n_1\not=0.
	\end{cases}
\end{equation*}

\paragraph{The ball}
The three-dimensional ball $\B^3$ is parameterized by the \emph{spherical coordinates}
\begin{equation*}
	\bm\xi(\rho, \theta, \lambda) = ( \rho \cos \theta, \rho \cos \lambda \sin \theta, \rho \sin \lambda \sin \theta), \qquad (\rho,\theta,\lambda)\in [0,1] \times [0,\pi]\times [-\pi,\pi].
\end{equation*}
Extending the domain of the radius $\rho$ to $[-1,1]$ and the polar angle $\theta$ to $[-\pi,\pi]$,
we obtain so-called \emph{extended spherical coordinates}.
Functions defined on the ball and represented in these extended spherical coordinates can then be expanded into Chebyshev--Fourier--Fourier series, cf.\ \cite{BouTow20}. 
The DFS transform $\phi_{\B^3}$ from \eqref{eq:ball_DFS} is obtained by substituting $(\rho,\theta,\lambda)=(\cos x_1,x_2,x_3)$ with $(x_1,x_2,x_3) \in \T^3$
thus covering the ball eight times. 

Let $(\rho,\theta,\lambda) \in (0,1] \times (0,\pi) \times [-\pi,\pi]$, or $(\rho,\theta,\lambda)=(0,0,0)$, or $\rho \in (0,1]$, $\theta \in \{0,\pi\}$ and $\lambda=0$. For $\bm n \in 	\Omega_{{\B^3}} = \{\bm n \in \NN^2 \times \Z \mid n_1+n_2 \text{ is even and } n_3 \text{ is even if } n_2=0\}$, we have
\begin{equation*}
	b_{\bm n}(\bm \xi(\rho,\theta,\lambda))=
  2^{\#\{j \in [2]\mid n_j\not=0\}}\, T_{n_1}(\rho)\, \e^{\im n_3 \lambda}\,
	\begin{cases}
		\hphantom{\i}  \cos(n_2\theta) ,& n_3 \text{ is even}\\
		\im \,  \sin(n_2\theta) ,	& n_3 \text{ is odd}.
	\end{cases}
\end{equation*}

\subsection{The hypersphere}
We define the $d$-dimensional unit sphere 
\begin{equation*}
	\S^d \coloneqq \{\bm u \in \R^{d+1} \mid \norm{\bm u}=1\},
\end{equation*}
which is a smooth submanifold of $\R^{d'}$ with $d'=d+1$.
This subset of the ball $\B^{d+1}$ can be parameterized by restricting the $(d+1)$-dimensional spherical coordinates to $\rho=1$.  A DFS transform of $\S^d$ is obtained by restricting the DFS transform $\phi_{\B^{d+1}}$ from \eqref{eq:ball_DFS} to $x_1=\arccos (1)=0$, i.e.,
\begin{equation*}
	\phi_{\S^d} \colon \T^d \to \C, \, \bm x \mapsto \phi_{\B^{d+1}}(0,\bm x).
\end{equation*}
One can easily show that the DFS smoothness and symmetry properties of $\phi_{\B^{d+1}}$ transfer to $\phi_{\S^d}$, hence $\phi_{\S^d}$ is a DFS transform of $\S^d$ with symmetry number $p=d-1$.
By a similar derivation as for the DFS method of $\B^{d+1}$ in \Cref{sec:hyperball},
we can set
\begin{equation*}
  \Omega_{\S^d} \coloneqq \{\bm n \in \NN^{d-1} \times \Z \mid n_i\not=0 \text{ or } n_{i+1} \text{ even, for all }i<d\}
\end{equation*} 
and we obtain for all $\bm n \in \Omega_{\S^d}$ and $\bm \lambda \in D_1 \cup D_2$ that
\begin{equation*}
	b_{\bm n}(\bm \xi(0,\bm \lambda))= 2^{\# \{j \in [d-1] \mid n_j\not=0\}} \Big(\smashoperator{\prod_{\substack{j \in [d-1]\\n_j\not=0\\n_{j+1} \text{ even}}}} \cos(n_j \lambda_j)\Big) \Big(\smashoperator{\prod_{\substack{j \in [d-1]\\n_j\not=0\\ n_{j+1} \text{ odd}}}} \im \sin(n_j\lambda_j)\Big) \, \e^{\im n_d \lambda_d},
\end{equation*}
where
\begin{equation*}
  D_1=(0,\pi)^{d-1} \times (-\pi,\pi],
  \qquad
  D_2= \bigcup_{j=1}^{d-1} \pare{(0,\pi)^{j-1}\times \{0,\pi\} \times \{0\}^{d-j}}.
\end{equation*}

\subsection{Product manifolds}\label{sec:product}

Let $\phi_1$ and $\phi_2$ be generalized DFS transforms of some $d_1$-dimensional manifold $\M_1 \subset \R^{d_1^\prime}$ and some $d_2$-dimensional manifold $\M_2 \subset \R^{d_2^\prime}$, respectively. It is not hard to verify that
\begin{equation*}
	\phi \colon \T^{d_1}\times \T^{d_2}  \to \M_1 \times \M_2, \, \bm x=(\bm x_1,\bm x_2)\mapsto \phi(\bm x)=(\phi_1(\bm x_1),\phi_2(\bm x_2))
\end{equation*}
defines a generalized DFS transform of the product manifold $\M_1 \times \M_2 \subset \R^{d_1^\prime+d_2^\prime}$. The smoothness properties are straightforward and the symmetry properties are as follows:
For $\ell \in \{1,2\}$, let $p_\ell$ denote the symmetry number, $s^i_\ell$, $i \in [p_\ell]$ the symmetry functions,
and $D_1^\ell$ and $D_2^\ell$ the subsets from \Cref{defin:DFS} 
with respect to $\phi_\ell$. 
Then, for $\phi$, we have the sets $D_1 \coloneqq D_1^1\times D_1^2$ and $D_2 \coloneqq (D_1^1 \times D_2^2) \cup (D_2^1 \times D_1^2) \cup (D_2^1\times D_2^2)$, the symmetry number $p_1+p_2$, and symmetry functions $s^i$, $i \in [p_1+p_2]$, can be chosen as
\begin{equation*}
	s^i \colon \T^{d_1} \times \T^{d_2} \to  \T^{d_1} \times \T^{d_2}, \, (\bm x_1,\bm x_2) \mapsto \begin{cases}
		\big(s_1^i(\bm x_1),\bm x_2\big), & i \le p_1\\
		\big(\bm x_1,s_2^{i-p_1}(\bm x_2)\big), & i>p_1.
	\end{cases}
\end{equation*}
Setting $\Omega_{\phi}\coloneqq \Omega_{\phi_1}\times \Omega_{\phi_2}$, simple calculations yield for all $\bm n=(\bm n_1,\bm n_2) \in \Omega_{\phi}$ and that
\begin{equation*}
	b_{\bm n}(\bm \xi)
= b_{\bm n_1}(\bm \xi_1) \, b_{\bm n_2}(\bm \xi_2)
,\qquad \bm \xi=(\bm \xi_1,\bm \xi_2) \in \M_1 \times \M_2.
\end{equation*}

\paragraph{The cylinder} An application of this product structure is the \emph{cylinder} $C\coloneqq\D\times[-1,1]$, 
where a DFS-like method was already derived in \cite{ForTow20}.
Combining our results on the disk $\D$ and the interval ${[-1,1]}$ from the previous subsections, we see that 
\begin{equation*}
	\phi_{C} \colon \T^2 \times \T^1 \to C, \, (\bm x_1,x_2) \mapsto (\phi_{\D}(\bm x_1),\phi_{[-1,1]}(x_2))
\end{equation*}
is a DFS transform of $C$ with symmetry number $3$. For $(x_1,x_2,x_3) \in \T^3\cong\T^2 \times \T^1$, the product symmetry structure yields
\begin{equation*}
	\phi_C(x_1,x_2,x_3)=\phi_C(-x_1,x_2,x_3)=\phi_C(x_1+\pi,x_2+\pi,x_3)=\phi_C(x_1,x_2,-x_3).
\end{equation*}
Let $\bm n \in \Omega_{C}\coloneqq \Omega_{\D} \times \NN= \{\bm n \in \NN \times \Z \times \NN\mid n_1+n_2 \text{ is even}\}$. In cylindrical coordinates, we have for all $(\rho,\lambda,z) \in (0,1] \times [-\pi,\pi] \times [-1,1]$ as well as for $(\rho,\lambda)=(0,0)$ and $z \in [-1,1]$ that
\begin{equation*}
	b_{\bm n}(\rho \cos \lambda, \rho \sin \lambda,z) =  2^{\#\{j \in \{1,3\} \mid n_j\not=0\}} \,  T_{n_1}(\rho) \, \e^{\im n_2 \lambda} \, T_{n_3}(z).
\end{equation*}
	
\subsection{The rotation group}
The three-dimensional \emph{rotation group}
\begin{equation*}
	\SO \coloneqq \{A \in \R^{3 \times 3} \mid \det(A)=1 \text{ and } A^{-1}=A^\intercal\}
\end{equation*}	
can be parametrized by the \emph{Euler angles}
\begin{equation*}
  (\alpha,\beta,\gamma)\mapsto R_z(\alpha)  R_y(\beta)  R_z(\gamma), \qquad (\alpha,\beta,\gamma)\in \T^1\times [0,\pi] \times \T^1,
\end{equation*}
where
\begin{equation*}
    R_z(\theta) \coloneqq 
    \begin{pmatrix} 
    	\cos \theta & -\sin \theta & 0 \\ 
    	\sin \theta & \cos \theta & 0 \\ 
    	0 & 0 & 1
    \end{pmatrix}, \qquad R_y(\theta) \coloneqq
    \begin{pmatrix} 
    	\cos \theta & 0 & \sin \theta \\ 
    	0 & 1 & 0 \\ 
    	-\sin \theta & 0 & \cos\theta
    \end{pmatrix}, \qquad \theta \in \T^1
\end{equation*}
describe rotations around the $z$-axis and the $y$-axis, respectively. Different conventions regarding the choice and order of rotational axes exist, here we employ the $zyz$-convention, cf.\ \cite[p.~4]{PotPreVol09}. 
The authors are not aware of any DFS method being used in the literature to approximate functions on $\SO$. However, extending the domain of the second Euler angle $\beta$ yields a generalized DFS transform of $\SO$ that fits within our framework. Because \Cref{defin:DFS} requires a submanifold of a Euclidean space, we employ the canonical column-wise embedding
\begin{align*}
  \R^{3,3} \ni
	A=(a_{i j})_{i,j=1}^3 \mapsto (a_{1 1},a_{2 1}, a_{3 1}, a_{1 2}, a_{2 2}, a_{3 2}, a_{1 3}, a_{2 3}, a_{3 3})
  \in \R^9,
\end{align*}
which is an isometric isomorphism from $\R^{3 \times 3}$ equipped with the Frobenius norm to $(\R^9, \norm{\cdot})$. We define a generalized DFS transform of $\SO$ by 
\begin{equation*}
	\phi_{\SO}(\bm x) \coloneqq
	R_z(x_1) R_y(x_2) R_z(x_3)
	\in \R^{3\times3} \cong \R^{9},
	\qquad \bm x \in \T^3.
\end{equation*}
We need to verify that $\phi_{\SO}$ satisfies the smoothness properties of a DFS transform. Explicit calculation of the matrix product $R_z(x_1) R_y(x_2) R_z(x_3)$ and component-wise taking of partial derivatives shows that for all $\bm \mu \in \NN^3$ and $\ell \in [9]$ the partial derivative $\mathrm D^{\bm \mu}\phi_{\ell}$ exists, where $\phi_\ell$ denotes the $\ell$-th component of $\phi_{\SO}$. Let $\bm x=(x_1,x_2,x_3) \in \T^3$. Up to translations and interchange of the variables, $\mathrm D^{\bm \mu}\phi_{\ell}$ is either a product of cosines, which is uniformly bounded by $1$, or of the form 
\begin{equation*}\label{eq:so_deriv}
		\mathrm D^{\bm{\mu}}\phi_l(\bm{x})=\cos(x_1) \cos(x_2) \cos(x_3) + \sin(x_1) \sin(x_3).
\end{equation*}
In the latter case, if $\sgn\big(\cos (x_1) \cos (x_3)\big)=\sgn\big(\sin (x_1) \sin (x_3)\big)$, we have by the triangle inequality and the angle subtraction formula that
	\begin{align*}
		\abs{\mathrm D^{\bm{\mu}}\phi_l(\bm{x})} & \le \abs{\cos(x_2)} \, \abs{\cos(x_1)\cos(x_3)}+ \abs{\sin(x_1) \sin(x_3)}\\
		& \le \abs{\cos(x_1)\cos(x_3)}+\abs{\sin(x_1)\sin(x_3)}\\
		& = \abs{\cos(x_1)\cos(x_3)+\sin(x_1)\sin(x_3)} = \abs{\cos(x_1-x_3)}\le 1.
	\end{align*}
	Otherwise, we apply the previous argument to $(-x_1,x_2+\pi,x_3)$ and obtain
	\begin{equation*}
		\abs{\mathrm D^{\bm{\mu}}\phi_l(x_1,x_2,x_3)}=\abs{-\mathrm D^{\bm{\mu}}\phi_l(-x_1,x_2+\pi,x_3)}\le 1.
	\end{equation*}
	This proves \eqref{eq:DFSinf}, i.e., $\phi_{\SO}$ satisfies the smoothness properties of a DFS transform.

One can also show that $\phi_{\SO}$ satisfies the symmetry properties of a DFS transform with symmetry number $p=1$, symmetry function
\begin{equation*}
	s^1(x_1,x_2,x_3) = (x_1+\pi,-x_2,x_3+\pi), \qquad \bm x=(x_1,x_2,x_3) \in \T^3,
\end{equation*}
and the subsets
$
	D_1 \coloneqq (-\pi,\pi] \times (0,\pi) \times (-\pi,\pi]
$
and
$
	D_2 \coloneqq (-\pi,\pi] \times \{0,\pi\} \times \{0\} .
$

For $\bm n\in\Z^3$, we have $\mathrm N^1(\bm n) = n_1 + n_3$
and 
$\mathcal M(\bm n) = \{\bm n, (n_1,-n_2,n_3)\}$.
We set 
\begin{equation*}
	\Omega_{\SO}\coloneqq \{\bm n \in \Z \times \NN \times \Z \mid n_2\not=0 \text{ or }n_1+n_3 \text{ even}\}.
\end{equation*} 
Let $(\alpha,\beta,\gamma) \in D_1 \cup D_2$. Then, the function $b_{\bm n}$, $\bm n \in \Omega_{\SO}$, is given by
\begin{equation*}
  b_{\bm n}\big(R_z(\alpha) R_y(\beta) R_z(\gamma)\big)=
  \e^{\im n_1 \alpha} \, \e^{\im n_3\gamma}
  \begin{cases}
    1, & n_2=0\\
    2 \cos(n_2\beta), & n_2\not=0, \, n_1+n_3 \text{ even}\\
    2 \im  \sin(n_2\beta) , & n_2\not=0, \,  n_1+n_3 \text{ odd}.
  \end{cases}
\end{equation*}
\subsection*{Acknowledgments}

We would like to thank Alex Townsend for suggesting to investigate the convergence of DFS methods on different manifolds.
We gratefully acknowledge the funding by the DFG (STE 571/19-1,
project number 495365311) within SFB F68 (“Tomography Across the Scales”)
as well as by the BMBF under the project “VI-Screen” (13N15754).
\appendix

\section{Appendix}\label{app:proof}

\begin{proof}[Proof \Cref{lem:faaDi}]
This is a generalization of the proof of \cite[lem.~4.1]{MilQue22}
to higher dimensions.
The claim of the lemma clearly holds for $\bm{\beta}=\bm{0}$ and all $k$. If $\bm{\beta}=\bm{e}^p$ for some $p \in [d]$, where $\bm{e}^p$ denotes the $p$-th unit vector, we can apply the chain rule to get
    \begin{equation*}
        \mathrm D^{\bm{\beta}}(g \circ h)= \mathrm D^{\bm{e}^p}(g \circ h)=\sum_{\ell=1}^d \pare{ \mathrm D^{\bm{e}^\ell}g \circ h} \,  \mathrm D^{\bm{e}^p}h_\ell,
    \end{equation*}
    so the claim holds for all $k\in\N$ with 
    \begin{equation*}
        n=d^\prime=\frac{d^\prime!}{(d^\prime-1)!}\leq \frac{(d^\prime+k-1)!}{(d^\prime-1)!}.
    \end{equation*}
    For $k > 1$ and $\abs{\bm{\beta}} >1$, we proceed inductively: Let $\bm{\beta}^+=\bm{\beta}+\bm{e}^p \in B^d_k$ for some $p \in [d]$ and $\bm{\beta}\in B^d_{k-1}$. Assume that
    \begin{equation*}
        \mathrm D^{\bm{\beta}}(g \circ h)=\sum_{i=1}^{n} \pare{\mathrm D^{\bm{\gamma}_i}g \circ h} \, \prod_{j=1}^{m_i} \mathrm D^{\bm{\mu}_{ij}}h_{\ell_{ij}},
    \end{equation*}
    for some constants that satisfy \eqref{eq:indH2} to \eqref{eq:indH6} for $k-1$.
    Since $g$ and $h$ are $k$-times continuously differentiable, their composition is also $k$-times continuously differentiable and, by the product rule, we have 
    \begin{align*}
        \mathrm D^{\bm{\beta}^+}(g \circ h) & =\mathrm D^{\bm{e}^p}\pare{\mathrm D^{\bm{\beta}}(g \circ h)}=\sum_{i=1}^n \mathrm D^{\bm{e}^p} \Big(\pare{\mathrm D^{\bm{\gamma}_i}g \circ h} \, \prod_{j=1}^{m_i} \mathrm D^{\bm{\mu}_{ij}}h_{\ell_{ij}}\Big)\\
        & = \underbrace{\sum_{i=1}^n \mathrm D^{\bm{e}^p} \pare{\mathrm D^{\bm{\gamma}_i}g \circ h}\, \prod_{j=1}^{m_i} \mathrm D^{\bm{\mu}_{ij}}h_{\ell_{ij}}}_{\eqqcolon A}+\underbrace{\sum_{i=1}^n \pare{\mathrm D^{\bm{\gamma}_i}g \circ h} \, \mathrm D^{\bm{e}^p} \Big(\prod_{j=1}^{m_i} \mathrm D^{\bm{\mu}_{ij}}h_{\ell_{ij}}\Big)}_{\eqqcolon B}.
    \end{align*}
    Since by our assumptions $\mathrm D^{\bm{\gamma}_i}g$ and $h$ are continuously differentiable for all $i$, we can apply the chain rule to each summand of $A$ and get
    \begin{equation*}
        A = \sum_{i=1}^n \mathrm D^{\bm{e}^p} \pare{\mathrm D^{\bm{\gamma}_i}g \circ h} \, \prod_{j=1}^{m_i} \mathrm D^{\bm{\mu}_{ij}}h_{\ell_{ij}} =\sum_{i=1}^n \sum_{\ell=1}^{d^\prime} \pare{\mathrm D^{\bm{e}^\ell+\bm{\gamma}_i}g \circ h} \, \mathrm D^{\bm{e}^p} h_\ell \, \prod_{j=1}^{m_i} \mathrm D^{\bm{\mu}_{ij}} h_{\ell_{ij}}.
    \end{equation*}
    Similarly, we know that $D^{\bm{\mu}_{ij}}h_{\ell_{ij}}$ is continuously differentiable for all $i$ and $j$ and thus, we can simplify $B$ by applying the product rule. This yields
    \begin{equation*}
        B = \sum_{i=1}^n \pare{\mathrm D^{\bm{\gamma}_i}g \circ h} \, \mathrm D^{\bm{e}^p}\Big(\prod_{j=1}^{m_i} \mathrm D^{\bm{\mu}_{ij}}h_{\ell_{ij}}\Big)= \sum_{i=1}^n \sum_{r=1}^{m_i} \pare{\mathrm D^{\bm{\gamma}_i}g \circ h} \, \mathrm D^{\bm{e}^p+\bm{\mu}_{ir}}h_{\ell_{ir}} \, \prod_{\substack{j=1\\j \not=r}}^{m_i} \mathrm D^{\bm{\mu}_{ij}}h_{\ell_{ij}}.
    \end{equation*}
    We combine the resulting sums and relabel the constants to obtain
    \begin{equation*}
        \mathrm D^{\bm{\beta}^+}(g \circ h)=\sum_{i=1}^{n^+} \pare{\mathrm D^{\bm{\gamma^+}_i}g\circ h} \, \prod_{j=1}^{m_i^+} \mathrm D^{\bm{\mu}_{ij}^+}h_{\ell_{ij}^+},
    \end{equation*}
    where for all $i^+ \in [n^+]$ and $j^+ \in [m_{i^+}^+]$ there exist $i \in [n]$, $j \in [m_i]$, and $\ell \in [d^\prime]$ such that
    \begin{align*}
        \bm{\gamma}_{i^+}^+=\bm{e}^\ell+\bm{\gamma}_i \text{ or } \bm{\gamma}_{i^+}^+=\bm{\gamma}_i & \underset{\eqref{eq:indH3}}{\implies} \abs{\bm{\gamma}_{i^+}^+} \leq k,\\
        m_{i^+}^+=1+m_i \text{ or } m_{i^+}^+=m_i & \underset{\eqref{eq:indH4}}{\implies} m_{i^+}^+\leq k,\\
        \bm{\mu}_{i^+j^+}^+=\bm{e}^p, \ \bm{\mu}_{i^+j^+}^+=\bm{\mu}_{ij}, \text{ or } \bm{\mu}_{i^+j^+}^+=\bm{e}^p+\mu_{ij} &\underset{\eqref{eq:indH5}}{\implies} \abs{\bm{\mu}_{i^+j^+}^+} \leq k,\\
        \ell_{i^+ j^+}^+=\ell_{ij} \text{ or } \ell_{i^+ j^+}^+=\ell & \underset{\eqref{eq:indH6}}{\implies} \ell_{i^+ j^+}^+ \in [d^\prime],
    \end{align*}
    and by \eqref{eq:indH2}, we have 
    \begin{equation*}
        n^+=\sum_{i=1}^n \Big(\sum_{\ell=1}^{d^\prime} 1+\sum_{r=1}^{m_i}1 \Big)=n\pare{d^\prime+\max_{i=1}^n m_i} \leq n \, (d^\prime+(k-1))\leq \frac{(d^\prime+k-1)!}{(d^\prime-1)!}.
    \end{equation*}
    Thus, we have found constants that satisfy \eqref{eq:indH1} to \eqref{eq:indH6} for $\bm{\beta}^+$ and $k$. 
\end{proof}

\begin{proof}[Proof of \Cref{lem:diadic_bds}]
	We closely follow the proof of \cite[thm.~3.2.16]{Gra08}. We replicate the proof here for the convenience of the reader as \cite{Gra08} does not explicitly provide the constant and also uses different conventions. In particular, they work with $1$-periodic functions and with the Euclidean norm of multi-indices, as opposed to $2\pi$-periodic functions and the $\ell_1$-norm as in our work; this makes some small adjustments in the proof necessary. By the Cauchy–Schwarz inequality, we obtain
	\begin{align*}
		\pare{\sum_{\bm n \in \Z^d, \,2^\ell\le \abs{\bm n} < 2^{\ell+1}}\abs{c_{\bm n}(g)}}^2 \le \pare{\sum_{\bm n \in \Z^d, \,2^\ell\le \abs{\bm n}< 2^{\ell+1}}1^2} \pare{\sum_{\bm n \in \Z^d, \,2^\ell\le \abs{\bm n}< 2^{\ell+1}}\abs{c_{\bm n}(g)}^2}.
	\end{align*} 
	We write $\norm{\bm n}_{\infty} \coloneqq \sup_{j\in [d]} \abs{n_j}$. For the first factor, it holds that
	\begin{equation*}
		\sum_{\substack{\bm n \in \Z^d, \, 2^\ell \le \abs{\bm n} < 2^{\ell+1}}} 1^2\le \#\{\bm{n}\in \Z^d \mid \abs{\bm{n}} < 2^{\ell+1}\} \le \# \{\bm{n} \in \Z^d \mid \norm{\bm{n}}_{\infty} < 2^{\ell+1}\}\le 2^{\ell d+2d}.
	\end{equation*}
	Clearly, the same bound holds when summing over $\{\bm n \in \Z^d \mid 2^\ell \le \norm{\bm n}_{\infty}<2^{\ell+1}\}$.
	
	Next, we show an estimate on the second factor. Let $\bm{n} \in \Z^d$ with $2^\ell \le \abs{\bm n} < 2^{\ell+1}$. We choose $j \in [d]$ such that $\abs{n_j}=\norm{\bm n}_{\infty}$. We thus have $0<\frac{2^\ell}{d} \le \frac{\abs{\bm{n}}}{d} \le \abs{n_j} \le \abs{\bm n} < 2^{\ell+1}$.
	 It is true that $\abs{\e^{\im t}-1}\ge \frac{2}{\pi}\abs{t}$ for all $-\pi\le t\le \pi$. Therefore, we obtain
	\begin{equation}\label{eq:exp_bound}
		\Big\lvert \e^{\im \inn{\bm n, \frac{\pi}{2^{\ell+1}}\bm{e}^j}}-1\Big\rvert=\Big\lvert\e^{\im \pi \frac{ n_j}{2^{\ell+1}}}-1\Big\rvert \ge \frac{2}{\pi} \, \abs{\frac{\pi \, n_j}{2^{\ell+1}}} \ge \frac{1}{d}.
	\end{equation}
	Now, we can bound the second factor as follows
	\begin{align*}
		& \sum_{\bm n \in \Z^d, \, 2^\ell\le \abs{\bm n} < 2^{\ell+1}} \abs{c_{\bm n}\pare{g}}^2 \le \sum_{j=1}^d \sum_{\substack{\bm n \in \Z^d, \, 2^\ell \le \abs{\bm n} < 2^{\ell+1}\\ \abs{n_j}=\norm{\bm n}_{\infty}}} \abs{c_{\bm n}\pare{g}}^2\\
		\centermathcell{\underset{\eqref{eq:exp_bound}}{\le}} & d^2 \, \sum_{j=1}^d \sum_{\substack{\bm n \in \Z^d, \, 2^\ell \le \abs{\bm n}< 2^{\ell+1}\\ \abs{n_j}=\norm{\bm n}_{\infty}}} \Big\lvert \e^{\im \inn{\bm n,\frac{\pi}{2^{\ell+1}}\bm{e}^j}}-1\Big\rvert^2 \, \abs{c_{\bm n}\pare{g}}^2 \, \frac{\abs{n_j}^{2k}}{\abs{n_j}^{2k}}.\\
		\intertext{The factor $d^2$ on the right-hand side can be omitted when substituting $\norm{\bm n}_{\infty}$ for $\abs{\bm n}$ everywhere, since then \eqref{eq:exp_bound} can correspondingly be estimated by $1$. Using the differentiation identity $c_{\bm n}({\mathrm D^{k{\bm e}^j}}g)=(\im \, n_j)^k \, c_{\bm{n}}(g)$, cf.\ \cite[p.~162]{PloPotSteTas18}, and the inequality $\frac{1}{\abs{n_j}}\le \frac{d}{2^\ell}$, we further estimate}
		\centermathcell{\le} & \frac{d^{2k+2}}{2^{2\ell k}} \, \sum_{j=1}^d \sum_{\substack{\bm n \in \Z^d, \, 2^\ell \le \abs{\bm n} < 2^{\ell+1}\\ \abs{n_j}=\norm{\bm n}_{\infty}}} \Big\lvert\e^{\im \inn{\bm n,\frac{\pi}{2^{\ell+1}}\bm{e}^j}}-1\Big\rvert^2 \, \Big\lvert c_{\bm n}\big(\mathrm D^{k\bm{e}^j}g\big)\Big\lvert^2.\\
		\intertext{If we replace $\abs{\bm n}$ by $\norm{\bm n}_{\infty}$, we can again eliminate the powers of $d$ in the estimate. Applying the translation identity $\e^{\im \inn{\bm{n},\bm x}} \, c_{\bm n} (g)=c_{\bm{n}}\big(g(\cdot + {\bm x})\big)$, $\bm x\in\R^d$, cf.\ \cite[p.~162]{PloPotSteTas18}, and the linearity of Fourier coefficients, we obtain}
		\centermathcell{\le} & \frac{d^{2k+2}}{2^{2 \ell k}} \, \sum_{j=1}^d \sum_{\bm n \in \Z^d} \abs{c_{\bm n}\pare{\mathrm D^{k\bm{e}^j}g \pare{\cdot +\frac{\pi}{2^{\ell+1}} \, \bm{e}^j}-\mathrm D^{k \bm{e}^j}g}}^2.\\
		\intertext{Finally, we get by the Parseval identity \cite[thm.~4.5]{PloPotSteTas18}}
		\centermathcell{=} & d^{2k+2} \, 2^{-2 \ell k} \, \sum_{j=1}^d \norm{\mathrm D^{k\bm{e}^j}g \pare{\cdot +\frac{\pi}{2^{\ell+1}} \, \bm{e}^j}-\mathrm D^{k \bm{e}^j}g}_{L_2(\T^d)}^2\\
		\centermathcell{\le} & d^{2k+2} \, 2^{-2 \ell k} \, d \, \sup_{\abs{\bm \beta}=k} \big\lvert \mathrm D^{\bm \beta}g\big\rvert^2_{\mathcal{C}^\alpha(\T^d)} \, \big\lvert \frac{\pi}{2^{\ell+1}}\big\rvert^{2\alpha}\\
	 \centermathcell{\le} & d^{2k+3} \, 2^{-2 \ell k-2\alpha(\ell+1)} \, \pi^{2\alpha} \, \abs{g}_{\mathcal{C}^{k,\alpha}(\T^d)}^2.
	\end{align*}
	Overall, this yields
	\begin{equation*}
		\sum_{\bm n \in \Z^d, \, 2^l \le \abs{\bm n}<2^{\ell+1}} \abs{c_{\bm n}\pare{g}}\le 2^{d-\alpha} \, d^{k+\frac{3}{2}} \, \pi^{\alpha}  \, 2^{\ell\pare{\frac{d}{2}-(k+\alpha)}}\abs{g}_{\mathcal{C}^{k,\alpha}(\T^d)}.
	\end{equation*}
	For rectangular dyadic sums in \Cref{rem:rect}, we analogously obtain
	\begin{equation*}\label{eq:diadic_rect}
		\sum_{\bm n \in \Z^d, \, 2^\ell \le \norm{\bm n}_{\infty}<2^{\ell+1}} \abs{c_{\bm n}\pare{g}}\le 2^{d-\alpha} \, d^{\frac{1}{2}} \, \pi^{\alpha} \, 2^{\ell\pare{\frac{d}{2}-(k+\alpha)}} \abs{g}_{\mathcal{C}^{k,\alpha}(\T^d)}.\qedhere
	\end{equation*}
\end{proof}

\emergencystretch=1em
\printbibliography[heading=bibintoc]
\end{document}